\theoremstyle{definition}
\newtheorem{dfn}{Definition}[section]
\newtheorem{ex}[dfn]{Example}
\newtheorem{rem}[dfn]{Remark}
\theoremstyle{plain}
\newtheorem{prop}[dfn]{Proposition}
\newtheorem{thm}[dfn]{Theorem}
\newtheorem{lem}[dfn]{Lemma}
\newtheorem{cor}[dfn]{Corollary}
\crefname{dfn}{Definition}{Definitions}
\crefname{prop}{Proposition}{Propositions}
\crefname{thm}{Theorem}{Theorems}
\crefname{lem}{Lemma}{Lemmas}
\crefname{cor}{Corollary}{Corollaries}
\crefname{ex}{Example}{Examples}
\crefname{rem}{Remark}{Remarks}
\crefname{conj}{Conjecture}{Conjectures}
\crefname{fact}{Fact}{Facts}
\crefname{lemdef}{Lemma/Definition}{Lemma/Definitions}
\crefname{prob}{Problem}{Problems}
\crefname{figure}{Figure}{Figures}
\crefname{section}{Section}{Sections}
\crefname{appendix}{Appendix}{Appendix}
\Crefname{dfn}{Definition}{Definitions}
\Crefname{prop}{Proposition}{Propositions}
\Crefname{thm}{Theorem}{Theorems}
\Crefname{lem}{Lemma}{Lemmas}
\Crefname{cor}{Corollary}{Corollaries}
\Crefname{ex}{Example}{Examples}
\Crefname{rem}{Remark}{Remarks}
\Crefname{conj}{Conjecture}{Conjectures}
\Crefname{fact}{Fact}{Facts}
\Crefname{lemdef}{Lemma/Definition}{Lemma/Definitions}
\Crefname{prob}{Problem}{Problems}
\Crefname{figure}{Figure}{Figures}
\Crefname{section}{Section}{Sections}
\newcommand\Teich{{Teichm\"uller }}
\newcommand\surf{{F_{g, \vec{\delta}}^p}}
\newcommand\TF{{T(F)}}
\newcommand\wTF{{\widetilde{T}(F)}}
\newcommand\hTF{{\widehat{T}(F)}}
\newcommand\LQ{{\mathcal{L}(F;\mathbb{Q})}}
\newcommand\LR{{\mathcal{L}(F;\mathbb{R})}}
\newcommand\wLQ{{\widetilde{\mathcal{L}}(F;\mathbb{Q})}}
\newcommand\wLR{{\widetilde{\mathcal{L}}(F;\mathbb{R})}}
\newcommand\hLQ{{\widehat{\mathcal{L}}(F;\mathbb{Q})}}
\newcommand\hLR{{\widehat{\mathcal{L}}(F;\mathbb{R})}}
\newcommand\ML{{\mathcal{ML}_{0}^{+}(F)}}
\newcommand\PML{{\mathcal{PML}_{0}(F)}}
\newcommand\wML{{\mathcal{\widetilde{ML}}(F)}}
\newcommand\PSL{{PSL_{2}(\mathbb{R})}}
\newcommand\pos{{ \mathbb{R}_{>0} }}
\newcommand\trop{{ \mathbb{R}^t }}
\newcommand\Trop{{\mathrm{Trop} }}
\newcommand\MC{{MC(F)}}
\newcommand\Arc{{\mathrm{Arc}^{\bowtie}(F)}}
\newcommand\A{{\mathcal{A} }}
\newcommand\X{{\mathcal{X} }}
\newcommand\U{{\mathcal{U} }}
\newcommand\Z{{\mathcal{Z} }}
\newcommand\C{{\mathcal{C} }}
\newcommand\G{{\mathcal{G} }}
\newcommand\F{{\mathcal{F} }}
\newcommand\bS{{\mathbb{S} }}
\newcommand\Pos{{\mathrm{Pos} }}
\newcommand\bi{{\mathbf{i}}}
\begin{document}

\title[On a Nielsen-Thurston theory for cluster modular groups]
{On a Nielsen-Thurston classification theory for cluster modular groups}
\author{Tsukasa Ishibashi}

\address{Graduate School of Mathematical Sciences, the University of Tokyo, 3-8-1 Komaba, Meguro, Tokyo 153-8914, Japan}

\email{ishiba@ms.u-tokyo.ac.jp}
\maketitle

\begin{abstract}
We classify elements of a cluster modular group into three types. We characterize them in terms of fixed point property of the action on the tropical compactifications associated with the corresponding cluster ensemble. The characterization gives an analogue of the Nielsen-Thurston classification theory on the mapping class group of a surface.
\end{abstract}

\setcounter{section}{-1}
\tableofcontents

\setcounter{section}{-1}
\section{Introduction}
A \emph{cluster modular group}, defined in ~\cite{FG09}, is a group associated with a combinatorial data called a \emph{seed}. An element of the cluster modular group is a finite composition of permutations of vertices and \emph{mutations}, which preserves the \emph{exchange matrix} and induces non-trivial ($\A$- and $\X$-)\emph{cluster transformations}. The cluster modular group acts on the \emph{cluster algebra} as automorphisms (only using the $\A$-cluster transformations). A closely related notion of an automorphism group of the cluster algebra, which is called the \emph{cluster automorphism group}, is introduced in ~\cite{ASS} and further investigated by several authors ~\cite{Blanc-Dolgachev,CZ16,CZ16coeff,Lawson16}. Relations between the cluster modular group and the cluster automorphism group are investigated in ~\cite{Fraser}.

It is known that, for each marked hyperbolic surface $F$, the cluster modular group associated with the seed associated with an ideal triangulation of $F$ includes the \emph{mapping class group} of $F$ as a subgroup of finite index ~\cite{BS15}. Therefore it seems natural to ask whether a property known for mapping class groups holds for general cluster modular groups. In this paper we attempt to provide an analogue of the \emph{Nielsen-Thurston theory} ~\cite{Thurston,FLP} on mapping class groups, which classifies mapping classes into three types in terms of fixed point property of the action on the \emph{Thurston compactification} of the \Teich space. Not only is this an attempt at generalization, but also it is expected to help deepen understanding of mapping classes as cluster transformations. A problem, which is equivalent to classifying mapping classes in terms of the cluster transformations, was originally raised in ~\cite{PP93}. 

The \emph{cluster ensemble} associated with a seed, defined in ~\cite{FG09}, plays a similar role as the \Teich space when we study cluster modular groups. It can be thought of two spaces on which the cluster modular group acts. Technically, it consists of two functors $\psi_\A$, $\psi_\X: \G \to \Pos(\mathbb{R})$, called $\A$- and $\X$-spaces respectively, which are related by a natural transformation $p: \psi_\A \to \psi_\X$. Here the objects of the target category are split algebraic tori over $\mathbb{R}$, and the values of these functors patch together to form a pair of contractible manifolds $\A(\pos)$ and $\X(\pos)$, on which the cluster modular group acts analytically. These manifolds are naturally compactified to a pair of topological closed disks $\overline{\A}=\A(\pos) \sqcup P\A(\trop)$ and $\overline{\X}=\X(\pos) \sqcup P\X(\trop)$, called the \emph{tropical compactifications} ~\cite{FG16,Le16}, on which the actions of the cluster modular group extend continuously. These are algebraic generalizations of the \emph{Thurston compactifications} of \Teich spaces. In the case of the seed associated with a triangulated surface, $\U(\pos)=p(\A(\pos))$ is identified with the \Teich space, $\A(\pos)$ and $\X(\pos)$ are the \emph{decorated \Teich space} and the \emph{enhanced \Teich space} introduced by Penner ~\cite{Penner87} and Fock-Goncharov ~\cite{FG07}, respectively. The tropical compactification $\overline{\U}$ is identified with the Thurston compactification of the \Teich space ~\cite{FG16}. For an investigation of the action of the cluster modular group on $\U(\mathbb{Z}^t)$, see ~\cite{Mandel14}.

For each seed, a simplicial complex called the \emph{cluster complex}, defined in ~\cite{FZ03} and ~\cite{FG09}, admits a simplicial action of the cluster modular group. In the case of the seed associated with an ideal triangulation of a surface $F$, the cluster complex is a finite covering of the \emph{arc complex} of $F$.
In terms of the action on the cluster complex, we define three types of elements of the cluster modular group, called \emph{Nielsen-Thurston types}. They constitute an analogue of the classification of mapping classes.

\begin{dfn}[Nielsen-Thurston types: \cref{dfn; NT types}]
Let $\bi$ be a seed, $\C=\C_{|\bi|}$ the corresponding cluster complex and $\Gamma=\Gamma_{|\bi|}$ the corresponding cluster modular group. An element $\phi \in \Gamma$ is said to be
\begin{enumerate}
\item periodic if $\phi$ has finite order,
\item cluster-reducible if $\phi$ has a fixed point in the geometric realization $|\C|$ of the cluster complex, and
\item cluster-pseudo-Anosov (cluster-pA) if no power of $\phi$ is cluster-reducible.
\end{enumerate}
\end{dfn}
These types give a classification of elements of the cluster modular group in the sense that the cyclic group generated by any element intersects with at least one of these types. 
We have the following analogue of the classical Nielsen-Thurston theory for general cluster modular groups, which is the main theorem of this paper.

\begin{thm}[\cref{thm; main thm}]\label{thm: cluster modular}
Let $\bi$ be a seed of \Teich type (see \cref{dfn; Teich type}) and $\phi \in \Gamma_{|\bi|}$ an element. Then the followings hold.
\begin{enumerate}
\item The element $\phi \in \Gamma$ is periodic if and only if it has fixed points in $\A(\pos)$ and $\X(\pos)$.
\item The element $\phi \in \Gamma$ is cluster-reducible if and only if there exists a point $L \in \X(\trop)_+$ such that $\phi[L]=[L]$.
\item If the element $\phi \in \Gamma$ is cluster-pA, there exists a point $L \in \X(\trop) \backslash \X(\trop)_+$ such that $\phi[L]=[L]$.
\end{enumerate}
\end{thm}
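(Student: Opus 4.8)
The plan is to prove the three items largely independently, deducing (3) from (1), (2) and Brouwer's fixed point theorem applied to the closed disk $\overline{\X}=\X(\pos)\sqcup P\X(\trop)$. Item (1) is a fixed point statement for the finite cyclic group generated by a periodic $\phi$, together with finiteness of point stabilizers; item (2) transports the defining condition ``$\phi$ fixes a point of $|\C|$'' along a $\Gamma$-equivariant identification of $|\C|$ with the positive part $P\X(\trop)_+$ of the tropical boundary; and item (3) rules out, for a cluster-pA $\phi$, that the Brouwer fixed point lies in $\X(\pos)$ or in $P\X(\trop)_+$.

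For (1): the forward implication is a fixed point theorem. A periodic $\phi$ generates a finite cyclic group $\langle\phi\rangle$ acting on the contractible manifolds $\A(\pos)$ and $\X(\pos)$. For a seed of \Teich type these carry a $\Gamma$-invariant Riemannian metric of non-positive curvature built from the Weil--Petersson geometry of $\U(\pos)=T(F)$; along the remaining fibre directions of $\A(\pos)$ over $T(F)$, which record horocycle decorations, $\phi$ acts by a finite-order monomial map, which has a fixed point by averaging in logarithmic coordinates. A $\langle\phi\rangle$-orbit is finite, hence bounded, so it has a circumcenter, necessarily fixed by $\phi$; geodesic convexity of $T(F)$ inside the Weil--Petersson completion keeps the circumcenter in the interior, producing fixed points in both $\A(\pos)$ and $\X(\pos)$. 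For the converse, a $\phi$ fixing a point of $\A(\pos)$ (resp. $\X(\pos)$) lies in its stabilizer, which is finite because the $\Gamma$-action is properly discontinuous --- for \Teich type this is inherited from proper discontinuity of $\MC$ on $T(F)$ via $[\Gamma:\MC]<\infty$ --- so $\phi$ has finite order. In particular this yields the two separate equivalences (periodic $\Leftrightarrow$ a fixed point in $\A(\pos)$) and (periodic $\Leftrightarrow$ a fixed point in $\X(\pos)$), the latter of which is used in (3).

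For (2): by the $\Gamma$-equivariant identification of $|\C|$ with $P\X(\trop)_+$ --- the projectivized support of the cluster fan obtained by tropicalizing the $\X$-coordinates and the mutation rule (following \cite{FG16,Le16}), under which the tropical $\Gamma$-action corresponds to the simplicial one on $|\C|$ --- the conditions ``$\phi$ has a fixed point in $|\C|$'' and ``$\phi[L]=[L]$ for some $L\in\X(\trop)_+$'' are one and the same: a fixed point $x\in|\C|$ determines an $L\in\X(\trop)_+$ spanning the ray $x$, and conversely. Hence (2) is immediate once this identification is in hand.

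For (3): suppose $\phi$ is cluster-pA. The $\Gamma$-action extends continuously to the closed disk $\overline{\X}$, so Brouwer's theorem gives $z\in\overline{\X}$ with $\phi(z)=z$. If $z\in\X(\pos)$, then $\phi$ has a fixed point in $\X(\pos)$, so by (1) it is periodic, whence $\phi^n=\mathrm{id}$ for some $n\ge 1$; but $\phi^n$ fixes every point of $|\C|$ and is therefore cluster-reducible, contradicting cluster-pA. Hence $z=[L]$ with $L\in\X(\trop)$ and $\phi[L]=[L]$. If $L\in\X(\trop)_+$, then by (2) $\phi$ is cluster-reducible, again a contradiction. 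Therefore $L\in\X(\trop)\setminus\X(\trop)_+$, which is the assertion of (3). The step I expect to be the main obstacle is the geometric input to (1): exhibiting a $\Gamma$-invariant metric of non-positive curvature on $\A(\pos)$ and $\X(\pos)$ whose completion is convex enough for the circumcenter argument, and proving proper discontinuity of the action, for seeds of \Teich type --- together with pinning down the cluster-fan identification $|\C|\cong P\X(\trop)_+$ precisely enough to use as a black box. Granting these, the deduction of all three parts is formal.
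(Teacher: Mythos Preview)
Your treatment of (2) and (3) is essentially the paper's argument: the paper constructs a $\Gamma$-equivariant surjection $\Psi:|\C|\to P\X(\trop)_+$ and shows it is a bijection under condition (T2), then deduces (3) exactly as you do, via Brouwer on $\overline{\X}$ together with (1) and (2).

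The gap is in (1). You have misread the hypothesis: ``\Teich type'' in this paper is \emph{not} ``associated with a triangulated surface''; it is the purely combinatorial pair of conditions (T1) (a growth condition on cluster coordinates) and (T2) (an equivalence condition on non-negative clusters). The class includes finite-type seeds and the rank-$2$ seeds $L_k$, where there is no $T(F)$, no Weil--Petersson metric, no horocycle decoration, and no $[\Gamma:\MC]<\infty$ statement to invoke. So neither your forward argument (circumcenter in a non-positively curved metric built from Weil--Petersson geometry) nor your converse argument (finite stabilizers via proper discontinuity inherited from $\MC$ acting on $T(F)$) applies in the stated generality. Even restricted to surface seeds your forward argument is delicate: the Weil--Petersson metric is incomplete, and you would need the circumcenter to land in the interior rather than on a stratum of the completion.

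The paper's proofs of (1) are quite different and are worth knowing. For the forward direction (which in fact needs no hypothesis on the seed), one applies Brouwer to the closed disk $\overline{\Z}$; if $\phi$ had no interior fixed point, collapsing $\partial\overline{\Z}$ yields a finite-order homeomorphism of $S^N$ with a single fixed point, and Brown's theorem $\mathrm{Lef}(\phi)=\chi(\mathrm{Fix}(\phi))$ gives the parity contradiction (even $=1$). For the converse, one uses (T1) directly: if $\phi$ has infinite order, then for any $g$ the coordinates $|\log Z_\alpha(\phi^m g)|$ eventually exceed any bound, so every orbit leaves every compact set and there can be no fixed point in $\Z(\pos)$. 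Proper discontinuity does imply (T1) (this is recorded in the paper), but the paper does not assume it, and for $L_k$ it verifies (T1) by hand.
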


We will show that the seeds of \Teich type include seeds of finite type, the seeds associated with triangulated surfaces, and the rank $2$ seeds of finite mutation type.

In the theorem above, we neither characterize cluster-pA elements in terms of fixed point property, nor describe the asymptotic behavior of the orbits as we do in the original Nielsen-Thurston classification (see \cref{classical NT}). However we can show the following asymptotic behavior of orbits similar to that of pA classes in the mapping class groups, for certain classes of cluster-pA elements.

\begin{thm}[cluster reductions and cluster Dehn twists: \cref{thm; cluster Dehn twists}]\label{thm; generalized Dehn}\ {}
\begin{enumerate}
\item Let $\bi$ be a seed, $\phi \in \Gamma_{|\bi|}$ be a cluster-reducible element. Then some power $\phi^l$ induces a new element in the cluster modular group associated with a seed which has smaller mutable rank $n$. We call this process the \emph{cluster reduction}. 
\item After a finite number of cluster reductions, the element $\phi^l$ induces a cluster-pA element.
\item Let $\bi$ be a skew-symmetric connected seed which has mutable rank $n \geq 3$, $\phi \in \Gamma_{|\bi|}$ an element of infinite order.
If some power of the element $\phi$ is cluster-reducible to rank $2$, then there exists a point $[G] \in P\A(\trop)$ such that we have
\[
\lim_{n \to \infty}\phi^{\pm n}(g)=[G] \text{  in $\overline{\A}$}
\]
for all $g\in \A(\pos)$.
\end{enumerate}
\end{thm}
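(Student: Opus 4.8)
The plan is to prove parts (1) and (2) first, since they furnish the cluster reduction operation needed in (3). For (1): a point of $|\C|$ fixed by $\phi$ has a well-defined carrier, the unique minimal simplex $\sigma$ containing it, and a simplicial automorphism sends the carrier of a fixed point to itself, so $\phi(\sigma)=\sigma$ and $\phi$ permutes the vertices of $\sigma$; taking $l$ to be the order of that permutation, $\phi^l$ fixes every vertex of $\sigma$. Since the vertices of $\C$ are (lifts of) cluster variables and a simplex records a collection of compatible ones, freezing the cluster variables attached to the vertices of $\sigma$ produces a seed $\bi'$ with $\dim\sigma+1$ fewer mutable vertices; the point to check is that $\mathrm{lk}_\C(\sigma)$ is canonically $\Gamma_{|\bi'|}$-equivariantly isomorphic to $\C_{|\bi'|}$, after which $\phi^l$, fixing $\sigma$ pointwise, restricts to an action on $\mathrm{lk}_\C(\sigma)$ realized by an element of $\Gamma_{|\bi'|}$ (possibly the identity), i.e.\ the cluster reduction of $\phi^l$. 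For (2): the mutable rank is a nonnegative integer that drops under each cluster reduction, so the process terminates; once no reduction is possible, no power of the resulting element has a fixed point in its cluster complex, which is the definition of cluster-pA.

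For (3), combine (1) and (2): after replacing $\phi$ by a power we may assume $\phi$ fixes pointwise a simplex $\tau\subset\C$ on $n-2$ vertices with $\mathrm{lk}_\C(\tau)\cong\C_{|\bi''|}$ for a rank-$2$ seed $\bi''$, inducing $\bar\phi\in\Gamma_{|\bi''|}$. Freezing preserves skew-symmetry, so the exchange matrix of $\bi''$ is $\left(\begin{smallmatrix}0&b\\-b&0\end{smallmatrix}\right)$, and because $\phi$ has infinite order $\bar\phi$ is not periodic, which forces $b\ge 2$ and identifies $\bar\phi$, up to conjugacy, with a nonzero power of the cluster Dehn twist $\mathbf t$ (mutation at one mutable vertex composed with the transposition of the two mutable vertices). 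I would then compute the tropicalized action: in the adapted cluster chart, with $(X,Y)$ the two mutable tropical coordinates and the $n-2$ frozen coordinates left fixed, one gets the piecewise-linear map $\Trop(\mathbf t)\colon(X,Y)\mapsto(Y,\max(0,bY)-X)$, which on $\{Y\ge 0\}$ is the linear map $\left(\begin{smallmatrix}0&1\\-1&b\end{smallmatrix}\right)$ and on $\{Y\le 0\}$ is the rotation $(X,Y)\mapsto(Y,-X)$. For $b=2$ the matrix is parabolic with unique fixed ray $\mathbb{R}_{\ge 0}(1,1)$, interior to $\{Y>0\}$; one checks that every ray enters $\{Y>0\}$ after finitely many iterations and is then pushed by the parabolic shear, under both forward and backward iteration, to the fixed ray. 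This produces a single boundary point $[G'']$ with $\bar\phi^{\pm N}(g'')\to[G'']$ for every point $g''$ of the positive $\A$-space of $\bi''$. (For $b\ge 3$ the matrix is hyperbolic, with distinct attracting and repelling directions; but this case does not arise when $\bi$ is of finite mutation type, in particular of Teichm\"uller type, since the exchange-matrix entries in a rank-$\ge 3$ mutation class are then bounded by $2$, forcing $b=2$ — the affine Kronecker, cluster-Dehn-twist situation.)

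It remains to transport this back to the original seed. Since $\phi^l$ fixes the $n-2$ frozen variables, on $\A(\pos)$ it preserves the splitting of coordinates into the $n-2$ frozen directions and the $2$ mutable directions, acting on the latter as the rank-$2$ twist with the frozen values serving as coefficients. For any $g\in\A(\pos)$ the frozen coordinates of $\phi^{lk}(g)$ stay fixed while the mutable coordinates run to infinity in the direction $(1,1)$, so upon rescaling the frozen directions contribute $0$ and $\phi^{lk}(g)$ converges in $\overline{\A}$, both as $k\to+\infty$ and as $k\to-\infty$, to the point $[G]\in P\A(\trop)$ that extends $[G'']$ by zeros along the frozen directions (here one uses the natural identification of the underlying positive spaces of $\bi$ and $\bi''$ to transport $[G'']$). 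The powers not divisible by $l$ are handled by $\phi^{lk+j}(g)=\phi^{lk}(\phi^j(g))$ together with the fact that the convergence above holds for every starting point of $\A(\pos)$; hence $\phi^m(g)\to[G]$ as $m\to\pm\infty$ for all $g\in\A(\pos)$.

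The principal obstacle is twofold. First, making the cluster reduction precise: identifying $\mathrm{lk}_\C(\sigma)$ with $\C_{|\bi'|}$ in a $\Gamma$-equivariant way and verifying that the restriction of $\phi^l$ to the link is a genuine element of $\Gamma_{|\bi'|}$ — that it fixes the exchange matrix of $\bi'$ up to relabeling and induces an honest cluster transformation — rather than merely a word of mutations and permutations. Second, the global piecewise-linear dynamics in (3): one must control the transient ``rotation'' region $\{Y\le 0\}$ before the parabolic regime takes over, and then confirm that the limiting ray corresponds to a bona fide point of the tropical boundary $P\A(\trop)$ of the original rank-$n$ seed, not just of $\bi''$. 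The remaining ingredients — termination of the reductions in (2), the fibred description of the action in the lift, and the semigroup argument — are routine.
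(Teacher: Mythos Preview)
Your treatment of (1) and (2) is essentially the paper's: a cluster-reducible element permutes the vertices of the minimal simplex carrying its fixed point, a suitable power fixes them (``proper reducible''), freezing those vertices identifies the link with the cluster complex of the reduced seed, and the power descends to that reduced cluster modular group; iterating drops the mutable rank until no further reduction is possible.

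For (3) there are two genuine gaps.

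First, your argument that the rank-$2$ reduction has $b=|\epsilon_{01}|=2$ is not valid under the stated hypotheses. You invoke finite mutation type (and conflate it with ``Teichm\"uller type'', which in this paper denotes the unrelated pair of conditions (T1)--(T2)), but the theorem assumes only that $\bi$ is skew-symmetric, \emph{connected}, of mutable rank $n\ge 3$. The paper extracts $b=2$ from connectedness: after freezing $n-2$ vertices there is a frozen vertex $i$ adjacent to one of the two mutable ones, say $a:=\epsilon_{i0}\ne 0$; since the reduced generator $(0\ 1)\circ\mu_0$ must preserve the full exchange matrix including the frozen rows, one reads off $\epsilon_{1i}=a$ and then, from the mutation rule on the $(i,1)$-entry, $a-ak=-a$, hence $k=2$. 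Without this step $b\ge 3$ is not excluded and your parabolic picture collapses.

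Second, you analyse the \emph{tropical} piecewise-linear map on $\A(\trop)$, whereas the convergence to be proved is in $\overline{\A}=\A(\pos)\sqcup P\A(\trop)$: for $g\in\A(\pos)$ one must show that the genuine log-coordinates $\log A_i(\psi^m g)$ diverge and projectively approach the ray $(1,1,0,\dots,0)$. The PL recursion $(X,Y)\mapsto(Y,\max(0,2Y)-X)$ is only the $\epsilon\to 0$ limit of the true recursion $a_1^{(m)}=-a_0^{(m-1)}+\log\bigl(C+e^{2a_1^{(m-1)}}\bigr)$; knowing the PL map attracts every ray to $\mathbb{R}_{\ge 0}(1,1)$ does not, on its own, show the orbit leaves every compact set of $\A(\pos)$, nor that the error $\log(C+e^{2y})-\max(\log C,2y)$ is controllable along it. The paper instead analyses the log-recursion directly and proves $a_0^{(m)},a_1^{(m)}\to\infty$ with $a_0^{(m)}/a_1^{(m)}\to 1$. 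Your transport-back step (frozen coordinates fixed, hence projectively negligible) and the semigroup argument for powers not divisible by $l$ are fine once this is established.
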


We call a mapping class which satisfies the assumption of \cref{thm; generalized Dehn}(3) \emph{cluster Dehn twist}. Dehn twists in the mapping class groups are cluster Dehn twists. The above theorem says that cluster Dehn twists have the same asymptotic behavior of orbits on $\overline{\A}$ as Dehn twists. We expect that cluster Dehn twists together with seed isomorphisms generate cluster modular groups, as Dehn twists do in the case of mapping class groups. The generation of cluster modular groups by cluster Dehn twists and seed isomorphisms will be discussed elsewhere.

This paper is organized as follows. In \cref{section: definition}, we recall some basic definitions from ~\cite{FG09}. Here we adopt slightly different treatment of the frozen vertices and definition of the cluster complex from those of ~\cite{FZ03,FG09}. In \cref{section: NT types}, we define the Nielsen-Thurston types for elements of cluster modular groups and study the fixed point property of the actions on the tropical compactifications. Our basic examples are the seeds associated with triangulated surfaces, studied in \cref{section: Teich}. Most of the contents of this section seem to be well-known to specialists, but they are scattered in literature. Therefore we tried to gather results and give a precise description of these seeds. 
Other examples are studied in \cref{examples}.

\bigskip 

\noindent \textbf{Acknowledgement.}
I would like to express my gratitude to my advisor, Nariya Kawazumi, for helpful guidance and careful instruction. Also I would like to thank Toshiyuki Akita, Vladimir Fock, Rinat Kashaev, and Ken'ichi Ohshika for valuable advice and discussion. This work is partially supported by the program for Leading Graduate School, MEXT, Japan.

\section{Definition of the cluster modular groups}\label{section: definition}

\subsection{The cluster modular groups and the cluster ensembles}

We collect here the basic definitions on cluster ensembles and cluster modular groups. This section is based on Fock-Goncharov's seminal paper ~\cite{FG09}, while the treatment of frozen variables here is slightly different from them. In particular, the dimensions of the $\A$- and $\X$-spaces equal to the rank and the mutable rank of the seed, respectively. See \cref{def: ensembles}.

\begin{dfn}[seeds]
A \emph{seed} consists of the following data ${\bi} =(I, I_0, \epsilon, d)$;
\begin{enumerate}
\item $I$ is a finite set and $I_0$ is a subset of $I$ called the \emph{frozen subset}. An element of $I-I_0$ is called a \emph{mutable vertex}.
\item $\epsilon=(\epsilon_{ij})$ is a $\mathbb{Q}$-valued function on $I \times I$ such that $\epsilon_{ij} \in \mathbb{Z}$ for $(i, j) \notin I_0 \times I_0$, which is called the \emph{exchange matrix}.
\item $d = (d_i) \in \mathbb{Z}_{>0}^{I}$ such that $\mathrm{gcd}(d_i)=1$ and the matrix $\hat{\epsilon}_{ij}:= \epsilon_{ij}d_j$ is skew-symmetric.
\end{enumerate}
The seed $\bi$ is said to be \emph{skew-symmetric} if $d_i=1$ for all $i \in I$. In this case the exchange matrix $\epsilon$ is a skew-symmetric matrix. We simply write ${\bi} =(I, I_0, \epsilon)$ if $\bi$ is skew-symmetric. We call the numbers $N:=|I|$, $n:=|I-I_0|$ the \emph{rank} and the \emph{mutable rank} of the seed $\bi$, respectively. 
\end{dfn}
\begin{rem}
Note that unlike Fomin-Zelevinsky's definition of seeds (e.g. ~\cite{FZ03}), our definition does not include the notion of \emph{cluster variables}. A corresponding notion, which we call the \emph{cluster coordinate}, is given in \cref{seed tori} below. 
\end{rem}
Skew-symmetric seeds are in one-to-one correspondence with quivers without loops  and 2-cycles. Here a loop is an arrow whose endpoints are the same vertex, and a 2-cycle is a pair of arrows sharing both endpoints and having different orientations. Given a skew-symmetric seed ${\bi} =(I, I_0, \epsilon)$, the corresponding quiver is given by setting the set of vertices $I$, and drawing $|\epsilon_{ij}|$ arrows from the vertex $i$ to the vertex $j$ (resp. $j$ to $i$) if $\epsilon_{ij}>0$ (resp.  $\epsilon_{ij}<0$).

\begin{dfn}[seed mutations]
For a seed ${\bi} =(I, I_0, \epsilon, d)$ and a vertex $k \in I - I_0$, we define a new seed  ${\bi'} =(I', I_0', \epsilon', d')$ as follows:
\begin{itemize}
\item $I':=I, I_0':= I_0, d':=d$,
\item $\epsilon'_{ij}:= 
\begin{cases}
-\epsilon_{ij} & \text{if $k \in \{ i, j\}$}, \vspace{2mm} \\
\epsilon_{ij} + \displaystyle \frac{|\epsilon_{ik}|\epsilon_{kj}+  \epsilon_{ik}|\epsilon_{kj}|}{2} & \text{ otherwise}.
\end{cases}$
\end{itemize}
We write $\bi' = \mu_k(\bi)$ and refer to this transformation of seeds as the \emph{mutation directed to the vertex $k$}.
\end{dfn}

Next we associate \emph{cluster transformation} with each seed mutation. For a field $k$, let $k^*$ denote the multiplicative group. Our main interest is the case $k=\mathbb{R}$. A direct product $(k^*)^n$ is called a \emph{split algebraic torus} over $k$.
\begin{dfn}[seed tori]\label{seed tori}
Let $\bi=(I,I_0,\epsilon,d)$ be a seed and $\Lambda:= \mathbb{Z}[I ]$, $\Lambda':=\mathbb{Z}[I-I_0]$ be the lattices generated by $I$ and $I-I_0$, respectively.
\begin{enumerate}
\item $\X_{\bi}(k):= \mathrm{Hom}_\mathbb{Z}( \Lambda', k^*)$ is called the \emph{seed $\X$-torus} associated with $\bi$. For $i \in I - I_0$, the character $X_i: \X_{\bi} \to k^*$ defined by $\phi \mapsto \phi(e_i)$ is called the \emph{cluster $\X$-coordinate}, where $(e_i)$ denotes the natural basis of $\Lambda'$.
\item Let $f_i:=d_i^{-1} e_i^* \in \Lambda^*\otimes_{\mathbb{Z}}\mathbb{Q}$ and $\Lambda^\circ:=\oplus_{i \in I}\mathbb{Z}f_i \subset  \Lambda^*\otimes_{\mathbb{Z}}\mathbb{Q}$ another lattice, where $\Lambda^*$ denotes the dual lattice of $\Lambda$ and $(e_i^*)$ denotes the dual basis of $(e_i)$.
Then $\A_{\bi}(k):= \mathrm{Hom}_\mathbb{Z}( \Lambda^\circ, k^*)$ is called the \emph{seed $\A$-torus} associated with $\bi$. For $i \in I $, the character $A_i: \A_{\bi} \to k^*$ defined by $\psi \mapsto \psi(f_i)$ is called the \emph{cluster $\A$-coordinate}. 
The coordinates $A_i$ ($i \in I_0$) are called \emph{frozen variables}.
\end{enumerate}
\end{dfn}
Note that $\X_\bi(k) = (k^*)^n$ and $\A_\bi(k) = (k^*)^N$ as split algebraic tori. These two tori are related as follows. Let $p^*: \Lambda' \to \Lambda^\circ$ be the linear map defined by \[
p^*(v) = \sum_{\substack{i \in I-I_0 \\ k \in I}}  v_i \epsilon_{ik} f_k
\]
for $v=\sum_{i \in I-I_0} v_i e_i \in \Lambda'$. By taking $\mathrm{Hom}_\mathbb{Z}( -, k^*)$, it induces a monomial map $p_{\bi}: 
\A_{\bi} \to \X_{\bi}$, which is represented in cluster coordinates as $p_{\bi}^* X_i = \prod_{k \in I} A_k^{\epsilon_{ik}}$.

\begin{rem}
Note that we assign cluster $\X$-coordinates only on mutable vertices, which is a different convention from that of ~\cite{FG09}. It seems to be natural to adopt our convention from the point of view of the \Teich theory (see \cref{section: Teich}).
\end{rem}

\begin{dfn}[cluster transformations]\label{cluster transf}
For a mutation $\mu_k:\bi \to \bi'$, we define transformations on seed tori called the \emph{cluster transformations} as follows;

\begin{enumerate}
\item $\mu_k^{x}: \X_{\bi} \to \X_{\bi'}$, \\
$(\mu_k^{x})^* X_i':= 
\begin{cases}
X_k^{-1} & \text{if $i=k$}, \\
X_i(1+ X_k^{\mathrm{sgn}\epsilon_{ki}})^{\epsilon_{ki}} & \text{otherwise},
\end{cases}$
\item $\mu_k^{a}: \A_{\bi} \to \A_{\bi'}$, \\
$(\mu_k^{a})^* A_i':= 
\begin{cases}
A_i^{-1}(\prod_{\epsilon_{kj}>0} A_j^{\epsilon_{kj}} +
 \prod_{\epsilon_{kj}<0} A_j^{-\epsilon_{kj}}) & \text{if $i=k$}, \\
A_i & \text{otherwise}.
\end{cases}$
\end{enumerate}
\end{dfn}
Note that the frozen $\A$-variables are not transformed by mutations, while they have an influence on the transformations of the mutable $\A$-variables.

\begin{dfn}[the cluster modular group]
Let $\bi=(I,I_0,\epsilon,d)$ be a seed. Recall that a \emph{groupoid} is a small category whose morphisms are all invertible.
\begin{enumerate}
\item A \emph{seed isomorphism} is a permutation $\sigma$ of $I$ such that $\sigma(i)=i$ for all $i \in I_0$ and $\epsilon_{\sigma(i) \sigma(j)}=\epsilon_{ij}$ for all $i,j \in I$.
A \emph{seed cluster transformation} is a finite composition of mutations and seed isomorphisms. A seed cluster transformation is said to be \emph{trivial} if the induced cluster $\A$- and $\X$- transformations are both identity. Two seeds are called \emph{equivalent} if they are connected by a seed cluster transformation. Let $|\bi|$ denote the equivalence class containing the seed $\bi$.
\item Let $\G_{|\bi|}$ be the groupoid whose objects are seeds in $|\bi|$, and morphisms are seed cluster transformations, modulo trivial ones.
The automorphism group $\Gamma= \Gamma_{|\bi|}:= \mathrm{Aut}_{\G_{|\bi|}}(\bi)$ is called the \emph{cluster modular group} associated with the seed $\bi$. We call elements of the cluster modular group \emph{mapping classes} in analogy with the case in which the seed is coming from an ideal triangulation of a surface (see \cref{section: Teich}).
\end{enumerate}
\end{dfn}

\begin{ex}\label{example: cluster modular group}
We give some examples of cluster modular groups.
\begin{enumerate}
\item (Type $A_2$). Let ${\bi}:= (\{0,1\}, \emptyset, \epsilon)$ be the skew-symmetric seed defined by $\epsilon:= \begin{pmatrix}
0 & 1 \\ -1 & 0
 \end{pmatrix}$, which is called \emph{type $A_2$}. 
Let $\phi:=(0\ 1)\circ \mu_0 \in \Gamma_{A_2}$. It is the generator of the cluster modular group. The associated cluster transformations are described as follows:
\begin{align*}
\phi^*(A_0, A_1)&=\left(A_1, \frac{1+A_1}{A_0}\right), \\
\phi^*(X_0, X_1)&=(X_1(1+X_0), X_0^{-1}).
\end{align*}
Then one can check that $\phi$ has order $5$ by a direct calculation. See ~\cite{FG09}Section 2.5 for instance. In particular we have $\Gamma_{A_2} \cong \mathbb{Z}\slash 5$. 

\item (Type $L_k$ for $k \geq 2$). For an integer $k \geq 2$, let ${\bi}_k:= (\{0,1\}, \emptyset, \epsilon_k)$ be the skew-symmetric seed defined by $\epsilon_k:= \begin{pmatrix}
0 & k \\ -k & 0
 \end{pmatrix}$. Let us refer to this seed as the \emph{type $L_k$}. The quiver associated with the seed ${\bi}_k$ is shown in \cref{fig; Lk}.
Let $\phi:=(0\ 1)\circ \mu_0 \in \Gamma_{L_k}$. It is the generator of the cluster modular group. In this case, the associated cluster transformations are described as follows:
\begin{align*}
\phi^*(A_0, A_1)&=\left(A_1, \frac{1+A_1^k}{A_0}\right), \\
\phi^*(X_0, X_1)&=(X_1(1+X_0)^k, X_0^{-1}).
\end{align*}
It turns out that in this case the element $\phi$ has infinite order ~\cite{FZ03}. See \cref{example: periodic}.

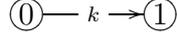
\begin{figure}
\[
\begin{xy}0;<1pt,0pt>:<0pt,-1pt>:: 
(50,0) *+[o][F]{0} ="0",
(100,0) *+[o][F]{1} ="1",
"0", {\ar|*+{\scriptstyle k}"1"},
\end{xy}
\]
\caption{quiver $L_k$}
\label{fig; Lk}
\end{figure}
\end{enumerate}
\end{ex}

Next we define the concept of a \emph{cluster ensemble}, which is defined to be a pair of functors related by a natural transformation. A cluster ensemble, in particular,  produces a pair of real-analytic manifolds, on which the cluster modular group acts analytically.

Let us recall some basic concepts from algebraic geometry.  For a split algebraic torus $H$, let $X_1,\dots,X_n$ be its coordinates. A rational function $f$ on $H$ is said to be  \emph{positive} if it can be represented as $f=f_1/f_2$, where $f_i=\sum_{\alpha \in \mathbb{N}^{n}} a_\alpha X^\alpha$ and $a_\alpha \in \mathbb{Z}_{\geq0}$. Here we write $X^\alpha:=X_1^{\alpha_1}\dots X_n^{\alpha_n}$ for a multi-index $\alpha \in \mathbb{N}^n$. Note that the set of positive rational functions on a split algebraic torus form a semifield under the usual operations. A rational map between two split algebraic tori $f: H_1 \to H_2$ is said to be \emph{positive} if the induced map $f^*$ preserves the semifields of positive rational functions.

\begin{dfn}[positive spaces]\ {}
\begin{enumerate}
\item Let $\Pos(k)$ be the category whose objects are split algebraic tori over $k$ and morphisms are positive rational maps. A functor $\psi: \G \to \Pos(k)$ from a groupoid $\G$ is called a \emph{positive space}.
\item A \emph{morphism} $\psi_1 \to \psi_2 $ between two positive spaces $ \psi_i: \G _i \to \Pos(k)$ (i=1,2) consists of the data $(\iota, p)$, where $\iota: \G_1 \to \G_2$ is a functor and $p: \psi_1 \Rightarrow \psi_2 \circ \iota$ is a natural transformation. A morphism of positive spaces $(\iota, p): \psi_1 \to \psi_2$ is said to be \emph{monomial} if the map between split algebraic tori $p_\alpha: \psi_1(\alpha) \to \psi_2(\iota(\alpha))$ preserves the set of monomials for each object $\alpha \in \G_1$.
\end{enumerate}
\end{dfn}

\begin{dfn}[cluster ensembles]\label{def: ensembles}\ {}
\begin{enumerate}
\item From \cref{cluster transf} we get a pair of positive spaces $\psi_{\X}, \psi_{\A}: \G_{|\bi|} \to \Pos(k)$, and we have a monomial morphism $p=p_{|\bi|}: \psi_{\A} \to \psi_{\X}$ (with $\iota=\mathrm{id}$), given by $p_{\bi}^* X_i = \prod_{k \in I} A_k^{\epsilon_{ik}}$ on each seed $\A$- and $\X$-tori. We call these data the \emph{cluster ensemble} associated with the seed $\bi$, and simply write as $p: \A \to \X$. The groupoid $\G=\G_{|\bi|}$ is called the \emph{coordinate groupoid} of the cluster ensemble. 
\item Let $\U=p(\A)$ be the positive space obtained by assigning the restriction $\psi_{\X}(\mu): p_{\bi}(\A_{\bi}) \to p_{\bi'}(\A_{\bi'})$ for each mutation $\mu: {\bi} \to {\bi'}$.
\end{enumerate}
\end{dfn}

\begin{dfn}[the positive real part]
For a cluster ensemble $p: \A \to \X$ and $\Z = \A,$ $\U$ or $\X$, define the \emph{positive real part} to be the real-analytic manifold obtained by gluing seed tori by corresponding cluster transformations, as follows:
\[
\left. \Z(\pos):=\bigsqcup_{\bi \in \G} \Z_{\bi}(\mathbb{R}_{>0}) \middle\slash (\mu_k^z) \right.,
\]
where $\Z_\bi(\mathbb{R}_{>0})$ denotes the subset of $\Z_\bi(\mathbb{R})$ defined by the condition that all cluster coordinates are positive.
Note that it is well-defined since positive rational maps preserves positive real parts. 
Similarly we define $\Z(\mathbb{Q}_{>0})$ and $\Z(\mathbb{Z}_{>0})$.
\end{dfn}
Note that we have a natural diffeomorphism $\Z_\bi(\pos) \to \Z(\pos)$ for each $\bi \in \G$. The inverse map $\psi_\bi^z : \Z(\pos) \to \Z_\bi(\pos)$ gives a chart of the manifold.
The cluster modular group acts on positive real parts $\Z(\mathbb{R}_{>0})$ as follows: 
\begin{equation}\label{action}
\xymatrix{
\Z(\pos) \ar[d]_{\phi} \ar[r]^{\psi_\bi^z}  & \Z_\bi(\pos) \ar[d]^{\mu_{i_1}^z \dots \mu_{i_k}^z \sigma^*} \\
\Z(\pos) \ar[r]^{\psi_\bi^z}  &  \Z_\bi(\pos) \\
}
\end{equation}
Here $\phi = \sigma \circ \mu_{i_k} \dots \mu_{i_1} \in \Gamma$ is a mapping class, $\sigma^*$ is the permutation of coordinates induced by the seed isomorphism $\sigma$. The fixed point property of this action is the main subject of the present paper.

\subsection{Cluster complexes}
We define a simplicial complex called the \emph{cluster complex}, on which the cluster modular group acts simplicially. In terms of the action on the cluster complex, we will define the Nielsen-Thurston types of mapping classes in \cref{section: NT types}. We propose here an intermediate definition between that of ~\cite{FZ03} and ~\cite{FG09}.

Let ${\bi}=(I, I_0, \epsilon, d)$ be a seed.
A decorated simplex is an ($n-1$)-dimensional simplex $S$ with a fixed bijection, called a \emph{decoration}, between the set of facets of S and $I-I_0$.
Let $\bS$ be the simplicial complex obtained by gluing (infinite number of) decorated $(n-1)$-dimensional simplices along mutable facets using the decoration. Note that the dual graph $\bS^{\vee}$ is a tree, and there is a natural covering from the set of vertices $V(\bS^{\vee})$ to the set of seeds. An edge of $\bS^{\vee}$ is projected to a mutation under this covering.
Assign mutable $\A$-variables to vertices of $\bS$ in such a manner that:
\begin{enumerate}
\item the reflection with respect to a mutable facet takes the $\A$-variables to the $\A$-variables which are obtained by the corresponding mutation.
\item the labels of variables coincide with the decoration assigned to the facet in the opposite side. 
\item the initial $\A$-coordinates are assigned to the initial simplex.
\end{enumerate}
Note that the assignment is well-defined since the dual graph $\bS^{\vee}$ is a tree.
Similarly we assign $\X$-variables to co-oriented facets of $\bS$ (see \cref{fig: assign}). %%%%
Let $\Delta$ be the subgroup of $\mathrm{Aut}(\bS)$ which consists of elements that  preserve all cluster variables.
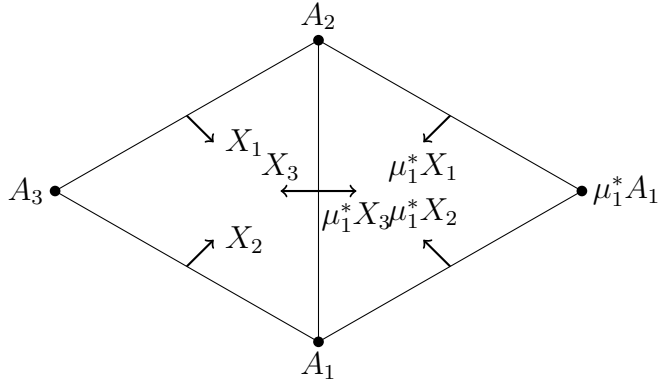
\begin{figure}
\begin{tikzpicture}
\fill (0,0) circle(2pt) coordinate(A);
\fill (0,4) circle(2pt) coordinate(B);
\fill (A) ++(150: 4) circle(2pt) coordinate(C);
\fill (A) ++(30: 4) circle(2pt) coordinate(D);
\draw (A) -- (B);
\draw (A) -- (C);
\draw (A) -- (D);
\draw (C) -- (B);
\draw (D) -- (B);
\path (A) node[below]{$A_1$};
\path (B) node[above]{$A_2$};
\path (C) node[left]{$A_3$};
\path (D) node[right]{$\mu_1^*A_1$};
\draw[->] (0,2) -- (-0.5,2) [thick] node[above]{$X_3$};
\draw[->] (0,2) -- (0.5,2) [thick] node[below]{$\mu_1^*X_3$};
%\fill ($ (C) !0.5! (B) $) circle(2pt);
\draw[->, thick] ($(C) !0.5! (B)$) -- ++(315: 0.5) node[right]{$X_1$};
\draw[->, thick] ($(D) !0.5! (B)$) -- ++(225: 0.5) node[below]{$\mu_1^*X_1$};
\draw[->, thick] ($(C) !0.5! (A)$) -- ++(45: 0.5) node[right]{$X_2$};
\draw[->, thick] ($(D) !0.5! (A)$) -- ++(135: 0.5) node[above]{$\mu_1^*X_2$};
\end{tikzpicture}
\caption{assignment of variables}
\label{fig: assign}
\end{figure}

\begin{dfn}[the cluster complex]\label{cluster complex}
The simplicial complex $\C=\C_{|\bi|}:= \bS \slash \Delta$ is called the \emph{cluster complex}.
A set of vertices $\{\alpha_1, \cdots, \alpha_n\} \subset V(\C)$ is called a \emph{cluster} if it spans a maximal simplex.
\end{dfn}
Let $\C^\vee$ denote the dual graph of the cluster complex. Note that the clusters, equivalently, the vertices of $\C^\vee$, are in one-to-one correspondence with seeds  together with tuples of mutable variables $((A_i), (X_i))$. For a vertex $v \in V(\C^\vee)$, let $[v]$ denote the underlying seed. Then we get coordinate systems of the positive real parts for each vertex $v \in V(\C^\vee)$, as follows:
\begin{align*}\xymatrix{
\psi_v^x: \X(\pos) \ar[r]^{\psi_{[v]}^x} & \X_\bi(\pos) \ar[r]^{(X_i)}  & \mathbb{R}^n_{>0} 
} \\
\xymatrix{
\psi_v^a: \A(\pos) \ar[r]^{\psi_{[v]}^a} & \A_\bi(\pos) \ar[r]^{(A_i)}  & \mathbb{R}^N_{>0} 
}
\end{align*}
The edges of $\C^\vee$ correspond to seed mutations, and the associated coordinate transformations are described by cluster transformations.

\begin{rem}
In ~\cite{FZ03}, the cluster complex is defined to be a simplicial complex whose ground set is the set of mutable $\A$-coordinates, while the definition in ~\cite{FG09} uses all (mutable/frozen) coordinates. In our definition, the frozen $\A$-variables have no corresponding vertices. The existence of the frozen variables does not change the structure of the cluster complex, see Theorem 4.8 of ~\cite{CKLP}.
\end{rem}

\begin{prop}[~\cite{FG09}Lemma 2.15]\label{action on cluster complex}
Let $D$ be the subgroup of $\mathrm{Aut}(\bS)$ which consists of elements which preserve the exchange matrix. Namely, an automorphism $\gamma$ belongs to $D$ if it satisfies $\epsilon^{[\gamma(v)]}_{\gamma(i), \gamma(j)}=\epsilon^{[v]}_{ij}$ for all $v \in V(\C^\vee)$ and $i,j \in [v]$. Then
\begin{enumerate}
\item $\Delta$ is a normal subgroup of $D$, and 
\item the quotient group $D \slash \Delta$ is naturally isomorphic to the cluster modular group $\Gamma$.
\end{enumerate}
In particular, the cluster modular group acts on the cluster complex simplicially.
\end{prop}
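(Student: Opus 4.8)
The plan is to produce an explicit surjective group homomorphism $\Psi\colon D\to\Gamma$ with $\ker\Psi=\Delta$. Then (1) is immediate since kernels are normal, (2) follows from the first isomorphism theorem, and the simplicial action of $\Gamma=D/\Delta$ on $\C=\bS/\Delta$ is simply the one induced by the evident simplicial action of $D$ on $\bS$.

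First I would record the structure of $\mathrm{Aut}(\bS)$. Since $\bS^{\vee}$ is a tree, fix a base simplex $\sigma_0$ carrying the initial seed $\bi$ and the initial cluster coordinates; every simplex $\sigma$ is joined to $\sigma_0$ by a unique edge-path, and reading the facet labels along it gives a word $\mu_{w(\sigma)}=\mu_{i_m}\cdots\mu_{i_1}$ of mutations, hence a seed cluster transformation $\bi\to[\sigma]$. Conversely, a simplicial automorphism $\gamma$ of $\bS$ is determined by the image $\gamma(\sigma_0)$ together with the bijection it induces between the decorations of $\sigma_0$ and of $\gamma(\sigma_0)$; because $\bS$ is glued freely along the tree with compatible decorations, this bijection is one and the same permutation $\rho_\gamma$ of $I-I_0$ at every simplex, and any pair $(\gamma(\sigma_0),\rho_\gamma)$ extends uniquely to an element of $\mathrm{Aut}(\bS)$. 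Writing $\sigma_\gamma$ for $\rho_\gamma$ extended by the identity on $I_0$, we attach to each $\gamma$ the seed cluster transformation $\phi_\gamma:=\sigma_\gamma\circ\mu_{w(\gamma(\sigma_0))}$, a morphism $\bi\to\sigma_\gamma([\gamma(\sigma_0)])$ in $\G_{|\bi|}$.

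The key local-to-global observation is that $\gamma\in D$ if and only if the exchange-matrix condition holds at the single simplex $\sigma_0$, equivalently $\sigma_\gamma$ is a seed isomorphism $[\gamma(\sigma_0)]\to\bi$, equivalently $\phi_\gamma$ is a seed cluster transformation from $\bi$ to itself, hence defines a class $[\phi_\gamma]\in\Gamma$. Propagation along the tree uses that a seed isomorphism $\sigma$ intertwines mutations, $\sigma\circ\mu_{\sigma(k)}=\mu_k\circ\sigma$, so validity of the condition at $\sigma_0$ forces it at each neighbour and thus everywhere by induction on tree distance. I then set $\Psi(\gamma):=[\phi_\gamma]$ and check, by the same bookkeeping (the path to $\gamma_1\gamma_2(\sigma_0)$ is $w_1$ followed by the $\rho_{\gamma_1}$-relabelling of $w_2$, and $\sigma_{\gamma_1\gamma_2}=\sigma_{\gamma_1}\sigma_{\gamma_2}$), that $\Psi$ is a group homomorphism. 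To compute the kernel, first note $\Delta\subseteq D$: an automorphism fixing every cluster variable fixes, at each edge of $\bS^{\vee}$, the rational function $X_i'/X_i=(1+X_k^{\mathrm{sgn}\,\epsilon_{ki}})^{\epsilon_{ki}}$, which determines $\epsilon_{ki}$; ranging over all edges recovers the whole exchange matrix. Then $\gamma\in\ker\Psi$ means $\phi_\gamma$ is a trivial seed cluster transformation, i.e.\ acts as the identity on all $\A$- and $\X$-coordinates; by functoriality of cluster transformations along the tree this is equivalent to $\gamma$ fixing every $\A$-variable on the vertices and every $\X$-variable on the co-oriented facets of $\bS$, i.e.\ $\gamma\in\Delta$. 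Hence $\Delta=\ker\Psi\trianglelefteq D$, which is (1). For surjectivity, given $\phi\in\Gamma$ choose a representative $\sigma\circ\mu_{i_k}\cdots\mu_{i_1}$ with $\sigma$ a seed isomorphism back to $\bi$, let $\sigma'$ be the simplex reached from $\sigma_0$ along $\mu_{i_k}\cdots\mu_{i_1}$, and let $\gamma$ be the unique automorphism with $\gamma(\sigma_0)=\sigma'$ and $\rho_\gamma=\sigma|_{I-I_0}$; the criterion above gives $\gamma\in D$ and $\Psi(\gamma)=\phi$. Thus $\Psi$ induces an isomorphism $D/\Delta\cong\Gamma$, proving (2), and descending the $D$-action on $\bS$ gives the asserted simplicial action of $\Gamma$ on $\C$.

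\textbf{Main obstacle.} The geometry is easy — the tree structure of $\bS^{\vee}$ and the unique extension of simplicial automorphisms are essentially formal. The real work, and where the argument is delicate, is purely the conventions: keeping the direction of seed cluster transformations, the relabelling permutations $\sigma_\gamma$, and the order of composition consistent between the simplicial picture ($\mathrm{Aut}(\bS)$ acting on the tree of decorated simplices) and the groupoid picture (morphisms of $\G_{|\bi|}$ modulo trivial ones), and verifying carefully that the single-simplex criteria for membership in $D$ and in $\Delta$ genuinely propagate to all simplices via the mutation–isomorphism intertwining relations.
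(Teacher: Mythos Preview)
The paper does not supply its own proof of this proposition: it is stated with a citation to \cite{FG09} Lemma~2.15 and then used. So there is no ``paper's proof'' to compare your proposal against; your write-up stands on its own.

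Your strategy is sound and essentially complete. The tree structure of $\bS^\vee$ does give the parametrisation of $\mathrm{Aut}(\bS)$ you describe, the map $\Psi(\gamma)=[\sigma_\gamma\circ\mu_{w(\gamma(\sigma_0))}]$ is the natural one, and the reduction of the $D$-condition to the single base simplex via the intertwining relation $\sigma\circ\mu_{\sigma(k)}=\mu_k\circ\sigma$ is the right mechanism. Surjectivity and the identification $\ker\Psi=\Delta$ are argued correctly.

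One small imprecision worth tightening: in your proof that $\Delta\subseteq D$, the ratio $X_i'/X_i=(1+X_k^{\pm})^{\epsilon_{ki}}$ only recovers $\epsilon_{ki}$ for \emph{mutable} $i$, since $\X$-coordinates are assigned only to mutable vertices in this paper's conventions. To recover the mutable--frozen entries $\epsilon_{kj}$ with $j\in I_0$, use instead the $\A$-transformation $A_k'A_k=\prod_{\epsilon_{kj}>0}A_j^{\epsilon_{kj}}+\prod_{\epsilon_{kj}<0}A_j^{-\epsilon_{kj}}$, which involves all $j\in I$; algebraic independence of the initial $A_j$ then yields the entire $k$-th row. (The frozen--frozen block is unchanged by mutation and the permutation is the identity on $I_0$, so that part of the $D$-condition is automatic.) With this adjustment your argument goes through.
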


\begin{ex}\label{example: cluster complex}
The cluster complexes associated with seeds defined in \cref{example: cluster modular group} are as follows:
\begin{enumerate} 
\item (Type $A_2$). Let ${\bi}$ be the seed of type $A_2$. The cluster complex is a pentagon. The generator $\phi=(0\ 1)\circ \mu_0 \in \Gamma_{A_2}$ acts on the pentagon by the cyclic shift. 

\item (Type $L_k$ for $k \geq 2$). Let $\bi$ be the seed of type $L_k$. The cluster complex is 1-dimensional, and the generator $\phi=(0\ 1)\circ \mu_0 \in \Gamma_{L_k}$ acts by the shift of length $1$. The fact that $\phi$ has infinite order implies that the cluster complex is the line of infinite length. See \cref{example: periodic}.
\end{enumerate}
\end{ex}

\subsection{Tropical compactifications of positive spaces}

Next we define tropical compactifications of positive spaces, which are described in ~\cite{FG16}~\cite{Le16}. 

\begin{dfn}[the tropical limit]
For a positive rational map $f(X_1, \cdots , X_N)$ over $\mathbb{R}$, we define the \emph{tropical limit} $\Trop (f)$ of $f$ by
\[
\Trop (f)(x_1, \cdots, x_N):= \lim_{\epsilon \to 0} \epsilon \log f(e^{x_1/\epsilon}, \cdots, e^{x_N/\epsilon}),
\]
which defines a piecewise-linear function on $\mathbb{R}^N$.
\end{dfn}

\begin{dfn}[the tropical space]
Let $\psi_{\Z}: \G \to \Pos(\mathbb{R})$ be a positive space. Then let $\Trop(\psi_{\Z}): \G \to \mathrm{PL}$ be the functor given by the tropical limits of positive rational maps given by $\psi_{\Z}$, where $\mathrm{PL}$ denotes the category whose objects are euclidean spaces and morphisms are piecewise-linear $(PL)$ maps. Let $\Z(\trop)$ be the PL manifold obtained by gluing coordinate euclidean spaces by PL maps given by $\Trop(\psi_{\Z})$, which is called the \emph{tropical space}. 
\end{dfn}
Note that since PL maps given by tropical limits are homogeneous, $\mathbb{R}_{>0}$  naturally acts on $\Z(\trop)$. The quotient $P\Z(\trop):=(\Z(\trop) \backslash \{0\})\slash \mathbb{R}_{>0}$ is PL homeomorphic to a sphere. Let us denote the image of $G \in \Z(\trop) \backslash \{0\}$ under the natural projection by $[G] \in P\Z(\trop)$. The cluster modular group acts on $\Z(\trop)$ and $P\Z(\trop)$ by PL homeomorphisms, similarly as \cref{action}.

\begin{dfn}[a divergent sequence]\label{divergent}
For a positive space $\psi_{\Z}: \G \to \Pos(\mathbb{R})$, we say that a sequence $(g_m)$ in $\Z(\pos)$ is \emph{divergent} if for each compact set $K \subset \Z(\pos)$ there is a number $M$ such that $g_m \not\in K$ for all $m \geq M$.
\end{dfn}

\begin{dfn}[the tropical compactification]
Let $\psi_\X: \G \to \Pos(\mathbb{R})$ be the $\X$-space associated to a seed. For a vertex $v \in V(\C^\vee)$, let $\bi=[v]=(I,I_0, \epsilon, d)$ be the underlying seed, and $\psi_v^x$ and $\mathrm{Trop}(\psi_v^x)$ the associated positive and tropical coordinates, respectively. Then we define a homeomorphism $\F_v: \X(\pos) \to \X(\trop)$ by the following commutative diagram:
\[
\xymatrix{
\X(\pos) \ar[d]_{\F_v} \ar[r]^{\psi_v^x}  & \mathbb{R}_{>0}^n \ar[d]^{\log} \\
\X(\trop) \ar[r]^{\mathrm{Trop}(\psi_v^x)}  & \mathbb{R}^n \\
}.
\]
Fixing a vertex $v \in V(\C^\vee)$, we define the \emph{tropical compactification} by $\overline{\X}:= \X(\pos) \sqcup P\X(\trop)$, and endow it with the topology of the spherical compactification. Namely, a divergent sequence $(g_n)$ in $\X(\pos)$ converges to $[G] \in P\X(\trop)$ in $\overline{\X}$ if and only if $[\F_v(g_n)]$ converges to $[G]$  in $P\X(\trop)$.
Similarly we can consider the tropical compactifications of $\A$ and $\U$-spaces, respectively.
\end{dfn}

\begin{thm}[Le, ~\cite{Le16} Section 7]\label{Le}
Let $p:\A \to \X$ be a cluster ensemble, and $\Z=\A$, $\U$ or $\X$.
If we have $[\F_v(g_m)] \to [G]$ in $P\Z(\trop)$ for some $v \in V(\C^\vee)$, then we have $[\F_{v'}(g_m)] \to [G]$ in $P\Z(\trop)$ for all $v' \in V(\C^\vee)$. In particular the definition of the tropical compactification is independent of the choice of the vertex $v \in V(\C^\vee)$.
\end{thm}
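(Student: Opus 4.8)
The plan is to carry out the whole comparison inside coordinate charts of the PL manifold $\Z(\trop)$ and to reduce the statement to one elementary estimate relating a positive rational map to its tropicalisation. Fix two vertices $v,v'\in V(\C^\vee)$ and let $\Trop(\psi_v^z),\Trop(\psi_{v'}^z)\colon\Z(\trop)\to\mathbb{R}^d$ be the corresponding PL charts, with $d$ the dimension of $\Z(\trop)$. Since tropical limits are homogeneous of degree one, these charts are $\pos$-equivariant (and send $0$ to $0$), hence descend to homeomorphisms $P\Z(\trop)\xrightarrow{\ \sim\ }S^{d-1}$; in particular $[\F_v(g_m)]\to[G]$ in $P\Z(\trop)$ iff, read in the $v'$-chart, the coordinate vectors of $\F_v(g_m)$ converge in $S^{d-1}$ to $G^{(v')}/\lVert G^{(v')}\rVert$, where $G^{(v')}:=\Trop(\psi_{v'}^z)(G)\ne 0$. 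Put $x_m:=\log\psi_v^z(g_m)\in\mathbb{R}^d$ and let $P:=\psi_{v'}^z\circ(\psi_v^z)^{-1}$ be the positive invertible transition map between the $v$- and $v'$-charts of $\Z(\pos)$. By the defining diagram of $\F_v$ together with functoriality of $\Trop$ (built into the functor $\Trop(\psi_\Z)$, so that chart transitions of $\Z(\trop)$ are tropicalisations of those of $\Z(\pos)$), the $v'$-coordinate vector of $\F_v(g_m)$ is $\Trop(P)(x_m)$, while that of $\F_{v'}(g_m)$ is $\log P(e^{x_m})$ (coordinatewise exponential, then $P$, then $\log$). So it suffices to prove that $\Trop(P)(x_m)$ and $\log P(e^{x_m})$ have the same projective limit in $S^{d-1}$ whenever either does.

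The one fact I would need is that for every positive rational map $P$ between split algebraic tori over $\mathbb{R}$ there is a constant $C_P\ge 0$ with
\[
\bigl\lVert\,\log P(e^{x})-\Trop(P)(x)\,\bigr\rVert_{\infty}\ \le\ C_P\qquad\text{for all }x\in\mathbb{R}^d .
\]
This is the classical ``$\log$-sum-exp versus $\max$'' comparison: for a positive Laurent polynomial $f=\sum_\alpha a_\alpha X^\alpha$ with $a_\alpha\in\mathbb{Z}_{>0}$ one has $\max_\alpha\langle\alpha,x\rangle\le\log f(e^{x})=\log\sum_\alpha a_\alpha e^{\langle\alpha,x\rangle}\le\max_\alpha\langle\alpha,x\rangle+\log\!\bigl(\sum_\alpha a_\alpha\bigr)$, whereas $\Trop(f)(x)=\max_\alpha\langle\alpha,x\rangle$; writing each component of $P$ as $f_1/f_2$ and subtracting gives the bound componentwise, with $C_P$ the maximum of the finitely many constants $\log(\sum_\alpha a_\alpha)$ that occur.

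Finally I would assemble the pieces. As $(g_m)$ is divergent in $\Z(\pos)$ and $\F_v,\F_{v'}$ are homeomorphisms onto $\Z(\trop)$, both $\F_v(g_m)$ and $\F_{v'}(g_m)$ are divergent in $\Z(\trop)$; reading this in the $v'$-chart yields $\lVert\Trop(P)(x_m)\rVert\to\infty$ and $\lVert\log P(e^{x_m})\rVert\to\infty$. Assuming $[\F_v(g_m)]\to[G]$, i.e. $\Trop(P)(x_m)/\lVert\Trop(P)(x_m)\rVert\to G^{(v')}/\lVert G^{(v')}\rVert$, the displayed estimate shows that $\Trop(P)(x_m)$ and $\log P(e^{x_m})$ differ by a uniformly bounded amount, so their norms are asymptotically equal and $\log P(e^{x_m})/\lVert\log P(e^{x_m})\rVert$ converges to the same unit vector; that is $[\F_{v'}(g_m)]\to[G]$. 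Exchanging the roles of $v$ and $v'$ gives the reverse implication, so the condition is vertex-independent for every pair $v,v'$; since $\C^\vee$ is connected (it is a quotient of the tree $\bS^\vee$), the ``some $v$ $\Rightarrow$ all $v'$'' assertion, and hence the well-definedness of $\overline{\Z}=\Z(\pos)\sqcup P\Z(\trop)$, follow at once. The only step with genuine content is the displayed inequality; everything else is unwinding the definitions, and I expect the one place to be careful is the translation between the intrinsic projective limit in $P\Z(\trop)$ and its chartwise description — in particular the identity ``$v'$-coordinate of $\F_v(g_m)$ $=\Trop(P)(x_m)$'', which relies on $\Trop$ being a functor on positive maps.
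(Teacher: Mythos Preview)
The paper does not give its own proof of this theorem: it is quoted from \cite{Le16} (Section~7) and used as a black box, so there is nothing in the paper to compare against. Your argument is correct and is essentially the standard one. The heart of the matter is exactly the uniform estimate
\[
\bigl\lVert\,\log P(e^{x})-\Trop(P)(x)\,\bigr\rVert_{\infty}\ \le\ C_P,
\]
which you derive cleanly from the log--sum--exp versus max comparison for positive Laurent polynomials and then extend to ratios $f_1/f_2$ by subtraction; this is precisely the mechanism behind Le's result. Two small remarks. First, the theorem as stated in the paper leaves the divergence of $(g_m)$ implicit (it is part of the definition of convergence to a boundary point in $\overline{\Z}$); you correctly invoke it, and indeed without it the conclusion fails, since for a convergent $(g_m)$ the points $\F_v(g_\infty)$ and $\F_{v'}(g_\infty)$ generally represent different projective classes. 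Second, your closing comment that ``the only step with genuine content is the displayed inequality'' is exactly right; the identification of the $v'$-coordinates of $\F_v(g_m)$ with $\Trop(P)(x_m)$ is immediate from the definition of $\Z(\trop)$, whose transition maps are \emph{by construction} the tropical limits of those of $\Z(\pos)$, so no extra functoriality statement is needed there.
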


\begin{cor}
Let $p: \A \to \X$ be a cluster ensemble, and $\Z=\A$, $\U$ or $\X$. Then the action of the cluster modular group on the positive real part $\Z(\pos)$ continuously extends to the tropical compactification $\overline{\Z}$.
\end{cor}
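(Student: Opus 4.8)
The plan is to reduce continuity of the extended action to a single ``tropical dequantization'' estimate. Fix a mapping class $\phi\in\Gamma$ and let $\bar\phi\colon\overline{\Z}\to\overline{\Z}$ denote the map that agrees with the given real-analytic action on $\Z(\pos)$ and with the projectivized PL action on $P\Z(\trop)$; I would prove $\bar\phi$ continuous, and then the same applied to $\phi^{-1}$ shows it is a homeomorphism. Since $\overline{\Z}$ carries, by construction, the topology of the spherical (closed-ball) compactification of $\mathbb{R}^{D}\cong\Z(\pos)$, where $D=\dim\Z(\pos)$, it is compact and metrizable, so it suffices to verify sequential continuity. Let $p_m\to p$ in $\overline{\Z}$. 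Since $\Z(\pos)$ is open in $\overline{\Z}$, after passing to a subsequence we are in one of three cases: (a) $p_m,p\in\Z(\pos)$, where continuity is exactly the real-analyticity of the action on $\Z(\pos)$; (b) $p=[G]\in P\Z(\trop)$ and all $p_m\in P\Z(\trop)$, where continuity is the already-recorded fact that $\Gamma$ acts on $P\Z(\trop)$ by PL homeomorphisms; or the essential case (c) $p=[G]\in P\Z(\trop)$ with $(p_m)$ a divergent sequence in $\Z(\pos)$.

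For case (c), I would fix a vertex $v\in V(\C^\vee)$ and work in the coordinate chart $\psi_v^z\colon\Z(\pos)\to\mathbb{R}^{D}_{>0}$ and the tropical chart $\mathrm{Trop}(\psi_v^z)\colon\Z(\trop)\to\mathbb{R}^{D}$. Writing $\phi=\sigma\circ\mu_{i_k}\cdots\mu_{i_1}$, the action of $\phi$ on $\Z(\pos)$ reads in the chart $\psi_v^z$ as a positive (subtraction-free) rational map $R\colon\mathbb{R}^{D}_{>0}\to\mathbb{R}^{D}_{>0}$, namely the relevant composition of cluster transformations and a coordinate permutation (see \eqref{action}); and, by the analogous definition of the action on $\Z(\trop)$, the action of $\phi$ there reads in the chart $\mathrm{Trop}(\psi_v^z)$ as the homogeneous PL automorphism $\mathrm{Trop}(R)$. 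Set $y_m:=\log\psi_v^z(p_m)=\mathrm{Trop}(\psi_v^z)(\F_v(p_m))\in\mathbb{R}^{D}$. The hypothesis $p_m\to[G]$ says $\lvert y_m\rvert\to\infty$ and $y_m/\lvert y_m\rvert\to\bar G$, where $\bar G$ is the unit vector representing $[G]$ in this chart. Put $\epsilon_m:=1/\lvert y_m\rvert\to0^{+}$, so that $\epsilon_m y_m\to\bar G$ and $\psi_v^z(p_m)=\exp((\epsilon_m y_m)/\epsilon_m)$.

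The computation then rests on an elementary dequantization lemma: for a positive rational map $R$ over $\mathbb{R}$ and any sequence $x_m\to x$ in $\mathbb{R}^{D}$, one has $\epsilon_m\log R(\exp(x_m/\epsilon_m))\to\mathrm{Trop}(R)(x)$ as $\epsilon_m\to0^{+}$. Componentwise, for a polynomial $\sum_\alpha a_\alpha X^\alpha$ with $a_\alpha\in\mathbb{Z}_{\ge0}$ this follows from the bounds $\max_{a_\alpha\neq0}\langle\alpha,x\rangle\le\epsilon\log\sum_\alpha a_\alpha e^{\langle\alpha,x\rangle/\epsilon}\le\max_{a_\alpha\neq0}\langle\alpha,x\rangle+\epsilon\log(\text{number of monomials})$ together with the continuity of a finite maximum of linear forms, and it passes to a quotient of two such polynomials. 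Applying the lemma with $x_m:=\epsilon_m y_m\to\bar G$ gives $\epsilon_m\log\psi_v^z(\phi(p_m))\to\mathrm{Trop}(R)(\bar G)$. Since $\mathrm{Trop}(R)$ is a homogeneous bijection fixing the origin, $\mathrm{Trop}(R)(\bar G)\neq0$; hence $\lvert\log\psi_v^z(\phi(p_m))\rvert\to\infty$, so $\phi(p_m)$ is again divergent, and $[\F_v(\phi(p_m))]\to[\mathrm{Trop}(R)(\bar G)]=\bar\phi([G])$ in $P\Z(\trop)$. By \cref{Le} this conclusion does not depend on the choice of $v$, so $\bar\phi(p_m)\to\bar\phi([G])$ in $\overline{\Z}$, settling case (c). I expect the only point needing genuine care to be the uniformity in the dequantization lemma---that replacing $x$ by a nearby $x_m$ in the exponent does not change the limit---but this is routine log-sum-exp bookkeeping; everything else is formal, given \cref{Le} and the analytic, respectively PL, nature of the two actions.
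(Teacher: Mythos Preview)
Your proof is correct. Both you and the paper reduce to the essential case of a divergent sequence $g_m\to[G]$ in $\overline{\Z}$, but the arguments diverge from there. The paper's proof is a one-liner invoking \cref{Le}: implicitly, the action of $\phi$ in a fixed chart $\psi_v^z$ factors as a change-of-chart map (from $v$ to some $v'$) followed by a coordinate permutation $\sigma^*$, so chart-independence of convergence absorbs the cluster-transformation part and the permutation (being linear) trivially preserves projective convergence. You instead prove the result from scratch via a uniform dequantization lemma, showing directly that $\epsilon_m\log R(\exp(x_m/\epsilon_m))\to\mathrm{Trop}(R)(x)$ whenever $x_m\to x$ and $\epsilon_m\to 0^+$; this is in fact the analytic heart of Le's theorem, so your closing appeal to \cref{Le} is redundant once you have established convergence in one chart. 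Your route is more self-contained and exposes the mechanism (the log-sum-exp sandwich), while the paper's is terser and leverages the black box already available. One minor remark: in your sketch of the upper bound, ``$\epsilon\log(\text{number of monomials})$'' should really be $\epsilon\log(\sum_\alpha a_\alpha)$ to accommodate integer coefficients larger than $1$, but this does not affect the limit.
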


\begin{proof}
We need to show that $\phi_*(g_m) \to \phi_*([G])$ in $\overline{\Z}$ for each mapping class $\phi \in \Gamma$ and a divergent sequence $(g_m)$ such that $g_m \to [G]$ in $\overline{\Z}$. Here the action in the left-hand side is given by a composition of a finite number of cluster transformations and a permutation, while the action in the right-hand side is given by its tropical limit. Then the assertion follows from \cref{Le}.
\end{proof}
Note that each tropical compactification is homeomorphic to a closed disk of an appropriate dimension.

\section{Nielsen-Thurston types on cluster modular groups}\label{section: NT types}
In this section we define three types of elements of cluster modular groups in analogy with the classical Nielsen-Thurston types (see \cref{comparison}). Recall that the cluster modular group acts on the cluster complex simplicially.

\begin{dfn}[Nielsen-Thurston type]\label{dfn; NT types}
Let $\bi$ be a seed, $\C=\C_{|\bi|}$ be the corresponding cluster complex and $\Gamma=\Gamma_{|\bi|}$ the corresponding cluster modular group. An element $\phi \in \Gamma$ is called
\begin{enumerate}
\item periodic if $\phi$ has finite order,
\item cluster-reducible if $\phi$ has a fixed point in the geometric realization $|\C|$ of the cluster complex, and
\item cluster-pseudo-Anosov (cluster-pA) if no power of $\phi$ is cluster-reducible.
\end{enumerate}
\end{dfn}
Recall that the cluster modular group acts on the tropical compactifications $\overline{\A}=\A(\pos) \sqcup P\A(\trop)$ and $\overline{\X}=\X(\pos) \sqcup P\X(\trop)$, which are closed disks of dimension $N$ and $n$, respectively. Hence Brouwer's fixed point theorem says that each mapping class has at least one fixed point on each of the tropical compactifications.
The following is the main theorem of the present paper, which is an analogue of the classical Nielsen-Thurston classification theory.

\begin{thm}\label{thm; main thm}
Let $\bi$ be a seed and $\phi \in \Gamma_{|\bi|}$ a mapping class. Then the followings hold.
\begin{enumerate}
\item If the mapping class $\phi \in \Gamma$ is periodic, then it has fixed points in $\A(\pos)$ and $\X(\pos)$.

\item If the mapping class $\phi \in \Gamma$ is cluster-reducible, then there exists a point $L \in \X(\trop)_+$ such that $\phi[L]=[L]$.

\end{enumerate}
If the seed $\bi$ is of \Teich type (see \cref{dfn; Teich type}), the followings also hold:
\begin{enumerate}
\item[(1)$'$] if $\phi$ has a fixed point in $\A(\pos)$ or $\X(\pos)$, then $\phi$ is periodic.
\item[(2)$'$] if there exists a point $L \in \X(\trop)_+$ such that $\phi[L]=[L]$, then $\phi$ is cluster-reducible.
\end{enumerate}
\end{thm}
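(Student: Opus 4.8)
The four assertions split across three mechanisms: an equivariant fixed‑point argument for (1), the construction of an equivariant map from the cluster complex into $P\X(\trop)$ for (2), and the two special properties of \Teich type seeds recorded in \cref{dfn; Teich type} for (1)$'$ and (2)$'$. For (1), note that $\phi$ periodic means $\langle\phi\rangle$ is a finite \emph{cyclic} group acting real‑analytically, hence smoothly, on the contractible finite‑dimensional manifold $\A(\pos)$; by the fixed‑point theorem for finite cyclic group actions on $\mathbb{Z}$‑acyclic finitistic spaces (Smith theory, together with the standard reduction from prime‑power to arbitrary order that uses cyclicity), $\phi$ fixes some $g_0\in\A(\pos)$. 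Since the monomial morphism $p\colon\A\to\X$ has $\iota=\mathrm{id}$, the induced map $p\colon\A(\pos)\to\X(\pos)$ is $\Gamma$‑equivariant, so $p(g_0)\in\X(\pos)$ is also fixed by $\phi$. This gives (1).

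\textbf{Part (2).} The main step is to build a $\Gamma$‑equivariant continuous map
\[
\Psi\colon |\C|\longrightarrow P\X(\trop)
\]
with image $P\X(\trop)_+$. A maximal simplex of $\C$ is a vertex $v\in V(\C^\vee)$, hence a seed $[v]$ with cluster $\X$‑chart $\psi_v^x$ and tropical chart $\mathrm{Trop}(\psi_v^x)$; its $n$ co‑oriented facets correspond bijectively to the coordinates $X_1,\dots,X_n$, in such a way that crossing the facet attached to $X_k$ is the mutation $\mu_k$ and the barycentric coordinate vanishing on that facet is matched with the tropical coordinate $x_k$. On this simplex let $\Psi$ send the point with barycentric coordinates $(t_1,\dots,t_n)$ to the class in $P\X(\trop)$ of the tropical point having coordinates $(t_1,\dots,t_n)$ in the $v$‑chart (up to the global sign fixed in \cref{dfn; Teich type}). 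Well‑definedness reduces to comparing the two determinations across a shared mutable facet, i.e.\ across $\mu_k$: by \cref{cluster transf}(1), $\Trop(\mu_k^x)$ sends $(x_1,\dots,x_n)$ to $(x_1',\dots,x_n')$ with $x_k'=-x_k$ and $x_i'=x_i+\epsilon_{ki}\max\{0,\,\mathrm{sgn}(\epsilon_{ki})\,x_k\}$ for $i\neq k$, which restricts to the identity on the wall $\{x_k=0\}$; this matches the simplicial identification of that facet, whose remaining vertices and hence remaining barycentric coordinates are shared. Continuity is clear, and $\Gamma$‑equivariance follows from \cref{action on cluster complex} together with the description of the action on $\X(\trop)$ parallel to \cref{action}, since $\Gamma$ acts simplicially on $\C$ and the charts transform compatibly. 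The image of $\Psi$ is the union over all charts of the projectivised positive cones, i.e.\ $P\X(\trop)_+$. Now if $\phi$ is cluster‑reducible it fixes some $x\in|\C|$, and then any representative $L\in\X(\trop)_+$ of $\Psi(x)$ satisfies $\phi[L]=\Psi(\phi x)=\Psi(x)=[L]$, proving (2).

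\textbf{Parts (1)$'$ and (2)$'$.} For a seed of \Teich type, \cref{dfn; Teich type} supplies (i) that the $\Gamma$‑action on each of $\A(\pos)$ and $\X(\pos)$ is properly discontinuous, and (ii) that the map $\Psi$ of part (2) is a $\Gamma$‑equivariant homeomorphism onto $P\X(\trop)_+$. For (1)$'$: if $\phi$ fixes a point $z$ of $\A(\pos)$ or of $\X(\pos)$, then $\langle\phi\rangle\subset\mathrm{Stab}_\Gamma(z)$, which is finite by (i), so $\phi$ is periodic. For (2)$'$: if $\phi[L]=[L]$ for some $L\in\X(\trop)_+$, then $[L]\in P\X(\trop)_+$ is fixed by $\phi$, and $\Psi^{-1}([L])\in|\C|$ is then a fixed point of $\phi$ by (ii), so $\phi$ is cluster‑reducible.

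\textbf{Main obstacle.} The decisive difficulty is property (ii): for a general seed $\Psi$ need not be injective (distinct faces of $\C$ may be folded together in $P\X(\trop)$, and $P\X(\trop)_+$ may be a topologically pathological subset of the sphere $P\X(\trop)$), so (2)$'$ genuinely fails without the \Teich type hypothesis. Establishing (i) and (ii) for the announced families — seeds of finite type, seeds from triangulated surfaces, and rank $2$ seeds of finite mutation type — is the technical core carried out in the later sections: in the surface case (ii) is the identification of $|\C|$ with a finite cover of the arc complex and of $P\X(\trop)_+$ with the corresponding space of measured laminations, while (i) is the proper discontinuity of the mapping class group action on the decorated and enhanced \Teich spaces. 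The fixed‑point input for (1) is comparatively routine but, for $\phi$ of composite order, not entirely formal; this is exactly where the cyclicity of $\langle\phi\rangle$ (not merely its finiteness) is used.
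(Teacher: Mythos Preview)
Your overall architecture matches the paper's: construct the $\Gamma$-equivariant map $\Psi\colon |\C| \to P\X(\trop)_+$ for (2), then invoke the two conditions (T1), (T2) defining \Teich type for (1)$'$ and (2)$'$. Parts (2) and (2)$'$ are essentially the paper's \cref{Psi equivariance} and \cref{isomorphism}.

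\textbf{Part (1): a gap, and a different route.} Your Smith-theory argument on the open manifold $\A(\pos)\cong\mathbb{R}^N$ is not complete. Smith theory gives mod-$p$ acyclicity of the fixed set for a $\mathbb{Z}_p$-action, and this iterates for $p$-groups, but the ``standard reduction'' to arbitrary cyclic order does not go through: after taking fixed points under the Sylow $p$-subgroup you only know mod-$p$ acyclicity, which tells you nothing for the next prime $q$. The conclusion you want (finite cyclic groups always fix a point on a $\mathbb{Z}$-acyclic finitistic space) is true but lies deeper, at the level of Oliver's classification of groups admitting fixed-point-free disk actions; it is not a routine consequence of Smith theory. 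The paper sidesteps this entirely by passing to the tropical compactification $\overline{\Z}\cong D^N$: Brouwer gives a fixed point in the closed disk, and then a Lefschetz-type argument (Brown's theorem, \cref{Brown}) applied to the quotient sphere $D^N/\partial D^N$ rules out the case where all fixed points lie on the boundary, via a parity contradiction ($\mathrm{Lef}(\tilde\phi)\in\{0,2\}$ versus $\chi(\mathrm{pt})=1$). Your shortcut of pushing an $\A$-fixed point to $\X$ through $p$ is a pleasant observation, but unnecessary once one argues uniformly on $\overline{\Z}$ for $\Z=\A,\X$.

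\textbf{Part (1)$'$: a mislabelling.} You say that \Teich type supplies proper discontinuity of the $\Gamma$-action, but that is not what (T1) asserts: (T1) is the growth condition of \cref{condition(T1)}, which is \emph{implied by} proper discontinuity (\cref{growth}) rather than equivalent to it. The paper uses (T1) directly to show that every orbit of an infinite-order element diverges, hence such an element has no fixed point in $\Z(\pos)$. This yields exactly the finiteness of point stabilizers you invoke, so your conclusion survives once you state the hypothesis correctly.
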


We prove the theorem in the following several subsections. 
The asymptotic behavior of orbits of certain type of cluster-pA classes on the tropical compactification of the $\A$-space will be discussed in \cref{sub: cluster Dehn twists}.

\subsection{Periodic classes}

Let us start by studying the fixed point property of periodic classes. Let $\Z=\A$ or $\X$. 

\begin{prop}\label{periodic}
Let $\bi$ be a seed, and $\Gamma=\Gamma_{|\bi|}$ the associated cluster modular group. 
For any $\phi \in \Gamma$, consider the following conditions:

\begin{enumerate}
{\renewcommand{\labelenumi}{(\roman{enumi})}
\item $\phi$ fixes a cell $C \in \C$ of finite type,
\item $\phi$ is periodic, and
\item $\phi$ has fixed points in $\Z(\pos)$}.
\end{enumerate}
Then we have $\mathrm{(i)} \Rightarrow (\mathrm{ii}) \Rightarrow (\mathrm{iii})$. Here a cell $C$ in the cluster complex is \emph{of finite type} if the set of supercells of $C$ is a finite set. 
\end{prop}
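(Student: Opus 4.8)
The plan is to prove the two implications separately, each by a direct construction. For $\mathrm{(i)} \Rightarrow \mathrm{(ii)}$: suppose $\phi$ fixes a cell $C$ of finite type. I would first pass to a power $\phi^l$ that fixes $C$ pointwise (rather than merely setwise); since $C$ has finitely many facets this is possible with $l \leq (\dim C + 1)!$. Now consider the set $\mathcal{S}(C)$ of supercells of $C$, which is finite by hypothesis. The automorphism $\phi^l$ permutes $\mathcal{S}(C)$, and the maximal cells in $\mathcal{S}(C)$ correspond to clusters (equivalently, vertices of $\C^\vee$) all of which share the fixed face $C$. Pick one such maximal cell $\sigma$, corresponding to a vertex $v \in V(\C^\vee)$; then $\phi^{lm}(\sigma) = \sigma$ for some $m \geq 1$ because $\mathcal{S}(C)$ is finite. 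Via the identification $D/\Delta \cong \Gamma$ of \cref{action on cluster complex}, an element of $\Gamma$ fixing the vertex $v \in V(\C^\vee)$ together with its decoration corresponds to a seed cluster transformation from $[v]$ to itself that is a permutation of coordinates composed with (a trivial word of) mutations; since it fixes the cluster variables attached to $v$ up to $\Delta$, the induced $\A$- and $\X$-transformations are monomial. The key point is then that a \emph{monomial} positive transformation which is also an automorphism of the seed torus of finite order on the exchange matrix must itself be periodic — this reduces to the statement that a finite-order monomial automorphism of a split torus given by an integer matrix $M$ with $M^k = \mathrm{id}$ for some $k$ has finite order as a map, which is immediate. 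Hence $\phi^{lm}$ has finite order, so $\phi$ does.

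For $\mathrm{(ii)} \Rightarrow \mathrm{(iii)}$: suppose $\phi^k = \mathrm{id}$ in $\Gamma$. The action of $\phi$ on $\Z(\pos)$ (for $\Z = \A$ or $\X$) is by a real-analytic diffeomorphism of a space diffeomorphic to $\mathbb{R}^N$ (resp. $\mathbb{R}^n$). Since $\langle \phi \rangle$ is a finite group acting on a contractible manifold, I would invoke a standard averaging/fixed-point argument: either use that a finite group action on $\mathbb{R}^d$ (smoothly, or even just continuously on a disk) has a fixed point — via Brouwer after compactifying, or more cleanly by averaging a Riemannian metric to make $\phi$ an isometry of a complete simply-connected nonpositively... no, $\Z(\pos)$ need not be CAT(0), so the cleanest route is: extend the action to the tropical compactification $\overline{\Z}$, which is a closed disk, apply Brouwer to get a fixed point $q$, and then argue $q$ cannot lie on the boundary $P\Z(\trop)$. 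That last step is the delicate one: a fixed point on the boundary would have to be ruled out, which is not automatic. An alternative that avoids this: average any Riemannian metric on $\Z(\pos) \cong \mathbb{R}^d$ over the finite group $\langle \phi \rangle$ to get a $\phi$-invariant metric; this need not be complete, but one can instead use that the Lefschetz number of a finite-order homeomorphism of $\mathbb{R}^d$ equals $1$ (it acts trivially on $H_*$), and combine with the fact that the fixed-point set of a finite-order diffeomorphism is a nonempty submanifold by Smith theory / the slice theorem once we know it is compact. The genuinely robust argument: a finite group acting smoothly on $\mathbb{R}^d$ has a fixed point — proof: linearize; take any orbit $\{g \cdot x_0\}$, its convex hull in an invariant chart... the standard statement is that any compact group acting on $\mathbb{R}^d$ smoothly fixes a point, via averaging the action to conjugate into $O(d)$ is false in general, so I would cite the Lefschetz/Smith-theoretic fact directly.

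The main obstacle is $\mathrm{(ii)} \Rightarrow \mathrm{(iii)}$, specifically obtaining a fixed point \emph{inside} $\Z(\pos)$ rather than on the boundary of the tropical compactification. My preferred resolution is the Lefschetz fixed point theorem: $\phi$ acts on $\Z(\pos) \simeq \mathbb{R}^d$ with $\phi^k = \mathrm{id}$, hence on the one-point (or disk) compactification with Lefschetz number $1$, and a finite-order homeomorphism with nonzero Lefschetz number on a disk has a fixed point; then a separate argument — using that $\phi$ preserves the positive structure and permutes the finitely many cluster charts $\psi_v^z$ in an orbit, so that any fixed point has bounded coordinates in some chart — shows the fixed point is interior. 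This interior-ness argument is the part I expect to require the most care, and I would isolate it as a lemma: a periodic $\phi$ permutes a finite set of coordinate charts transitively on each orbit, and averaging the pullback of the standard proper function $\sum (\log X_i)^2$ over the orbit gives a $\phi$-invariant proper function, whose minimum is a fixed point in $\Z(\pos)$.
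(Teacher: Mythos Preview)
Your argument for $(\mathrm{i}) \Rightarrow (\mathrm{ii})$ works but is more elaborate than needed. The paper simply observes that since the finite set $\mathcal{S}(C)$ of supercells of $C$ contains a maximal cell and $\C$ is connected, the simplicial action of $\phi$ on all of $\C$ is already determined by its restriction to $\mathcal{S}(C)$; as $\phi$ permutes this finite set, it has finite order. Your detour through fixing a maximal cell and analysing the resulting monomial map on a seed torus reaches the same conclusion but is unnecessary.

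For $(\mathrm{ii}) \Rightarrow (\mathrm{iii})$ your preferred resolution has a genuine gap. Averaging $\sum (\log X_i)^2$ over $\langle \phi \rangle$ does yield a $\phi$-invariant proper function $\bar f$ on $\Z(\pos)$, and its minimum is attained on a nonempty compact $\phi$-invariant set $M$; but nothing forces $M$ to be a single point, since the pullbacks $f \circ \phi^j$ are not convex in the log-coordinates of any fixed chart (cluster transformations are nonlinear there). A finite-order homeomorphism of an arbitrary compact set need not have a fixed point, so your argument stalls exactly at the hard step. The paper instead applies Brouwer to the disk $\overline{\Z}$, supposes all fixed points lie on the boundary, and collapses $\partial\overline{\Z}$ to a point to obtain a finite-order homeomorphism $\tilde\phi$ of $S^N$ whose only fixed point is the collapse point. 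Brown's theorem (for finite-order $s$ one has $\mathrm{Lef}(s)=\chi(\mathrm{Fix}(s))$) then gives a contradiction: the Lefschetz number of any homeomorphism of $S^N$ is $1+(-1)^N(\pm 1)$, hence even, while $\chi(\{*\})=1$. This parity argument on the quotient sphere is the idea your proposal is missing; you came close when mentioning the Lefschetz number on $\mathbb{R}^d$, but never passed to the sphere where the computation becomes decisive.
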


\begin{rem}
The converse assertion $\mathrm{(iii)} \Rightarrow (\mathrm{ii})$ holds under the  condition (T1) on the seed. See \cref{condition(T1)}.
\end{rem}

\begin{proof}

$(\mathrm{i}) \Rightarrow (\mathrm{ii})$.
Suppose we have $\phi(C)=C$ for some cell $C \in \C$ of finite type. Then from the definition, the set of supercells of $C$ is a finite set, and $\phi$ preserves this set. Since this set contains a maximal dimensional cell and the cluster complex $\C$ is connected, the action of $\phi$ on $\C$ is determined by the action on this finite set. Hence $\phi$ has finite order.

$(\mathrm{ii}) \Rightarrow (\mathrm{iii})$.
The proof is purely topological. Assume that $\phi$ has finite order. By Brouwer's fixed point theorem, $\phi$ has a fixed point on the disk $\overline{\Z} \approx D^N$. We need to show that there exists a fixed point in the interior $\Z(\pos)$. Suppose $\phi$ has no fixed points in the interior. Then $\phi$ induces a homeomorphism $\tilde{\phi}$ on the sphere $S^N=D^N \slash \partial D^N$ obtained by collapsing the boundary to a point, and $\tilde\phi$ has no fixed points other than the point corresponding to the image of $\partial D^N$. Now we use the following theorem.

\begin{thm}[Brown ~\cite{Brown82}Theorem 5.1]\label{Brown}
Let $X$ be a paracompact space of finite cohomological dimension, $s$ a homeomorphism of $X$, which has finite order. If $H_{*}(\mathrm{Fix}(s^k); \mathbb{Z})$ is finitely generated for each $k$, then the Lefschetz number of $s$ equals the Euler characteristic of the fixed point set:
\begin{equation}
\mathrm{Lef}(s):=\sum_{i} \mathrm{Tr}(s: H_i(X) \to H_i(X)) =\chi(\mathrm{Fix}(s)). \nonumber
\end{equation}
\end{thm}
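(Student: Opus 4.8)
The plan is to establish $\mathrm{Lef}(s)=\chi(\mathrm{Fix}(s))$ by induction on the order $n$ of $s$, at each step replacing $X$ by the fixed-point set of a prime-order power of $s$ --- on which $s$ acts with strictly smaller order --- and controlling the difference of Lefschetz numbers by an excision-type argument. First I would record that everything is well-posed and may be computed rationally: taking $k$ equal to the order of $s$ gives $\mathrm{Fix}(s^k)=X$, so $H_*(X;\mathbb{Z})$ is finitely generated and $\mathrm{Lef}(s)$ is defined; torsion contributes nothing to traces, so $\mathrm{Lef}(s)$ is the alternating sum of traces of $s$ on $H_*(X;\mathbb{Q})$. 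The base case $n=1$ is the tautology $\mathrm{Lef}(\mathrm{id})=\chi(X)=\chi(\mathrm{Fix}(\mathrm{id}))$.

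For the inductive step, take $n>1$, choose a prime $p\mid n$, and set $t:=s^{n/p}$, a homeomorphism of order $p$ commuting with $s$; put $Y:=\mathrm{Fix}(t)$. Then $Y$ is closed and $s$-invariant (hence again paracompact of finite cohomological dimension), $\mathrm{Fix}(s)\subseteq Y$ so that $s$ restricts to a fixed-point-free map of $X\setminus Y$ and $\mathrm{Fix}(s|_Y)=\mathrm{Fix}(s)$, and --- the point of the construction --- $t=s^{n/p}$ restricts to the identity on $Y$, so $s|_Y$ has order dividing $n/p<n$. Furthermore $(Y,s|_Y)$ satisfies the hypotheses of the theorem: $\mathrm{Fix}\big((s|_Y)^k\big)=\mathrm{Fix}(s^k)\cap\mathrm{Fix}(t)=\mathrm{Fix}\big(s^{\gcd(k,\,n/p)}\big)$ has finitely generated homology for every $k$. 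So the induction hypothesis gives $\mathrm{Lef}(s|_Y)=\chi(\mathrm{Fix}(s|_Y))=\chi(\mathrm{Fix}(s))$, and it remains to prove $\mathrm{Lef}(s;X)=\mathrm{Lef}(s;Y)$.

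For that I would invoke the $s$-equivariant long exact homology sequence of the pair $(X,Y)$ and the Euler-Poincar\'e principle for alternating sums of traces along an exact sequence, which give $\mathrm{Lef}(s;X)=\mathrm{Lef}(s;Y)+\sum_i(-1)^i\mathrm{Tr}\big(s\mid H_i(X,Y;\mathbb{Q})\big)$; and then show the relative term vanishes. Passing to a sufficiently fine triangulation of $X$ on which $\langle s\rangle$ acts simplicially (after barycentric subdivision one may assume $s$ fixes a simplex only by fixing it pointwise), the relative chains $C_i(X,Y;\mathbb{Q})$ acquire a basis that $s$ permutes, each fixed basis element contributing $+1$ to the trace; hence $\mathrm{Tr}\big(s\mid C_i(X,Y;\mathbb{Q})\big)$ counts the $i$-simplices outside $Y$ fixed pointwise by $s$. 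But such a simplex lies in $\mathrm{Fix}(s)\subseteq Y$, so the count is zero in every degree and so is the alternating sum of traces on $H_*(X,Y;\mathbb{Q})$. This yields $\mathrm{Lef}(s;X)=\chi(\mathrm{Fix}(s))$, completing the induction. (The case $n=p$ prime, where $t=s$ and $\mathrm{Lef}(s;Y)=\mathrm{Lef}(s;\mathrm{Fix}(s))=\chi(\mathrm{Fix}(s))$ trivially, is already the crux.)

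The step I expect to be the real obstacle is making the previous paragraph rigorous in the stated generality --- $X$ merely paracompact of finite cohomological dimension, whose open subspaces need not even be paracompact. This forces one to work with a cohomology theory adapted to the hypotheses, such as \v{C}ech or Alexander-Spanier cohomology, and to verify there the exact sequence of the pair, the tautness of the closed subspace $Y$, the additivity of traces, and especially the nerve-level approximation behind the fixed-simplex count --- precisely the technical content of Brown's original argument. One should also use the finiteness hypotheses on the $\mathrm{Fix}(s^k)$ to confirm that all Euler characteristics appearing en route agree with the one in the statement, so that the conclusion is literally $\chi(\mathrm{Fix}(s))$.
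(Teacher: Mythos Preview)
The paper does not prove this theorem at all: it is quoted verbatim from Brown's book \cite{Brown82} and immediately applied to the special case $X=S^N$, $s=\tilde\phi$. There is therefore no proof in the paper to compare your proposal against.

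As for the proposal itself, your inductive scheme (reduce to $Y=\mathrm{Fix}(s^{n/p})$ and show $\mathrm{Lef}(s;X)=\mathrm{Lef}(s;Y)$ via the long exact sequence of the pair) is the right shape, and you have correctly flagged the real issue yourself. The step ``pass to a sufficiently fine triangulation of $X$ on which $\langle s\rangle$ acts simplicially'' is simply unavailable under the stated hypotheses: a paracompact space of finite cohomological dimension need not admit any triangulation, equivariant or otherwise. As written, your argument therefore only proves the result for, say, finite $\langle s\rangle$-CW complexes. Bridging that gap is exactly the content of Brown's original proof, which works with Alexander--Spanier/\v{C}ech cohomology, uses tautness of closed subspaces, and replaces the simplicial trace count by a spectral-sequence or sheaf-theoretic argument; your final paragraph accurately diagnoses this, but does not actually carry it out.
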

Applying Brown's theorem for $X=S^N$ and $s=\tilde\phi$ we get a contradiction, since the Lefschetz number of $\tilde\phi$ is an even number in this case, while the Euler characteristic of a point is $1$. Indeed, the homology is non-trivial only for $i=0$ or $N$, and the trace equals to $\pm 1$ on each of these homology groups. Hence $\phi$ has a fixed point in the interior $\Z(\pos)$.
\end{proof}

To get the converse implication $(\mathrm{iii}) \Rightarrow (\mathrm{ii})$, we need a condition on the seed, which can be thought of an algebraic formulation of the proper discontinuity of the action of the cluster modular group on positive spaces.

\begin{prop}[Growth property $(\mathrm{T}1)$]\label{condition(T1)}
Suppose that a seed $\bi$ satisfies the following condition.
\begin{enumerate}
\item[(T1)]For each vertex $v_0 \in V(\C^\vee)$, $g \in \Z(\pos)$ and a number $M>0$, there exists a number $B>0$ such that $\max_{\alpha \in v} |\log Z_\alpha (g)| \geq M$ for all vertices $v \in V(\C^\vee)$ such that $[v]=[v_0]$ and $d_{\C^\vee}(v, v_0) \geq B$.
\end{enumerate}
Then the conditions $(\mathrm{ii})$ and $(\mathrm{iii})$ in \cref{periodic} are equivalent. Here $d_{\C^\vee}$ denotes the graph metric on the 1-skeleton of $\C^\vee$.
\end{prop}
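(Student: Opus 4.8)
Since the implication $(\mathrm{ii})\Rightarrow(\mathrm{iii})$ is already contained in \cref{periodic}, only the converse $(\mathrm{iii})\Rightarrow(\mathrm{ii})$ requires (T1); the condition should be thought of as an algebraic surrogate for proper discontinuity of the $\Gamma$-action on $\Z(\pos)$. The plan is to convert a fixed point $g\in\Z(\pos)$ of $\phi$ into information about the $\phi$-orbit on the dual graph $\C^\vee$. Fix a base vertex $v_0\in V(\C^\vee)$ with $[v_0]=\bi$ and set $v_k:=\phi^k(v_0)$ for $k\in\mathbb{Z}$, using the simplicial action of \cref{action on cluster complex}. Two observations are needed at the outset: first, since $\phi$ is an automorphism of $\bi$ in the coordinate groupoid, one has $[v_k]=[v_0]$ for all $k$, so the $v_k$ are legitimate inputs for (T1); second, $\C^\vee$ is $n$-regular, hence locally finite, so balls around $v_0$ are finite. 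If the orbit $\{v_k\}_{k\in\mathbb{Z}}$ happens to be finite, then $\phi^m(v_0)=v_0$ for some $m\geq1$, and since a vertex of $\C^\vee$ records a seed together with a \emph{labeled} cluster, an element of $\Gamma$ fixing $v_0$ induces the identity $\A$- and $\X$-transformations in the $v_0$-chart and is therefore trivial; thus $\phi^m=\mathrm{id}$ and $\phi$ is periodic. Note that this case uses neither (T1) nor the fixed point.

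It remains to rule out an infinite orbit. Since the orbit is infinite and every ball of $\C^\vee$ is finite, I may pass to a subsequence with $d_{\C^\vee}(v_{k_j},v_0)\to\infty$. This is where the fixed point is used. The cluster coordinate $Z_\alpha$ is a well-defined function on $\Z(\pos)$ of the coordinate label $\alpha$ alone, and the two $\Gamma$-actions — on $\Z(\pos)$ and on the set of labels — satisfy $Z_{\phi(\alpha)}(\phi(h))=Z_\alpha(h)$ for all $h\in\Z(\pos)$. Combining this with $\phi(g)=g$ and $v_{k_j}=\phi^{k_j}(v_0)$, the finite family $\{\,Z_\alpha(g):\alpha\in v_{k_j}\,\}$ coincides with $\{\,Z_\alpha(g):\alpha\in v_0\,\}$, so
\[
\max_{\alpha\in v_{k_j}}\,\bigl|\log Z_\alpha(g)\bigr| \;=\; \max_{\alpha\in v_0}\,\bigl|\log Z_\alpha(g)\bigr| \;=:\; M_0 \;<\;\infty
\]
for every $j$. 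On the other hand, (T1) applied with the base vertex $v_0$, the point $g$, and $M:=M_0+1$ supplies a bound $B>0$ such that $\max_{\alpha\in v}|\log Z_\alpha(g)|\geq M_0+1$ whenever $[v]=[v_0]$ and $d_{\C^\vee}(v,v_0)\geq B$. Taking $v=v_{k_j}$ with $j$ large enough then contradicts the displayed identity, so the orbit must be finite and $\phi$ periodic.

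I expect the only genuine work to lie in the equivariance bookkeeping of the second paragraph: one has to unwind \cref{cluster complex} and \cref{action on cluster complex} to verify that $Z_\alpha$ depends only on the vertex (resp. co-oriented facet) $\alpha$ of $\C$ and not on the ambient cluster, that the $\Gamma$-action on $\Z(\pos)$ and the $\Gamma$-action on coordinate labels intertwine as claimed, and hence that the multiset of coordinate values along the $\phi$-orbit of $v_0$ is genuinely constant. Everything else — local finiteness and $n$-regularity of $\C^\vee$, freeness of the $\Gamma$-action on $V(\C^\vee)$, and the identity $[\phi^k(v_0)]=[v_0]$ — follows directly from the definitions of \cref{section: definition}.
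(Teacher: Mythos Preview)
Your argument is essentially the paper's, reorganized as the direct implication (fixed point $\Rightarrow$ finite order) instead of the contrapositive (infinite order $\Rightarrow$ every orbit in $\Z(\pos)$ diverges). Both rest on the same equivariance identity $Z_{\phi^{-1}(\alpha)}(g)=Z_\alpha(\phi(g))$ together with (T1) and local finiteness of $\C^\vee$.

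There is one small inaccuracy. You assert that the $\Gamma$-action on $V(\C^\vee)$ is free because ``a vertex of $\C^\vee$ records a seed together with a \emph{labeled} cluster''. This is not quite right: $\C^\vee$ is the dual graph of the simplicial complex $\C$, so a vertex of $\C^\vee$ is a maximal simplex of $\C$, i.e.\ an \emph{unordered} cluster. A nontrivial seed isomorphism (for instance the cyclic permutation of the Markov quiver) fixes the initial cluster as a set while permuting its vertices, so the stabilizer of $v_0$ need not be trivial. What \emph{is} true is that this stabilizer embeds into the symmetric group on the $n$ vertices of the cluster, since an element of $\Gamma$ fixing a cluster pointwise acts as the identity in the corresponding chart and is therefore trivial. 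Hence from $\phi^m(v_0)=v_0$ you only get $\phi^{m\cdot n!}=\mathrm{id}$, which is still enough for periodicity. The paper's proof is equally terse at the parallel step (``since $\phi$ has infinite order'' is used to conclude $d_{\C^\vee}(\phi^{-m}(v_0),v_0)\to\infty$), so this is a common elision rather than a divergence in method.
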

Roughly speaking, the condition (T1) says that the values of the cluster coordinates evaluated at a point $g$ diverge as we perform a sequence of mutations which increase the distance $d_{\C^\vee}$.

\begin{proof}
Let $\phi \in \Gamma_{|\bi|}$ be an element of infinite order. We need to show that $\phi$ has no fixed points in $\Z(\pos)$. It suffices to show that each orbit is divergent. Let $g \in \Z(\pos)$ and $K \subset \Z(\pos)$ a compact set. We claim that there exists a number $M$ such that $\phi^m(g) \notin K$ for all $m \geq M$. Take a number $L>0$ so that $L > \max_{i=1,\dots,N}\max_{g \in K}|\log Z_i(g)|$.

Note that since the 1-skeleton of $\C^\vee$ has valency $n$ at any vertex, the graph metric $d_{\C^\vee}$ is proper. Namely, the number of vertices $v$ such that $d_{\C^\vee}(v, v_0) \leq B$ is finite for any $B >0$. Hence for the number $B>0$ given by the assumption (T1), there exists a number $M$ such that $d_{\C^\vee}(\phi^{-m}(v_0), v_0) \geq B$ for all $m \geq M$, since $\phi$ has infinite order. Also note that $[\phi^{-m}(v_0)]=[v_0]$ by \cref{action on cluster complex}. Then we have \[
\max_{i=1, \dots, N}|\log Z_i(\phi^m(g)|= \max_{\alpha \in \phi^{-m}(v_0)} |\log Z_\alpha(g)| \geq L
\]
for all $m \geq M$, where $(Z_1, \dots, Z_N)$ is the coordinate system associated with the vertex $v_0$. Here we used the equivariance of the coordinates $Z_{\phi^{-1}(\alpha)}(g)=Z_\alpha(\phi(g))$. Hence we have $\phi^m(g) \not\in K$ for all $m\geq M$.
\end{proof}

\begin{prop}\label{growth}
Assume that the cluster modular group $\Gamma_{|\bi|}$ acts on $\Z_{|\bi|}(\pos)$ proper discontinuously. Then the condition $(T1)$ holds.
\end{prop}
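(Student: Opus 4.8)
The plan is to convert the combinatorial condition $(\mathrm{T}1)$ --- which controls the cluster coordinates of the \emph{fixed} point $g$ as the \emph{cell} $v$ moves off to infinity in $\C^\vee$ --- into a statement about how the $\Gamma$-orbit of the \emph{single} point $g$ behaves in $\Z(\pos)$, where proper discontinuity applies directly. Fix $v_0 \in V(\C^\vee)$, $g \in \Z(\pos)$ and $M > 0$. The first step is to parametrize the relevant vertices: recall from the construction of the cluster complex and of $\Gamma$ in \cref{section: definition} (cf.\ \cref{action on cluster complex}) that $\Gamma$ acts transitively on the fibre $V_0 := \{\, v \in V(\C^\vee) : [v] = [v_0] \,\}$, so we may choose for each $v \in V_0$ an element $\phi_v \in \Gamma$ with $\phi_v(v_0) = v$.

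The second step is to rewrite the quantity occurring in $(\mathrm{T}1)$ using the equivariance $Z_{\phi^{-1}(\alpha)}(h) = Z_\alpha(\phi(h))$ of the cluster coordinates. Since the vertices of the cluster $v = \phi_v(v_0)$ are exactly the $\phi_v(\beta)$ with $\beta \in v_0$, we get $\{\, Z_\alpha(g) : \alpha \in v \,\} = \{\, Z_\beta(\phi_v^{-1}(g)) : \beta \in v_0 \,\}$ and hence
\[
\max_{\alpha \in v} |\log Z_\alpha(g)| \;=\; \max_{\beta \in v_0} |\log Z_\beta(\phi_v^{-1}(g))|.
\]
Thus $\max_{\alpha \in v}|\log Z_\alpha(g)| < M$ holds precisely when $\phi_v^{-1}(g) \in K_M$, where $K_M \subset \Z(\pos)$ is the preimage of the closed box $[e^{-M},e^{M}]^n$ under the coordinate chart at $v_0$. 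For $\Z = \X$ this chart is a homeomorphism onto $\mathbb{R}^n_{>0}$, so $K_M$ is compact. For $\Z = \A$ the cluster coordinates at $v_0$ bound only the $n$ mutable directions, so one replaces $K_M$ by its intersection with the level set $\{\, h : A_j(h) = A_j(g) \text{ for all } j \in I_0 \,\}$ of the frozen coordinates; this remains compact, and since the frozen $\A$-variables are fixed by mutations and by seed isomorphisms --- hence are $\Gamma$-invariant functions --- one still has $\phi_v^{-1}(g) \in K_M$ whenever the mutable coordinates along $v$ are bounded by $M$.

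Now the hypothesis enters. Since $\Gamma$ acts properly discontinuously on $\Z(\pos)$, applying this to the compact sets $K_M$ and $\{g\}$ shows that
\[
\Phi \;:=\; \{\, \phi \in \Gamma : \phi(K_M) \cap \{g\} \neq \emptyset \,\} \;=\; \{\, \phi \in \Gamma : \phi^{-1}(g) \in K_M \,\}
\]
is finite. By the previous step every $v \in V_0$ with $\max_{\alpha \in v}|\log Z_\alpha(g)| < M$ satisfies $\phi_v \in \Phi$, and so lies in the finite set $\{\, \phi(v_0) : \phi \in \Phi \,\}$. Consequently $B := 1 + \max\{\, d_{\C^\vee}(\phi(v_0), v_0) : \phi \in \Phi \,\}$ (and $B := 1$ if $\Phi = \emptyset$) is a number with the property required by $(\mathrm{T}1)$: any $v \in V_0$ with $d_{\C^\vee}(v, v_0) \geq B$ lies outside $\{\phi(v_0) : \phi \in \Phi\}$, hence $\max_{\alpha \in v}|\log Z_\alpha(g)| \geq M$.

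There is no serious obstacle beyond bookkeeping. The two points that need care are the identification of the fibre $V_0$ with a $\Gamma$-orbit --- this is what lets one replace ``the cell $v$ recedes in $\C^\vee$'' by ``$\phi_v^{-1}(g)$ leaves every compact subset of $\Z(\pos)$'' --- and, for the $\A$-space, folding in the automatically $\Gamma$-invariant frozen coordinates so that the relevant sublevel set $K_M$ is genuinely compact. Once these are in place the conclusion is immediate from proper discontinuity; note that the argument in fact uses only the instance of proper discontinuity in which one of the two compact sets is a single point.
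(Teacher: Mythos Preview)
Your argument is correct and follows essentially the same route as the paper: both use that $\Gamma$ acts transitively on $V_0=\{v:[v]=[v_0]\}$ to translate the coordinate bound at a distant $v$ into membership of $\phi_v^{-1}(g)$ (equivalently $\psi_m(g)$) in a fixed compact set, and then invoke proper discontinuity. The only differences are cosmetic --- you argue directly while the paper argues by contraposition --- except that you are more explicit than the paper about why the sublevel set $K_M$ is compact in the $\A$-case, correctly folding in the $\Gamma$-invariant frozen coordinates.
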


\begin{proof}
Suppose that the condition (T1) does not hold. Then there exists a vertex $v_0 \in V(\C^\vee)$, a point $g \in \Z(\pos)$, a number $M>0$, and a sequence $(v_m) \subset V(\C^\vee)$ such that $[v_m]=[v_0]$, $d_{\C^\vee}(v_m, v_0) \geq m$ and $\max_{\alpha \in v_m} |\log Z_\alpha (g)| \leq M$. Take a mapping class $\psi_m \in \Gamma$ so that $\psi_m(v_m)=v_0$. It is possible since $[v_m]=[v_0]$. Then we have
\[
\max_{i=1, \dots, N}|\log Z_i(\psi_m(g)|= \max_{\alpha \in \psi_m^{-1}(v_m)} |\log Z_\alpha(g)| \leq M,
\]
which implies that there exists a compact set $K \subset \Z(\pos)$ such that $\psi_m(g) \in K$ for all $n$. Note that the mapping classes $(\psi_m)$ are distinct, since the vertices $(v_m)$ are distinct. In particular we have $\psi_m^{-1}(K)\cap K \neq \emptyset$ for all $m$, consequently the action is not properly discontinuous.
\end{proof}

We will verify the condition (T1) for a seed associated with a triangulated surface using \cref{growth} in \cref{Teich are Teich}, and for the simplest case $L_k$ ($k \geq 2$) of infinite type in \cref{examples}. 

\begin{ex}\label{example: periodic}\ {}
\begin{enumerate}
\item (Type $A_2$). Let ${\bi}$ be the seed of type $A_2$ and $\phi=(0\ 1)\circ \mu_0 \in \Gamma_{A_2}$ the generator. See \cref{example: cluster modular group}. Recall that the two actions on the positive spaces $\A(\pos)$ and $\X(\pos)$ are described as follows:
\begin{align*}
\phi^*(A_0, A_1)&=\left(A_1,\frac{1+A_1}{A_0}\right), \\
\phi^*(X_0, X_1)&=(X_1(1+X_0), X_0^{-1}).
\end{align*}
The fixed points are given by $(A_0,A_1)=((1+\sqrt{5})/2, (1+\sqrt{5})/2)$ and $(X_0,X_1)=((1+\sqrt{5})/2, (-1+\sqrt{5})/2)$, respectively.

\item (Type $L_k$ for $k \geq 2$). Let $\bi$ be the seed of type $L_k$ and $\phi=(0\ 1)\circ \mu_0 \in \Gamma_{L_k}$ the generator. See \cref{example: cluster modular group}. Recall that the two actions on the positive spaces $\A(\pos)$ and $\X(\pos)$ are described as follows:
\begin{align*}
\phi^*(A_0, A_1)&=\left(A_1,\frac{1+A_1^k}{A_0}\right), \\
\phi^*(X_0, X_1)&=(X_1(1+X_0)^k, X_0^{-1}).
\end{align*}
These equations have no positive solutions. Indeed, the $\X$-equation implies $X_0^2=(1+X_0)^k$, which has no positive solution since $\begin{pmatrix}k \\ 2 \end{pmatrix} \geq 1$ for $k \geq 2$.
Similarly for $\A$-variables. Hence we can conclude that $\phi$ has infinite order by \cref{periodic}.  In particular we have $\Gamma_{L_k} \cong \mathbb{Z}$.

\end{enumerate}
\end{ex}

\subsection{Cluster-reducible classes}
In this subsection, we study the fixed point property of a cluster-reducible class. Before proceeding, let us mention the basic idea behind the constructions. Consider the seed associated with an ideal triangulation of a marked hyperbolic surface $F$. Here we assume $F$ is a closed surface with exactly one puncture or a compact surface without punctures (with marked points on its boundary). Then the vertices of the cluster complex $\C$ are represented by \emph{ideal arcs} on $F$. See \cref{Arc}. In particular each point in the geometric realization $|\C|$ of the cluster complex is represented by the projective class of a linear combination of ideal arcs. On the other hand, the Fock-Goncharov boundary $P\X(\trop)$, which is identified with the space of \emph{measured laminations} on $F$, contains all such projective classes. Hence the cluster complex is embedded into the Fock-Goncharov boundary of the $\X$-space in this case. In \cref{subsub: redX} we show that this picture is valid for a general seed satisfying some conditions. See \cref{isomorphism}. 

\subsubsection{Fixed points in the tropical $\X$-space}\label{subsub: redX}

\begin{dfn}[the non-negative part]
Let $\bi$ be a seed.
For each vertex $v \in V(\C^\vee)$, let $K_{v}:=\{ L \in \X(\trop) | L \geq 0\text{ in } v\}$ be a cone in the tropical space, where $L \geq 0\text{ in } v$ means that $x_\alpha(L)\geq 0$ for all $\alpha  \in v$. Then the union $\X(\trop)_+:= \bigcup_{v \in V(\C^\vee)} K_v \subseteq \X(\trop)$ is called the \emph{non-negative part} of $\X(\trop)$.

\end{dfn}

Let us define a $\Gamma$-equivariant map $\Psi: \C \to P\X(\trop)_+$ as follows. The construction contains reformulations of some conjectures stated in ~\cite{FG09} Section 5, for later use.
For each maximal simplex $S$ of $\bS$, let $[S]$ denote the image of $S$ under the projection $\bS \to \C$, and let $v \in V(\C^\vee)$ be the dual vertex of $[S]$. By using the barycentric coordinate of the simplex $S$, we get an identification $S\cong P\mathbb{R}^{n}_{\geq 0}$. Then we have the following map:
\[\xymatrix{
\Psi_S: S \cong P\mathbb{R}^{n}_{\geq 0} \ar[r]^{\xi_v^{-1}} & PK_v \subseteq P\X(\trop)_+,
}
\]
where $\xi_v:=\Trop(\psi_v^x): \X(\trop) \to \mathbb{R}^n $ is the tropical coordinate associated with the vertex $v$, whose restriction gives a bijection $K_v \to \mathbb{R}_{\geq 0}^n$. Since the tropical $\X$-transformation associated to a mutation $\mu_k: v \to v'$ preserves the set $\{ x_k=0\}$ and the dual graph $\bS^{\vee}$ is a tree, these maps combine to give a map 
\[
\Psi:=\bigcup_{v \in V(\C^\vee)} \Psi_v: \bS \to P\X(\trop)_+,
\]
which is clearly surjective. 
Assume we have $S'=\gamma(S)$ for some $\gamma \in \Delta$. Then from the definition of $\Delta$, $\gamma$ preserves all the tropical $\X$-coordinates. Hence we have $\Psi_{v'}(\gamma x)=\Psi_v(x)$ for all $x \in S$, and the map descends to
\[
\Psi : \C=\bS \slash \Delta \to P\X(\trop)_+.
\]

\begin{lem}\label{Psi equivariance}
The surjective map $\Psi$ defined above is $\Gamma$-equivariant.
\end{lem}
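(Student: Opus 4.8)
The plan is to prove the equivariance upstairs, on $\bS$. The $\Gamma=D/\Delta$ action on $\C=\bS/\Delta$ is induced by the $D$-action on $\bS$ (\cref{action on cluster complex}), and $\Psi$ is by construction the descent of a map $\Psi_{\bS}\colon\bS\to P\X(\trop)_+$, so it will suffice to show $\Psi_{\bS}\circ\gamma=\phi\circ\Psi_{\bS}$ on $\bS$ for each $\gamma\in D$ with image $\phi\in\Gamma$. Both sides are continuous and $\bS$ is covered by its maximal simplices, so I would fix one maximal simplex $S$, let $\tilde v\in V(\bS^\vee)$ be its dual vertex, and let $v$ and $w$ be the images in $V(\C^\vee)$ of $\tilde v$ and $\gamma\tilde v$; by the definition of the $\Gamma$-action on $V(\C^\vee)$ one has $w=\phi\cdot v$, and a short check shows this does not depend on the chosen lifts.

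Next I would unwind $\Psi_{\bS}$ on $S$ and on $\gamma S$. By its construction, $\Psi_{\bS}|_S=\xi_v^{-1}\circ\beta_v$, where $\beta_v\colon S\xrightarrow{\sim}P\mathbb{R}^n_{\ge 0}$ is the barycentric identification read off from the decoration of $S$ and $\xi_v=\Trop(\psi_v^x)$ restricts to a PL homeomorphism $K_v\xrightarrow{\sim}\mathbb{R}^n_{\ge 0}$; similarly $\Psi_{\bS}|_{\gamma S}=\xi_w^{-1}\circ\beta_w$. The automorphism $\gamma$ carries $S$ affinely onto $\gamma S$, permuting facets, and through the two decorations this becomes a bijection $\pi$ of $I-I_0$ with $\beta_w\circ\gamma=P_\pi\circ\beta_v$, where $P_\pi$ is the associated coordinate permutation of $\mathbb{R}^n$. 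Substituting and using that $\beta_v$ is a bijection onto $P\mathbb{R}^n_{\ge 0}$, the desired identity $\Psi_{\bS}(\gamma x)=\phi(\Psi_{\bS}(x))$, $x\in S$, reduces to the chart-compatibility statement
\[
\xi_{\phi\cdot v}\circ\phi=P_\pi\circ\xi_v\qquad\text{on }\X(\trop).
\]
As a by-product, this identity also gives $\phi(K_v)=K_{\phi\cdot v}$, so that $\X(\trop)_+$ is $\Gamma$-invariant and the action on $P\X(\trop)_+$ appearing in the statement is well defined.

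It then remains to prove the displayed identity, which is where essentially all of the work goes. By the definition of the $\Gamma$-action on the $\X$-space (\cref{action}), the transformation $\phi$ read in the chart $\psi_v^x$ is a composite of the cluster $\X$-transformations along a mutation path in $\bS^\vee$ from $\tilde v$ to $\gamma\tilde v$ together with the coordinate permutation coming from the seed-isomorphism part of $\phi$; tropicalizing, and using that the chart transitions along the edges of the tree $\bS^\vee$ are precisely the tropical mutations, the mutation part of this composite is the transition map $\xi_{\phi\cdot v}\circ\xi_v^{-1}$, while the hypothesis $\gamma\in D$ --- i.e.\ \cref{action on cluster complex} --- forces the seed-isomorphism permutation to be exactly the decoration bijection $\pi$. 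Putting these together yields $\xi_{\phi\cdot v}\circ\phi=P_\pi\circ\xi_v$. The main obstacle is entirely organizational: keeping the several relabelings in step --- the decoration bijections at $v$ and at $\phi\cdot v$, the seed isomorphism $\sigma$ entering $\phi$, and the orientation conventions for the $\Gamma$-actions on $V(\C^\vee)$ and on $\X(\trop)$ (which govern whether one writes $\phi$ or $\phi^{-1}$, $\pi$ or $\pi^{-1}$) --- but no new idea is needed, since on each simplex $\Psi$ is assembled from two identifications that are both natural in the decoration data.
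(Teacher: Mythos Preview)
Your proposal is correct and follows essentially the same route as the paper: both reduce the equivariance to the commutativity of a square relating the barycentric identification $S\cong P\mathbb{R}^n_{\ge 0}$ and the tropical chart $\xi_v^{-1}$ with their counterparts at the translated simplex. The paper simply draws this diagram and declares it commutative, whereas you actually unpack why it commutes (tropicalizing the action in \cref{action} and matching the seed-isomorphism permutation with the decoration bijection). One cosmetic discrepancy: the paper labels the lower chart $\xi_{\phi^{-1}(v)}^{-1}$ while you write $\xi_{\phi\cdot v}^{-1}$; this is exactly the orientation-convention ambiguity you flag in your last paragraph, not a mathematical disagreement.
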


\begin{proof}
It follows from the following commutative diagram for $\phi \in \Gamma$: 
\[
\xymatrix{
S \ar[d]_{\phi} \ar[r]^{\cong}  & P\mathbb{R}^n_{ \geq 0}  \ar[d]^{\phi^*} \ar[r]^{\xi_v^{-1}} & K_v \ar[d]^{\phi^x} \\
\phi(S) \ar[r]^{\cong} & P\mathbb{R}^n_{ \geq 0}  \ar[r]^{\xi_{\phi^{-1}(v)}^{-1}} & PK_v \\
}
\]
Here $v$ is the dual vertex of $[S]=[\phi(S)]$, $\phi^*$ is the permutation on vertices induced by $\phi$, and $\phi^x$ is the induced tropical $\X$-transformation on $\X(\trop)$.
\end{proof}

Next we introduce a sufficient condition for $\Psi$ being injective.
For a point $L \in \X(\trop)$, a cluster $C$ in $\C$ is called a \emph{non-negative cluster} for $L$ if $L \in K_v$, where $v \in V(\C^\vee)$ is the dual vertex of $C$. The  subset $Z(L):=\{ \alpha\in V(C) \mid \xi_v(L; \alpha)=0\} \subset V(C)$ is called the \emph{zero subcluster} of $L$. Here $\xi_v(-; \alpha)$ denotes the component of the chart $\xi_v$ corresponding to the vertex $\alpha$. Since the mutation directed to a vertex $k \in Z(L)$ preserves the signs of coordinates, the cluster $\mu_k(C)$ inherits the zero subcluster $Z(L)$. Two non-negative clusters $C$ and $C'$ are called \emph{$Z(L)$-equivalent} if they are connected by a finite sequence of mutations directed to the vertices in $Z(L)$.

\begin{lem}\label{isomorphism}
Assume that a seed $\bi$ satisfies the following condition:
\begin{enumerate}
\item[(T2)] For each $L \in \X(\trop)_+$, any two non-negative clusters for $L$ are $Z(L)$-equivalent.
\end{enumerate}
Then the map $\Psi: \C \to P\X(\trop)_+$ is a $\Gamma$-equivariant isomorphism.
\end{lem}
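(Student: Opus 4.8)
The plan is to show that $\Psi$ is a bijective simplicial map, from which the $\Gamma$-equivariant isomorphism statement follows by Lemma \ref{Psi equivariance}. Surjectivity was already established during the construction of $\Psi$, so the whole content is injectivity, which I expect to be where condition (T2) does the decisive work. First I would set up the combinatorial dictionary: a point of $|\C|$ is represented by a pair $(S,\beta)$, where $S$ is a maximal simplex of $\bS$ (equivalently a cluster, equivalently a vertex $v\in V(\C^\vee)$) and $\beta\in P\mathbb{R}^n_{\geq 0}$ is a barycentric coordinate vector; the face of $\C$ actually containing the point is the one spanned by those vertices $\alpha\in V(C)$ with $\beta_\alpha>0$. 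Two such pairs $(S,\beta)$ and $(S',\beta')$ represent the same point of $|\C|$ precisely when, after passing to $\bS$, there is $\gamma\in\Delta$ with $\gamma S=S'$ intertwining the barycentric data — but it is cleaner to phrase this intrinsically in $\C$: they represent the same point iff the faces they determine, with their barycentric weights, agree.

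Next I would translate the equality $\Psi_S(\beta)=\Psi_{S'}(\beta')=:L$ in $P\X(\trop)_+$ into the language of non-negative clusters. By construction $\Psi_S(\beta)=[\xi_v^{-1}(\tilde\beta)]$ for a representative $\tilde\beta\in\mathbb{R}^n_{\geq 0}$ of $\beta$, so $L\in PK_v$ and $L\in PK_{v'}$: both $C$ (dual to $v$) and $C'$ (dual to $v'$) are non-negative clusters for $L$. Moreover the zero subcluster $Z(L)\subset V(C)$ is exactly the set of vertices $\alpha$ with $\xi_v(L;\alpha)=0$, i.e. with $\beta_\alpha=0$; so the face of $\C$ carrying our point of $|\C|$ is spanned by $V(C)\setminus Z(L)$, the ``support'' of $\beta$, and similarly for $C'$. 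Now invoke (T2): $C$ and $C'$ are $Z(L)$-equivalent, so there is a finite sequence of mutations directed at vertices of $Z(L)$ carrying $C$ to $C'$. The key observation is that a mutation $\mu_k$ with $k\in Z(L)$ not only preserves the non-negativity and the zero subcluster (already noted in the text) but fixes $\xi_w(L;\alpha)$ for every vertex $\alpha$ surviving the mutation — because such a mutation only alters the coordinate at $k$, and that coordinate is $0$ before and after. Hence the barycentric weights on the support are transported unchanged along the sequence, and the identification $\gamma$ of $S$ with $S'$ furnished by this sequence (it lies in $\Delta$, since it preserves all tropical $\X$-coordinates) matches $\beta$ with $\beta'$. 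Therefore $(S,\beta)$ and $(S',\beta')$ descend to the same point of $|\C|$, giving injectivity.

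The main obstacle I anticipate is the bookkeeping in the previous paragraph: making precise that a $Z(L)$-equivalence between non-negative clusters induces an element of $\Delta$ (not merely of $\mathrm{Aut}(\bS)$) and that this element intertwines the two barycentric coordinate vectors on the common support. This hinges on two facts that should be extracted cleanly as sub-claims — (a) along a mutation at $k\in Z(L)$, the tropical $\X$-coordinates indexed by the non-mutated vertices are unchanged when evaluated at $L$ (immediate from the tropicalization of the $\X$-mutation formula, since the relevant correction terms involve $X_k^{\pm}$ whose tropical value at $L$ vanishes), and (b) the sequence of such mutations, read as an automorphism of $\bS$, preserves every cluster variable and so lies in $\Delta$. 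Once (a) and (b) are in place, the equality $\Psi_S(\beta)=\Psi_{S'}(\beta')$ forces $\beta=\beta'$ after the identification, and the proof closes. A minor additional point to check is that $\Psi$ is simplicial — that it sends the open face of $S$ with support $T\subseteq V(C)$ onto the relatively open cone $PK_v\cap\{x_\alpha=0:\alpha\in Z(L)\}$ bijectively onto a single open face's worth of $P\X(\trop)_+$ — but this is built into the barycentric/tropical-coordinate identification $\xi_v\colon K_v\xrightarrow{\sim}\mathbb{R}^n_{\geq 0}$ and needs only to be recorded.
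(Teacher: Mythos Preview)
Your overall strategy is the paper's: reduce to injectivity, note that $\Psi(x)=\Psi(x')=[L]$ makes both clusters $C,C'$ non-negative for $L$, invoke (T2) to get a $Z(L)$-equivalence, and use claim (a) to track barycentric data along it. Claim (a) is correct and is the key observation.

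The gap is claim (b). A sequence of mutations is a path in the dual graph $\bS^\vee$, not an automorphism of $\bS$; there is no sensible way to ``read it as an automorphism.'' Nor does such a sequence preserve ``every cluster variable'': the $\A$-coordinates at the mutated vertices genuinely change, and membership in $\Delta$ requires preserving both $\A$- and $\X$-variables as functions. Your parenthetical justification only says that the tropical $\X$-coordinates \emph{of the single point $L$} are unchanged along the path, which is far weaker. So (b) fails as written, and the attempt to produce a $\gamma\in\Delta$ with $\gamma S=S'$ does not go through.

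Fortunately (b) is unnecessary. Argue directly in $\C$, as you yourself note is cleaner. By claim (a), a mutation at $k\in Z(L)$ leaves untouched the vertices of $C$ with positive $L$-coordinate --- they survive as the same vertices of $\C$, with the same coordinate values. Iterating along the $Z(L)$-equivalence, the face of $C$ spanned by $V(C)\setminus Z(L)$ coincides, as a face of $\C$, with the corresponding face of $C'$. Both $x$ and $x'$ lie in this common face with identical barycentric weights, and $\Psi$ is injective on each simplex, so $x=x'$. This is precisely how the paper concludes (there is an apparent slip in the paper's text, which writes ``spanned by the vertices in $Z(L)$'' where the complement is meant).
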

Compare the condition $(\mathrm{T2})$ with Conjecture 5.10 in ~\cite{FG09}.

\begin{proof}
We need to prove the injectivity of $\Psi$. Note that $\Psi$ is injective on each simplex. Also note that, by the construction of the map $\Psi$, a point $[L]=\Psi(x)$ ($x \in C$) satisfies $Z(L) \neq \emptyset$ if and only if $x$ is contained in the boundary of the simplex $C$.  

Assume that $C$, $C'$  are distinct clusters, $x \in C$, $x' \in C'$ and $\Psi(x)=\Psi(x')=: [L] \in P\X(\trop)_+$. If $x$ lies in the interior of the cluster $C$, then $Z(L)= \emptyset$. Then the condition (T2) implies that $C=C'$, which is a contradiction. Hence $Z(L) \neq \emptyset$. Then the condition (T2) implies that $C'$ is $Z(L)$-equivalent to $C$. On the other hand, the point $x$ (resp. $x'$) must be contained in the face of $C$ (resp. $C'$) spanned by the vertices in $Z(L)$. Hence $x, x' \in Z(L) \subset C \cap C'$. In particular $x$ and $x'$ are contained in the same simplex, hence we have $x=x'$. Therefore $\Psi$ is injective.
\end{proof}

\begin{ex}Seeds of finite type satisfy the equivalence property $(\mathrm{T2})$, see ~\cite{FG09} Theorem 5.8.
\end{ex}

\begin{prop}[fixed points in $\X$-space]\label{redX}
Let $\bi$ be a seed, and $\phi \in \Gamma_{|\bi|}$ a mapping class. Then the followings hold.
\begin{enumerate}
\item If $\phi$ is cluster-reducible, then there is a point $L \in \X(\trop)_+ \backslash \{0\}$ such that $\phi[L]=[L]$.
\item If $\bi$ satisfies the condition $(\mathrm{T2})$, then the converse of $(1)$ is also true.
\end{enumerate}
\end{prop}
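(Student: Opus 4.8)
The plan is to derive both directions from the $\Gamma$-equivariant surjection $\Psi:\C \to P\X(\trop)_+$ constructed above, using \cref{Psi equivariance} and \cref{isomorphism}, together with a standard barycentric fixed-point argument on the simplicial complex $\C$.

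For part (1), suppose $\phi$ is cluster-reducible, so $\phi$ fixes a point $y$ in the geometric realization $|\C|$. The point $y$ lies in the relative interior of a unique cell $C$ of $\C$; since $\phi$ is simplicial and fixes $y$, it must fix $C$ setwise, and moreover it fixes $y$, which is a convex combination of the vertices of $C$ with strictly positive barycentric coordinates. (If $y$ happens to be the barycenter of $C$ we get this for free; in general one replaces $y$ by the barycenter of the minimal cell containing it, which is still fixed because $\phi$ permutes the vertices of that cell and fixes the cell.) Now set $[L]:=\Psi(y)\in P\X(\trop)_+$. By \cref{Psi equivariance}, $\phi[L]=\phi\Psi(y)=\Psi(\phi y)=\Psi(y)=[L]$. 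Choosing any representative $L\in\X(\trop)_+\setminus\{0\}$ of this projective class (the cone $K_v$ for the dual vertex $v$ of $C$ is nonzero), we get $\phi[L]=[L]$ as desired. I should be slightly careful that $\Psi(y)$ is genuinely a point of $P\X(\trop)_+$ and not of the "cone point" $\{0\}$: since $y\in|\C|$ comes from a point of the simplex $S$ with not-all-zero barycentric coordinates, its image under $\xi_v^{-1}$ lies in $PK_v\subseteq P\X(\trop)_+$, so this is automatic.

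For part (2), assume $\bi$ satisfies (T2). By \cref{isomorphism}, $\Psi:\C\to P\X(\trop)_+$ is a $\Gamma$-equivariant isomorphism (of simplicial complexes, hence a $\Gamma$-equivariant homeomorphism on geometric realizations $|\C|\xrightarrow{\sim}P\X(\trop)_+$). Given $L\in\X(\trop)_+$ with $\phi[L]=[L]$, the point $[L]\in P\X(\trop)_+$ has a preimage $y:=\Psi^{-1}([L])\in|\C|$, and equivariance gives $\Psi(\phi y)=\phi\Psi(y)=\phi[L]=[L]=\Psi(y)$, so $\phi y=y$ by injectivity of $\Psi$. Thus $\phi$ has a fixed point in $|\C|$, i.e. $\phi$ is cluster-reducible by \cref{dfn; NT types}.

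The main obstacle, such as it is, is purely bookkeeping: making sure that the identification $S\cong P\mathbb{R}^n_{\geq0}$ via barycentric coordinates is compatible with the simplicial structure so that "$\phi$ fixes a point of $|\C|$" really does force $\phi$ to fix the barycenter of some cell (one needs that a simplicial automorphism fixing an interior point of a cell permutes that cell's vertices, which is standard), and that $\Psi$ being an isomorphism of the abstract simplicial complex $\C$ with the polyhedral complex $P\X(\trop)_+$ (whose cells are the projectivized cones $PK_v$) really does upgrade to a homeomorphism of realizations carrying fixed points to fixed points. None of this requires computation; the content is entirely in \cref{Psi equivariance} and \cref{isomorphism}, which are already established, so the proof is short. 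I would also remark explicitly that part (1) needs no hypothesis on $\bi$ beyond the general setup, while part (2) is exactly where (T2) enters, via \cref{isomorphism}.
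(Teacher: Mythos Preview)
Your proof is correct and follows exactly the paper's approach: the paper's own proof is simply the one-line ``The assertions follow from \cref{Psi equivariance} and \cref{isomorphism}, respectively,'' and you have spelled out the details of that same argument. The detour through the barycenter in part (1) is unnecessary (since $\Psi$ is already defined on $|\C|$ and is equivariant, $\Psi(y)$ is fixed whenever $y$ is), but it does no harm.
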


\begin{proof}
The assertions follow from \cref{Psi equivariance} and \cref{isomorphism}, respectively.
\end{proof}

\begin{dfn}[seeds of definite type]
A seed $\bi$ is \emph{of definite type} if $\X_{|\bi|}(\trop)_+ = \X_{|\bi|}(\trop)$.
\end{dfn}

\begin{prop}\label{prop: definite}
Assume that a seed $\bi$ satisfies the equivalence property $(\mathrm{T2})$.
Then $\bi$ is of definite type if and only if it is of finite type.
\end{prop}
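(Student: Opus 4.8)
Since $\bi$ satisfies $(\mathrm{T}2)$, \cref{isomorphism} furnishes a $\Gamma$-equivariant homeomorphism $\Psi\colon|\C|\xrightarrow{\sim}P\X(\trop)_+$, where $|\C|$ carries the weak topology and $P\X(\trop)_+$ the subspace topology from $P\X(\trop)$, which is PL-homeomorphic to $S^{n-1}$ (here $n$ is the mutable rank). Since ``$\bi$ is of definite type'' is equivalent to $P\X(\trop)_+=P\X(\trop)$ and ``$\bi$ is of finite type'' means that $\C$ is a finite simplicial complex, the plan is to prove, under $(\mathrm{T}2)$, that $P\X(\trop)_+=S^{n-1}$ if and only if $\C$ is finite. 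One direction is immediate: if $\C$ is infinite then $|\C|$ fails to be compact (a simplicial complex whose geometric realization is compact has only finitely many simplices, as one sees by choosing one interior point in each maximal simplex to obtain an infinite closed discrete subset), so $P\X(\trop)_+\cong|\C|$ is a proper subset of the compact space $S^{n-1}$ and $\bi$ is not of definite type; contrapositively, definite type implies finite type.

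For the converse I would argue by strong induction on the mutable rank $n$, proving that \emph{every seed of finite type is of definite type}; note that seeds of finite type satisfy $(\mathrm{T}2)$ (cf.\ ~\cite{FG09} Theorem 5.8), so \cref{isomorphism} is available throughout. The cases $n\le 1$ are trivial --- for $n=1$, $\X(\trop)\approx\mathbb{R}$ is the union of the two cones $K_v\approx[0,\infty)$ and $K_{\mu_0 v}\approx(-\infty,0]$. For $n\ge 2$, the set $P\X(\trop)_+\cong|\C|$ is compact, hence a nonempty closed subset of the connected space $S^{n-1}$, so it is enough to show it is also open; it is then clopen and equals $S^{n-1}$. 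Let $[L]\in P\X(\trop)_+$, fix a non-negative cluster $v$ with $L\in K_v$, and set $m:=|Z(L)|\le n-1$. If $m=0$, then $[L]$ lies in the open subset $P(\operatorname{int}K_v)\subseteq P\X(\trop)_+$. If $m\ge 1$, the key point is that mutations directed at the vertices of $Z(L)$ preserve the signs of all $\X$-coordinates and carry $K_v$ to the cone on the opposite side of the wall $\{x_k=0\}$; restricting attention to these mutations produces, on the exchange submatrix $\epsilon|_{Z(L)\times Z(L)}$, the tropical $\X$-dynamics of a seed $\bi_L$ of rank $m<n$, which is again of finite type (a principal submatrix of a finite-type exchange matrix is of finite type). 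Reading a neighborhood of $L$ in the chart $\xi_v$ as a product of (a neighborhood in) the face $\{x_k=0:k\in Z(L)\}$, along which all the remaining coordinates stay positive, with the transverse slice $\mathbb{R}^{Z(L)}$, one identifies the transverse part of $\X(\trop)_+$ near $L$ with $\X_{|\bi_L|}(\trop)_+$; by the inductive hypothesis the latter is all of $\mathbb{R}^{Z(L)}$, so $\X(\trop)_+$, and hence $P\X(\trop)_+$, contains a neighborhood of $[L]$. This establishes openness and closes the induction.

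I expect the local-product step to be the main obstacle: one must verify precisely that, near a point $L$ in a codimension-$m$ face of the cone $K_v$, the tropical $\X$-mutations in the $Z(L)$-directions restrict, on the transverse coordinates, to exactly the rank-$m$ tropical $\X$-mutations of $\bi_L$ --- the shearing that these mutations apply to the non-$Z(L)$ coordinates being inessential because those coordinates remain positive throughout a neighborhood of $L$ --- and that $(\mathrm{T}2)$ genuinely descends to the sub-seed $\bi_L$, which is what makes the induction legitimate. As an alternative route that avoids this, the converse implication can also be deduced from the known completeness of the cluster fan of a finite-type cluster algebra (the family $\{K_v\}$ being, after identifying charts, the normal fan of the associated generalized associahedron), which directly gives $\X(\trop)_+=\X(\trop)$; I would present the self-contained inductive argument and add a remark pointing to this alternative.
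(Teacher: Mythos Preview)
Your argument for ``definite $\Rightarrow$ finite'' is essentially the paper's own: under (T2), $\Psi$ identifies $|\C|$ with $P\X(\trop)_+$; if the seed is definite this is all of $P\X(\trop)\cong S^{n-1}$, hence $|\C|$ is compact and $\C$ is finite. Your contrapositive phrasing and the remark about picking interior points in maximal simplices add nothing new but are fine.

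Where you diverge is in the direction ``finite $\Rightarrow$ definite''. The paper simply cites Fock--Goncharov \cite{FG09} for this (equivalently, completeness of the cluster fan of a finite-type seed), which you yourself mention as an alternative. Your self-contained inductive argument is a genuinely different route and, if carried out, would make the proof independent of that external result. The key points you identify are correct: restricting to mutations in $Z(L)$ gives, on the $Z(L)$-coordinates, exactly the tropical $\X$-dynamics of the principal sub-seed $\bi_L$ (because the mutation formula for $x'_j$ with $j,k\in Z(L)$ involves only $\epsilon_{kj}$, and the mutation of the exchange matrix commutes with restriction to $Z(L)\times Z(L)$); and $\bi_L$ is indeed of finite type, since its cluster complex is the link of the complementary vertices in $\C$. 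The step you flag as ``the main obstacle'' is real but tractable: since $\bi_L$ has a finite exchange graph, the number of $Z(L)$-mutations needed is uniformly bounded, and each such mutation shears the non-$Z(L)$ coordinates by an amount controlled by $\max_{k\in Z(L)}|x_k|$; a Lipschitz estimate over finitely many PL maps then keeps the non-$Z(L)$ coordinates positive for $L'$ sufficiently close to $L$. You should make this quantitative bound explicit rather than leave ``inessential'' as an adjective. With that done, your induction closes and gives a proof that is longer than the paper's citation but more self-contained.
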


\begin{proof}
The fact that finite type seeds are definite is due to Fock-Goncharov ~\cite{FG09}. Let us prove the converse implication. Assume that $\X$ is of definite type. Then by \cref{isomorphism} we have a homeomorphism $\Psi: \C \to P\X(\trop)$,  and the latter is homeomorphic to a sphere. In particular $\C$ is a compact simplicial complex, hence it can possess finitely many cells.
\end{proof}

\begin{rem}
The conclusion part in \cref{prop: definite} is Conjecture 5.7 in ~\cite{FG09}.
\end{rem}

\begin{dfn}[seeds of \Teich type]\label{dfn; Teich type}
A seed $\bi$ is of \Teich type if it satisfies the the growth property $(\mathrm{T1})$ and equivalence property $(\mathrm{T2})$, defined in \cref{condition(T1)} and \cref{isomorphism}, respectively.
\end{dfn}

\begin{ex}\ {}
\begin{enumerate}
\item Seeds of finite type are of \Teich type. See ~\cite{FG09} Theorem 5.8.
\item Seeds associated with triangulated surfaces are of \Teich type. See \cref{Teich are Teich}.
\item The seed of type $L_k$ ($k \geq 1$) is of \Teich type. See \cref{examples}. 
\end{enumerate}
\end{ex}

\begin{cor}\label{cor; cluster-pA}
Let $\bi$ be a seed of \Teich type, and $\phi \in \Gamma$ a cluster-pA class. Then there exists a point $L \in \X(\trop) \backslash \X(\trop)_+$ such that $\phi[L]=[L]$.
\end{cor}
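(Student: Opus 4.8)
The plan is to combine Brouwer's fixed point theorem on the closed disk $\overline{\X}$ with the characterization of cluster-reducible classes given by \cref{redX}. First I would recall that, by the discussion following \cref{dfn; NT types}, the tropical compactification $\overline{\X} = \X(\pos) \sqcup P\X(\trop)$ is homeomorphic to a closed disk $D^n$, and the action of $\phi$ on it extends continuously by the corollary to \cref{Le}. Hence $\phi$ has at least one fixed point $\xi \in \overline{\X}$. The argument then splits according to where $\xi$ lies.

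If $\xi \in \X(\pos)$, then $\phi$ has a fixed point in the positive real part; since $\bi$ is of \Teich type it satisfies the growth property $(\mathrm{T1})$, so by \cref{condition(T1)} the conditions $(\mathrm{ii})$ and $(\mathrm{iii})$ of \cref{periodic} are equivalent, and $\phi$ is periodic. But a periodic class is in particular cluster-reducible after passing to a power (indeed $\phi^{\mathrm{ord}(\phi)} = \mathrm{id}$ fixes everything in $|\C|$), contradicting the assumption that $\phi$ is cluster-pA. So this case does not occur. If $\xi \in P\X(\trop)$, write $\xi = [L]$ for some $L \in \X(\trop) \setminus \{0\}$ with $\phi[L] = [L]$. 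Suppose toward a contradiction that $L \in \X(\trop)_+$. Then $[L] \in P\X(\trop)_+$, and since $\bi$ satisfies the equivalence property $(\mathrm{T2})$, \cref{isomorphism} gives a $\Gamma$-equivariant homeomorphism $\Psi \colon \C \to P\X(\trop)_+$; transporting the fixed point $[L]$ back through $\Psi^{-1}$ produces a fixed point of $\phi$ in $|\C|$, so $\phi$ is cluster-reducible — again contradicting that $\phi$ is cluster-pA. Therefore $L \in \X(\trop) \setminus \X(\trop)_+$, which is exactly the assertion.

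The two cases are exhaustive because $\overline{\X} = \X(\pos) \sqcup P\X(\trop)$, so nothing further is needed. The only mildly delicate point is making sure a cluster-pA class really cannot have a fixed point in $\X(\pos)$ nor in $P\X(\trop)_+$; both obstructions are handled by invoking, respectively, the $(\mathrm{T1})$-half and the $(\mathrm{T2})$-half of the \Teich type hypothesis, together with the elementary observation that a periodic class has every power cluster-reducible. I expect the main (very minor) obstacle to be purely bookkeeping: confirming that "$\phi$ cluster-pA" rules out "$\phi$ periodic" — this is immediate from \cref{dfn; NT types}(3) since $\phi^{\mathrm{ord}(\phi)}$ fixes all of $|\C|$ and is hence cluster-reducible — and that \cref{isomorphism} indeed yields a genuine fixed point in $|\C|$ rather than merely in the vertex set, which is fine since $\Psi$ is a homeomorphism of geometric realizations.
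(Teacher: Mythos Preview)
Your proposal is correct and follows essentially the same route as the paper: apply Brouwer's fixed point theorem to the disk $\overline{\X}$, then rule out the cases $\xi \in \X(\pos)$ (using $(\mathrm{T1})$ to force periodicity, which contradicts cluster-pA) and $\xi \in P\X(\trop)_+$ (using $(\mathrm{T2})$ via \cref{redX}(2)/\cref{isomorphism} to force cluster-reducibility). The only cosmetic difference is that the paper cites \cref{redX}(2) rather than \cref{isomorphism} directly, and leaves the ``periodic $\Rightarrow$ not cluster-pA'' step implicit where you spell it out.
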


\begin{proof}
Since the tropical compactification $\overline{\X}$ is a closed disk, Brouwer's fixed point theorem says that there exists a point $x \in \overline{\X}$ such that $\phi(x)=x$. If $x \in \X(\pos)$, then by assumption $\phi$ has finite order, which is a contradiction. If $x \in P\X(\trop)_+$, then by \cref{redX}(2), $\phi$ is cluster-reducible, which is a contradiction. Hence $x \in P(\X(\trop) \backslash \X(\trop)_+)$.
\end{proof}

\subsubsection{Cluster reduction and fixed points in the tropical $\A$-space}\label{subsub: redA}
Here we define an operation, called the \emph{cluster reduction}, which produces a new seed from a given seed and a certain set of vertices of the cluster complex. 
At the end of Section \ref{subsub: redA} we study the fixed point property of a cluster-reducible class on the tropical $\A$-space.

Let $\{\alpha_1, \dots, \alpha_k\} \subset V(\C)$ be a subset of vertices, which is contained in a cluster.

\begin{lem}[the cluster reduction of a seed]
Take a cluster containing $\{\alpha_1, \dots, \alpha_k\}$. Let ${\bi}=(I, I_0, \epsilon, d)$ be the underlying seed and $i_j:=[\alpha_j] \in I$ the corresponding vertex for $j=1,\dots, n-2$ under the projection $[\ ]: \{\text{clusters}\} \to \{\text{seeds}\}$ (see \cref{cluster complex}). Then we define a new seed by ${\bi}':= (I, I_0 \sqcup \{i_1, \dots, i_k\}, \epsilon, d)$, namely, by "freezing" the vertices $\{i_1, \dots, i_k\}$. Then the corresponding cluster complex $\C':=\C_{|\bi'|}$ is naturally identified with the link of $\{\alpha_1, \dots, \alpha_k\}$ in $\C$. In particular the equivalence class $|\bi'|$ does not depend on the choice of the cluster containing $\{\alpha_1, \dots, \alpha_k\}$. 
\end{lem}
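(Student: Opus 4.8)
The plan is to establish the identification between the cluster complex of the "frozen" seed $\bi'$ and the link of $\{\alpha_1,\dots,\alpha_k\}$ in $\C$ by tracing through the combinatorial description of the cluster complex as $\bS/\Delta$, using the fact that the dual graph $\bS^\vee$ is a tree. The key observation is that freezing the vertices $\{i_1,\dots,i_k\}$ of $\bi$ simply forbids mutations in those directions: the decorated simplices of the new simplicial complex $\bS'$ are exactly the $(n-k-1)$-dimensional faces of the simplices of $\bS$ obtained by deleting the facets decorated by $i_1,\dots,i_k$, glued along their remaining mutable facets. Since $\{\alpha_1,\dots,\alpha_k\}$ lies in a common cluster, these vertices span a face $\tau$ of some maximal simplex of $\C$, and the link $\mathrm{lk}_\C(\tau)$ consists precisely of those faces that are "opposite" to $\tau$ in the maximal simplices containing $\tau$.

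First I would fix a maximal simplex $S_0$ of $\bS$ projecting to a cluster containing $\{\alpha_1,\dots,\alpha_k\}$, with underlying seed $\bi$, and set up the correspondence at the level of $\bS$. The subtree $\bS^\vee_{\mathrm{red}} \subseteq \bS^\vee$ spanned by those vertices reachable from the vertex dual to $S_0$ using only mutations in directions $I - (I_0 \sqcup \{i_1,\dots,i_k\})$ is exactly the dual tree of $\bS'$ (this uses that $\bS^\vee$ is a tree, so this subtree is well-defined and itself a tree). For each such vertex, the $A$-variables on the $k$ frozen directions are constant along $\bS^\vee_{\mathrm{red}}$ — they are never mutated — so they play the role of frozen variables for $\bi'$, while the remaining variables transform exactly by the $\bi'$-cluster transformations. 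This gives a simplicial isomorphism $\bS' \cong$ (the subcomplex of $\bS$ cut out by $\bS^\vee_{\mathrm{red}}$), compatible with variable assignments.

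Next I would descend this to the quotient by $\Delta$, respectively $\Delta'$. The subgroup $\Delta' \subseteq \mathrm{Aut}(\bS')$ preserving all cluster variables of $\bi'$ should be identified with the stabilizer in $\Delta$ of the face $\tau$, acting on the sub-star; here I use \cref{action on cluster complex} (the description of $\Delta$ as automorphisms preserving all cluster variables) together with the fact that an automorphism fixing $\tau$ and preserving the mutable variables also preserves the now-frozen variables $A_{i_1},\dots,A_{i_k}$ automatically. The quotient $\bS'/\Delta'$ is then canonically the link of $\tau = [\{\alpha_1,\dots,\alpha_k\}]$ in $\C = \bS/\Delta$: a simplex of $\mathrm{lk}_\C(\tau)$ is a face $\sigma$ with $\sigma \cup \tau$ a simplex of $\C$ and $\sigma \cap \tau = \emptyset$, which matches a simplex of $\C'$ under the correspondence above. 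Finally, independence of the choice of cluster containing $\{\alpha_1,\dots,\alpha_k\}$ is immediate once the identification with $\mathrm{lk}_\C(\tau)$ is established, since the link is an intrinsic invariant of the pair $(\C,\tau)$ and $\tau$ depends only on the set $\{\alpha_1,\dots,\alpha_k\}$, not on a chosen maximal simplex containing it; equivalently, any two such clusters are connected by mutations in the $Z$-directions, which are precisely the moves inside $\bS^\vee_{\mathrm{red}}$.

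The main obstacle I anticipate is the bookkeeping around the quotient by $\Delta$: one must check carefully that $\Delta'$ is exactly the image (or a natural subquotient) of the $\tau$-stabilizer in $\Delta$ and that no extra identifications appear or disappear when passing from $\bS$ to the subcomplex — i.e. that two faces opposite to $\tau$ become identified in $\C'$ if and only if their stars (with $\tau$) become identified in $\C$. This hinges on the precise definition of $\Delta$ via preservation of \emph{all} cluster variables and the fact that freezing does not enlarge the automorphism group in an uncontrolled way; the tree structure of $\bS^\vee$ is what keeps this manageable, since it prevents the kind of monodromy that would make the variable-assignment ill-defined.
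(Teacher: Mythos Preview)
Your approach is essentially the same as the paper's: both identify clusters of $\C'$ with clusters of $\C$ containing $\{\alpha_1,\dots,\alpha_k\}$ via the observation that freezing $i_1,\dots,i_k$ simply forbids mutations in those directions, so the allowed mutations are exactly those that stay in the star of $\tau$. The paper's proof, however, is far terser than yours: it works directly at the level of $\C$ (not $\bS$), notes that a mutation at $k\in I-(I_0\sqcup\{i_1,\dots,i_k\})$ keeps $\{\alpha_1,\dots,\alpha_k\}$ in the cluster, asserts the converse (``any cluster $C'$ containing $\{\alpha_1,\dots,\alpha_k\}$ is obtained by such a sequence of mutations'') without further argument, and concludes. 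In particular, the $\Delta$-quotient bookkeeping and the link-connectivity point you flag as the main obstacle are not addressed explicitly in the paper; your elaboration via the subtree $\bS^\vee_{\mathrm{red}}$ and the stabilizer of $\tau$ in $\Delta$ is a legitimate way to fill in what the paper leaves implicit.
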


\begin{proof}
Let $C \subset \C$ be a cluster containing $\{\alpha_1, \dots, \alpha_k\}$, $\bi=(I,I_0,\epsilon,d)$ the corresponding seed.
For a mutation directed to a mutable vertex $k \in I-(I_0 \sqcup \{i_1, \dots, i_k\})$, the cluster $C'=\mu_k(C)$ also contains $\{\alpha_1, \dots, \alpha_k\}$. Conversely, any cluster $C'$ containing $\{\alpha_1, \dots, \alpha_k\}$ is obtained by such a sequence of mutations. Hence each cluster in the cluster complex $\C'$ has the form $C \backslash \{\alpha_1, \dots, \alpha_k\}$, for some cluster $C \subset \C$ containing $\{\alpha_1, \dots, \alpha_k\}$.
\end{proof}
We say that the corresponding object, such as the cluster ensemble $p_{|\bi'|}:\A_{|\bi'|} \to \X_{|\bi'|}$ or the cluster modular group $\Gamma_{|\bi'|}$, is obtained by the  \emph{cluster reduction} with respect to the invariant set $\{\alpha_1, \dots, \alpha_k\}$ from the original one.
Next we show that some power of a cluster-reducible class induces a new mapping class by the cluster reduction.
\begin{lem}
Let $\bi$ be a seed, $\phi \in \Gamma_{|\bi|}$ a mapping class. Then $\phi$ is cluster-reducible if and only if it has an invariant set of vertices $\{\alpha_1, \dots, \alpha_k\} \in V(\C)$ contained in a cluster.
\end{lem}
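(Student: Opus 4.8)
The plan is to reduce this statement to a standard fact about simplicial automorphisms: a simplicial automorphism of a simplicial complex fixes a point of its geometric realization if and only if it stabilizes some simplex setwise. Throughout I will use that $\phi$ acts on $\C$ by a simplicial automorphism, which is \cref{action on cluster complex}, together with the fact that $\C$ is $(n-1)$-dimensional and every simplex of $\C$ is a face of an $(n-1)$-simplex (a cluster), so that a subset of $V(\C)$ is ``contained in a cluster'' precisely when it spans a simplex of $\C$.

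For the forward implication, suppose $\phi$ is cluster-reducible, i.e. $\phi(x)=x$ for some $x\in|\C|$. First I would invoke the elementary fact that every point of the geometric realization lies in the relative interior of a unique simplex of $\C$, its \emph{carrier} $\sigma_x$. Since $\phi$ is simplicial, it carries $\sigma_x$ onto a simplex $\phi(\sigma_x)$ and restricts to a homeomorphism between their relative interiors; as $\phi(x)=x$, the point $x$ also lies in the relative interior of $\phi(\sigma_x)$, so uniqueness of the carrier forces $\phi(\sigma_x)=\sigma_x$. Hence $\phi$ permutes the vertex set $\{\alpha_1,\dots,\alpha_k\}$ of $\sigma_x$, and this set, being the vertex set of a simplex, is contained in a maximal simplex, i.e. in a cluster. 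This is the desired invariant set.

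For the converse, suppose $\{\alpha_1,\dots,\alpha_k\}\subset V(\C)$ is $\phi$-invariant and contained in a cluster. Since $\C$ is a simplicial complex, any subset of the vertex set of a simplex spans a face, so $\{\alpha_1,\dots,\alpha_k\}$ spans a simplex $\sigma$ of $\C$. Because $\phi$ is simplicial and permutes the vertices of $\sigma$, we have $\phi(\sigma)=\sigma$, and under the identification $\sigma\cong P\mathbb{R}^{k}_{\geq 0}$ by barycentric coordinates $\phi$ acts by the corresponding permutation of coordinates. The barycenter of $\sigma$, the point with all barycentric coordinates equal, is fixed by every such permutation, hence is a point of $|\C|$ fixed by $\phi$; therefore $\phi$ is cluster-reducible.

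I do not expect a genuine obstacle here: the content is purely the combinatorial topology of simplicial complexes, and the only points requiring care are the well-definedness of the carrier simplex and the compatibility of simplicial maps with relative interiors, both of which are standard. I would present the proof in exactly the two steps above.
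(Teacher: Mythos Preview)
Your proof is correct and follows essentially the same approach as the paper: both directions hinge on the carrier simplex of the fixed point for the forward implication and on the barycenter for the converse. Your version is in fact slightly more careful than the paper's, which speaks of ``the cluster $C$ containing the point $c$'' where the unique carrier simplex is what is really meant.
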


\begin{proof}
Suppose $\phi$ is cluster-reducible. Then $\phi$ has a fixed point $c \in |\C|$. Since the action is simplicial, $\phi$ fixes the cluster $C$ containing the point $c$. Hence $\phi$ permute the vertices of $C$, which give an invariant set contained in $C$. The converse is also true, since $\phi$ fixes the point given by the barycenter of the vertices $\{\alpha_1, \dots, \alpha_k\}$.
\end{proof}

\begin{dfn}[proper reducible classes]\label{def: proper reducible}
A mapping class $\phi \in \Gamma_{|\bi|}$ is called \emph{proper reducible} if it has a fixed point in $V(\C)$.
\end{dfn}

\begin{lem}
Let $\phi \in \Gamma_{|\bi|}$ be a mapping class.
\begin{enumerate}
\item If $\phi$ is proper reducible, then $\phi$ is cluster reducible.
\item If $\phi$ is cluster-reducible, then some power of $\phi$ is proper reducible.
\end{enumerate}
\end{lem}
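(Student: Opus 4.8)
The plan is to prove the two implications separately, using the correspondence between fixed points in $|\C|$ and invariant vertex sets contained in a cluster, which is the content of the preceding lemma.

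For (1), suppose $\phi$ is proper reducible, so there is a vertex $\alpha \in V(\C)$ with $\phi(\alpha)=\alpha$. A single vertex is trivially contained in a cluster (every vertex of $\C$ spans some maximal simplex, since $\C$ is obtained by gluing maximal simplices). Hence $\{\alpha\}$ is an invariant set of vertices contained in a cluster, and by the lemma just proved, $\phi$ is cluster-reducible; equivalently, $\phi$ fixes the point of $|\C|$ given by the vertex $\alpha$ itself. This direction is immediate.

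For (2), suppose $\phi$ is cluster-reducible. By the previous lemma, $\phi$ has an invariant set $\{\alpha_1, \dots, \alpha_k\} \subset V(\C)$ contained in a cluster; that is, $\phi$ permutes the finite set $\{\alpha_1, \dots, \alpha_k\}$. Let $l$ be the order of this permutation (equivalently, take $l = k!$ to be safe). Then $\phi^l$ fixes each $\alpha_j$ individually, so in particular $\phi^l$ has a fixed point in $V(\C)$, namely $\alpha_1$. Hence $\phi^l$ is proper reducible by definition.

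I do not expect a genuine obstacle here: the statement is essentially bookkeeping built on the two lemmas immediately preceding it (the equivalence of cluster-reducibility with the existence of an invariant vertex set inside a cluster, and the definition of proper reducibility as having a fixed vertex). The only point requiring a word of care is that in the proof of (1) one should note that a single vertex always lies in some cluster — this follows from the construction of $\C$ as a union of maximal simplices glued along mutable facets, so that every vertex is a vertex of at least one maximal simplex. With that remark in place both implications are one-line arguments.
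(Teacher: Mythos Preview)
Your proof is correct and follows exactly the same approach as the paper, which simply writes ``Clear from the previous lemma.'' Your version is just a more detailed unpacking of that one-line remark, using the characterization of cluster-reducibility via an invariant vertex set in a cluster and then passing to a power to trivialize the permutation.
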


\begin{proof}
Clear from the previous lemma.
\end{proof}

\begin{lem}[the cluster reduction of a proper reducible class]
Let $\phi \in \Gamma_{|\bi|}$ be a proper reducible class, $\{\alpha_1, \dots, \alpha_k\}$ a fixed point set of vertices contained in a cluster. Then $\phi$ induces a new mapping class $\phi' \in \Gamma_{|\bi'|}$ in the cluster modular group obtained by the cluster reduction with respect to $\{\alpha_1, \dots, \alpha_k\}$. 
\end{lem}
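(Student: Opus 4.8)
The plan is to produce $\phi'$ by restricting $\phi$ to the link of $\{\alpha_1,\dots,\alpha_k\}$ in $\C$ and then to recognize this restriction as an element of $\Gamma_{|\bi'|}$ via \cref{action on cluster complex}. By hypothesis $\phi$ fixes each vertex $\alpha_j$, so it preserves the simplicial link $\mathrm{lk}_{\C}(\{\alpha_1,\dots,\alpha_k\})$; by the previous lemma this link is naturally identified with $\C'=\C_{|\bi'|}$, and hence $\phi$ induces a simplicial automorphism $\phi'$ of $\C'$. What remains is to check that $\phi'$ belongs to the image of the natural action of $\Gamma_{|\bi'|}$ on $\C'$.

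For this I would apply \cref{action on cluster complex} to the seed $\bi'$: it identifies $\Gamma_{|\bi'|}$ with $D'/\Delta'$, where $D'\subset\mathrm{Aut}(\bS')$ is the subgroup of automorphisms preserving the exchange matrices of the seeds labelling the vertices of $(\bS')^{\vee}$. First I would lift $\phi'$ to an automorphism $\tilde\phi'$ of $\bS'$: choosing a base maximal simplex of $\bS'$, a maximal simplex of $\bS'$ lying over its $\phi'$-image, and the unique decoration-compatible identification between them, one extends uniquely along the tree $(\bS')^{\vee}$ to an automorphism of $\bS'$ covering $\phi'$; any two such lifts differ by an element of $\Delta'$ --- a nontrivial element of $\Delta'$ fixes no maximal simplex of $\bS'$ --- so the class $\tilde\phi'\Delta'\in D'/\Delta'$ is well-defined. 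Then I would verify $\tilde\phi'\in D'$: every cluster of $\C'$ has the form $D\setminus\{\alpha_1,\dots,\alpha_k\}$ for a cluster $D$ of $\C$ containing $\{\alpha_1,\dots,\alpha_k\}$, and the seed in $|\bi'|$ that it labels differs from the seed in $|\bi|$ labelled by $D$ only in that the frozen subset is enlarged, with $\epsilon$ and $d$ unchanged; since $\phi$ preserves the exchange matrices along $\C$ (again by \cref{action on cluster complex}) and fixes $\{\alpha_1,\dots,\alpha_k\}$, the induced automorphism $\phi'$, hence its lift $\tilde\phi'$, preserves the exchange matrices along $\C'$. Thus $\tilde\phi'\Delta'\in\Gamma_{|\bi'|}$ is the sought mapping class $\phi'$, and it does not depend on the auxiliary choices because neither $|\bi'|$ nor the identification $\C'\cong\mathrm{lk}_{\C}(\{\alpha_1,\dots,\alpha_k\})$ does.

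I expect the main obstacle to be the bookkeeping in this last step: checking that freezing vertices leaves the exchange-matrix data untouched and that the exchange-matrix compatibility of $\phi$ on $\C$ descends, under the link identification, to a corresponding compatibility of $\phi'$ on $\C'$ --- together with making the lift to $\bS'$ precise, in particular that the projection $\bS'\to\C'$ is injective on stars of maximal simplices. Once \cref{action on cluster complex} is invoked for both $\bi$ and $\bi'$, the rest of the argument is formal.
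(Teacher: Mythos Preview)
Your proposal is correct and follows essentially the same route as the paper, though you supply considerably more detail. The paper's proof is a single assertion: the identification of $\C'$ with the link of $\{\alpha_1,\dots,\alpha_k\}$ in $\C$ induces a group isomorphism
\[
\Gamma_{|\bi'|} \cong \{\psi \in \Gamma_{|\bi|} \mid \psi(\C')=\C'\},
\]
and since $\phi$ fixes each $\alpha_j$ it lies in the right-hand side. Your argument unpacks what this isomorphism amounts to --- restricting to the link, lifting to $\bS'$, and checking membership in $D'$ via exchange-matrix preservation --- which is the honest content behind the paper's one-line claim. The bookkeeping you flag (that freezing leaves $\epsilon$ and $d$ unchanged, that the exchange-matrix compatibility of $\phi$ descends through the link identification) is exactly what one needs to justify the asserted isomorphism, and it is indeed routine once \cref{action on cluster complex} is in hand for both $\bi$ and $\bi'$.
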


\begin{proof}
The identification of $\C'$ with the link of the invariant set $\{\alpha_1, \dots, \alpha_k\}$ in $\C$ induces an group isomorphism \[
\Gamma_{|\bi'|} \cong \{ \psi \in \Gamma_{|\bi|} | \psi(\C')=\C'\},
\]
and the right-hand side contains $\phi$. Let $\phi' \in \Gamma_{|\bi'|}$ be the corresponding element. Note that $\phi$ fixes all frozen vertices in $\bi'$, since it is proper reducible.
\end{proof}
We say that the mapping class $\phi'$ is obtained by the \emph{cluster reduction} with respect to the fixed point set $\{\alpha_1, \dots, \alpha_k\}$ from $\phi$. 

\begin{lem}
A proper reducible class of infinite order induces a cluster-pA class in the cluster modular group corresponding to the seed obtained by a finite number of the cluster reductions.
\end{lem}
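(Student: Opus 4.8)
The plan is to induct on the mutable rank $n$ of the seed $\bi$, taking as induction hypothesis the assertion of the lemma for every seed of mutable rank strictly less than $n$. The base case $n=0$ is vacuous: a seed with no mutable vertex admits no mutation and only the identity seed isomorphism (which must fix all of $I=I_0$), so its cluster modular group is trivial and contains no element of infinite order.

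For the inductive step, let $\phi\in\Gamma_{|\bi|}$ be proper reducible of infinite order, with $\bi$ of mutable rank $n\ge1$. Since $\phi$ is proper reducible it is cluster-reducible, so $\phi=\phi^{1}$ is cluster-reducible and $\phi$ is \emph{not} cluster-pA; in particular at least one reduction is genuinely required. Pick a $\phi$-fixed vertex $\alpha\in V(\C)$ (which, being a vertex, lies in some cluster) and perform the cluster reduction with respect to $\{\alpha\}$, obtaining a seed $\bi'$ and a mapping class $\phi'\in\Gamma_{|\bi'|}$. Two observations drive the argument. First, $\alpha$ is a vertex of the cluster complex, hence a genuinely mutable vertex of $\bi$, so $\bi'$ has mutable rank $n-1<n$. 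Second, under the isomorphism $\Gamma_{|\bi'|}\cong\{\psi\in\Gamma_{|\bi|}\mid\psi(\C')=\C'\}$ furnished by the cluster reduction lemma, $\phi'$ corresponds to $\phi$, so $\phi'$ again has infinite order.

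If $\phi'$ is cluster-pA the proof is complete. Otherwise some power of $\phi'$ is cluster-reducible, so by the lemma that a cluster-reducible class has a proper reducible power there is an $l\ge1$ with $(\phi')^{l}$ proper reducible, and $(\phi')^{l}$ still has infinite order. Applying the induction hypothesis to $(\phi')^{l}\in\Gamma_{|\bi'|}$ — legitimate since $\bi'$ has mutable rank $n-1$ — produces a finite chain of cluster reductions after which $(\phi')^{l}$ induces a cluster-pA class; prepending the reduction $\bi\rightsquigarrow\bi'$ together with the passage to the $l$-th power exhibits the required finite sequence of cluster reductions for $\phi$. This is exactly the content of \cref{thm; generalized Dehn}(2).

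I do not expect a genuine obstacle; this is a termination-of-descent argument and the three things that must be checked are all either quoted lemmas or immediate. The most delicate is the bookkeeping of the powers: the cluster reduction lemma applies only to proper reducible classes, so before each successive reduction one must pass from ``some power is cluster-reducible'' to ``\emph{this} power is proper reducible'', which is precisely why the conclusion — and \cref{thm; generalized Dehn}(2) — is stated for a power $\phi^{l}$ rather than for $\phi$ itself. The other two points are that each reduction strictly decreases the mutable rank (it freezes at least one honestly mutable vertex), which closes the induction, and that a reduction is order-preserving on the relevant subgroup, so ``infinite order'' is inherited at every stage. Equivalently put: the descent on the mutable rank cannot continue indefinitely, and it can only terminate at a class no power of which is cluster-reducible, that is, at a cluster-pA class.
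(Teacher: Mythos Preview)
Your proof is correct and follows the same underlying idea as the paper: each cluster reduction strictly lowers the mutable rank, so the process terminates, and it can only terminate at a cluster-pA class. The paper's own proof is the one-liner ``Clear from the definition of the cluster-pA classes'', so you have simply written out in full the descent argument that the paper leaves to the reader, including the bookkeeping about passing to powers before each successive reduction.
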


\begin{proof}
Clear from the definition of the cluster-pA classes.
\end{proof}

\begin{ex}[Type $X_7$]
Let $\bi=(\{0,1,2,3,4,5,6\}, \emptyset,\epsilon)$ be the skew-symmetric seed defined by the quiver described in \cref{fig; X7}. We call this seed \emph{type $X_7$}, following ~\cite{Derksen-Owen}. See also ~\cite{FeST12}. The mapping class $\phi_1:=(1\ 2)\circ \mu_1 \in \Gamma_{X_7}$ is proper reducible and fixes the vertex $A_i \in V(\C)$ ($i=0,3,4,5,6$), which is the $i$-th coordinate in the initial cluster. The cluster reduction with respect to the invariant set $\{A_0,A_3,A_4,A_5,A_6\}$ produces a seed $\bi'=(\{0,1,2,3,4,5,6\}, \{0,3,4,5,6\},\epsilon)$ of type $L_2$, except for some non-trivial coefficients. The cluster complex $\C_{|\bi'|}$ is identified with the link of $\{A_0,A_3,A_4,A_5,A_6\}$, which is the line of infinite length. The cluster reduction $\phi'$ is cluster-pA, and acts on this line by the shift of length $1$. Compare with \cref{example: cluster complex}.

The mapping class $\psi_1:=(0\ 1\ 2)(3\ 4\ 5\ 6)\circ \mu_2\mu_1\mu_0 \in \Gamma_{X_7}$ is cluster-reducible, since it has an invariant set $\{A_3,A_4,A_5,A_6\}$ contained in the initial cluster. Note that the power $\psi_1^2$ is proper reducible, since it fixes the vertex $A_0$.
\end{ex}

\begin{figure}
\unitlength 0.5mm
\begin{center}
\[
\begin{xy} 0;<0.5pt,0pt>:<0pt,-0.5pt>:: 
(140,100) *+[o][F]{0} ="0",
(0,50) *+[o][F]{1} ="1",
(75,0) *+[o][F]{2} ="2",
(225,0) *+[o][F]{3} ="3",
(280,50) *+[o][F]{4} ="4",
(175,240) *+[o][F]{5} ="5",
(105,240) *+[o][F]{6} ="6",
"0", {\ar"1"},
"2", {\ar"0"},
"0", {\ar"3"},
"4", {\ar"0"},
"0", {\ar"5"},
"6", {\ar"0"},
"1", {\ar|*+{\scriptstyle 2}"2"},
"3", {\ar|*+{\scriptstyle 2}"4"},
"5", {\ar|*+{\scriptstyle 2}"6"},
\end{xy}
\]
\caption{quiver $X_7$}
\label{fig; X7}
\end{center}

\end{figure}
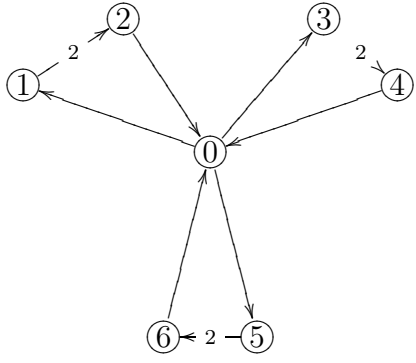

\begin{lem}\label{embedding}
Let $\bi$ be a seed, and $\bi'$ the seed obtained by a cluster reduction. Let $\psi_\A : \G_{|\bi|} \to \Pos(\mathbb{R})$ and $\psi'_\A : \G_{|\bi'|} \to \Pos(\mathbb{R})$ be the positive $\A$-spaces associated with the seeds $\bi$ and $\bi'$, respectively. Then there is a natural morphism of the positive spaces $(\iota, q): \psi'_\A \to \psi_\A$ which induces $\Gamma_{|\bi'|}$-equivariant homeomorphisms $\A'(\pos) \cong \A(\pos)$ and  $\A'(\trop) \cong \A(\trop)$.
\end{lem}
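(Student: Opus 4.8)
The plan is to build the morphism $(\iota,q)$ directly from the cluster-complex identification that defines the cluster reduction. Recall that $\C'=\C_{|\bi'|}$ is identified with the link of the invariant set $\{\alpha_1,\dots,\alpha_k\}$ in $\C$; concretely, every cluster of $\C'$ has the form $C\setminus\{\alpha_1,\dots,\alpha_k\}$ for a cluster $C$ of $\C$ containing $\{\alpha_1,\dots,\alpha_k\}$, and a mutation $\mu_j$ in $\bi'$ (directed to a mutable vertex $j\in I-(I_0\sqcup\{i_1,\dots,i_k\})$) is literally the same mutation $\mu_j$ in $\bi$. This gives a functor $\iota:\G_{|\bi'|}\to\G_{|\bi|}$ on the level of coordinate groupoids: send a seed of $|\bi'|$ to the underlying seed of the corresponding cluster of $\C$, and send a seed cluster transformation of $|\bi'|$ (a composition of mutations $\mu_j$ and seed isomorphisms fixing $I_0\sqcup\{i_1,\dots,i_k\}$) to the same composition read in $\G_{|\bi|}$. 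One must check this is well-defined modulo trivial seed cluster transformations, which follows because a composition acting trivially on the $\bi'$-tori already acts trivially on all mutable and frozen $\A$-coordinates of $\bi'$, hence on all coordinates of the underlying $\bi$-seed.

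Next I would define the natural transformation $q:\psi'_\A\Rightarrow\psi_\A\circ\iota$. For each seed $\bi'_v$ in $|\bi'|$ (dual to a vertex $v$), the $\A$-torus $\A_{\bi'_v}$ and $\A_{\iota(\bi'_v)}$ have the \emph{same} index set $I$ and the \emph{same} lattice $\Lambda^\circ$, because freezing vertices changes neither $I$ nor $d$ nor $\epsilon$; only $I_0$ grows. So take $q_v=\mathrm{id}:\A_{\bi'_v}\to\A_{\iota(\bi'_v)}$, which is visibly monomial (indeed a monomial isomorphism). Naturality with respect to a mutation $\mu_j$ ($j$ mutable in $\bi'$) is the statement that the $\A$-mutation formula of Definition \ref{cluster transf}(2) for $\mu_j$ is insensitive to whether the auxiliary vertices $i_1,\dots,i_k$ are declared frozen or mutable: in both cases $\mu_j^a$ fixes $A_i$ for $i\neq j$ and transforms $A_j$ by the same exchange relation, since that relation depends only on $\epsilon$, not on $I_0$. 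Similarly for seed isomorphisms. Hence $(\iota,q)$ is a morphism of positive spaces.

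Finally I would deduce the asserted homeomorphisms. Passing to positive real parts, $q$ glues up to a map $\A'(\pos)\to\A(\pos)$; since each $q_v$ is the identity on seed tori and the gluing data (the $\A$-cluster transformations) match under $\iota$ by naturality, this map is a diffeomorphism — it is just the reindexing of charts $\psi^a_{\bi'_v}$ by $\psi^a_{\iota(\bi'_v)}$. The same argument with $\Trop$ applied to the (identical) transition maps gives the PL homeomorphism $\A'(\trop)\cong\A(\trop)$. Equivariance: by the previous lemma, $\Gamma_{|\bi'|}\cong\{\psi\in\Gamma_{|\bi|}\mid\psi(\C')=\C'\}$, and under this identification the $\Gamma_{|\bi'|}$-action on $\A'(\pos)$ (resp. $\A'(\trop)$) described by diagram \eqref{action} is carried, chart by chart, to the restriction of the $\Gamma_{|\bi|}$-action on $\A(\pos)$ (resp. $\A(\trop)$) — again because both actions are computed by the same compositions of cluster transformations and coordinate permutations. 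The main obstacle is purely bookkeeping: one must verify carefully that "trivial" seed cluster transformations in $|\bi'|$ map to "trivial" ones in $|\bi|$ so that $\iota$ descends to the quotient groupoids $\G_{|\bi'|}\to\G_{|\bi|}$, and that the frozen $\A$-variables $A_{i_1},\dots,A_{i_k}$ of $\bi'$ — which do get mutated when viewed inside $\bi$ along mutations not present in $\bi'$ — never enter, because $\iota$ by construction only uses the mutations $\mu_j$ with $j$ mutable in $\bi'$, for which those coordinates are untouched.
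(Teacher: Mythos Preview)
Your proposal is correct and follows essentially the same approach as the paper: define $\iota$ by forgetting the extra frozen labels on the seeds, take $q$ to be the identity on each seed $\A$-torus, and observe that the $\A$-mutation formulas are insensitive to whether $i_1,\dots,i_k$ are frozen or mutable. The paper's proof is considerably terser---it simply writes $(I,I_0\sqcup\{i_1,\dots,i_k\},\epsilon,d)\mapsto(I,I_0,\epsilon,d)$, declares $q$ to be the identity, and says the rest is ``clear''---while you spell out naturality, the passage to positive real parts and tropicalizations, and the equivariance via the identification $\Gamma_{|\bi'|}\cong\{\psi\in\Gamma_{|\bi|}\mid\psi(\C')=\C'\}$, none of which the paper addresses explicitly.
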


\begin{proof}
Note that the only difference between the two positive $\A$-spaces is the admissible directions of mutations. The functor $\iota: \G_{|\bi'|} \to \G_{|\bi|}$ between the coordinate groupoids is defined by $(I, I_0 \sqcup \{i_1, \dots, i_k\}, \epsilon, d) \mapsto (I, I_0, \epsilon, d)$ and sending the morphisms naturally. The identity map $\A_{\bi'}(k)=\A_\bi(k)$ for each $\A$-torus combine to give a natural transformation $q: \psi'_\A \Rightarrow \psi_\A \circ \iota$. The latter assertion is clear.
\end{proof}

\begin{rem}
We have no natural embedding of the $\X$-space in general, since $\X$-coordinates assigned to the vertices in $\{i_1, \dots, i_k\}$ may be changed by cluster $\X$-transformations directed to the vertices in $I-(I_0 \sqcup \{i_1, \dots, i_k\})$.
\end{rem}

\begin{dfn}
A tropical point $G \in \A(\trop)$ is said to be \emph{cluster-filling} if it satisfies $a_\alpha(G)\neq0$ for all $\alpha \in V(\C)$.
\end{dfn}
Note that the definition depends only on the projective class of $G$.

\begin{prop}[fixed points in $\A$-space]\label{redA}
Let $\bi$ be a seed satisfying the condition $(\mathrm{T1})$, and $\Gamma=\Gamma_{|\bi|}$ the corresponding cluster modular group. 
For a proper reducible class $\phi \in \Gamma$ of infinite order, there exists a non-cluster-filling point $G \in \A(\trop)$ such that $\phi[G]=[G]$.
\end{prop}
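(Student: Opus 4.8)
The plan is to \emph{linearize} the problem by means of a cluster reduction and then apply Brouwer's fixed point theorem to an ``equatorial'' sub-disk. Since $\phi$ is proper reducible it fixes a vertex $\alpha\in V(\C)$; first I would pass to the seed $\bi'$ obtained from $\bi$ by freezing $\alpha$. As in the discussion of cluster reductions above, $\phi$ then induces a mapping class $\phi'\in\Gamma_{|\bi'|}$, corresponding to $\phi$ under the inclusion $\Gamma_{|\bi'|}\hookrightarrow\Gamma_{|\bi|}$, and hence of infinite order. By \cref{embedding} there are $\Gamma_{|\bi'|}$-equivariant homeomorphisms $\A'(\pos)\cong\A(\pos)$ and $\A'(\trop)\cong\A(\trop)$ intertwining the action of $\phi'$ with that of $\phi$ and carrying the coordinate $A_\alpha$ of $\bi'$ to the cluster $\A$-variable $A_\alpha$ of $\bi$. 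It therefore suffices to produce a fixed point $q$ of $\phi'$ on $\A'(\trop)$ with $a_\alpha(q)=0$, which, because $\alpha\in V(\C)$, translates into a non-cluster-filling fixed point of $\phi$ in $\A(\trop)$.

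The decisive observation is that $A_\alpha$ is now a \emph{frozen} variable of $\bi'$: it is untouched by every mutation and fixed by every seed isomorphism of $\bi'$, so $(\phi')^\ast A_\alpha = A_\alpha$. Consequently $a_\alpha$ is a single \emph{linear} coordinate on $\A'(\trop)\cong\mathbb{R}^N$ (given by the same function in every chart), it is $\phi'$-invariant, and $W:=\{a_\alpha=0\}$ is a linear hyperplane through the origin preserved by $\phi'$. In the tropical compactification $\overline{\A'}=\A'(\pos)\sqcup P\A'(\trop)\cong D^N$ — whose topology near the boundary is the radial (spherical) compactification of $\mathbb{R}^N$ read in a chart $\F_{v'}$ — the closure $\overline W$ of $W$ is the equatorial sub-disk $W\cup\{[G]\in P\A'(\trop):a_\alpha(G)=0\}\cong D^{N-1}$, and it is preserved by $\phi'$ since $\phi'$ is a homeomorphism preserving $W$. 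Because the $\Gamma_{|\bi'|}$-action extends continuously to $\overline{\A'}$, the restriction $\phi'|_{\overline W}$ is a continuous self-map of $D^{N-1}$, so Brouwer's fixed point theorem yields a fixed point $q\in\overline W$.

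Finally I would rule out $q\in W\subset\A'(\pos)$: such a $q$ would give, via the equivariant homeomorphism $\A'(\pos)\cong\A(\pos)$ of \cref{embedding}, a fixed point of $\phi$ in $\A(\pos)$; but $\bi$ satisfies $(\mathrm{T1})$ and $\phi$ has infinite order, so \cref{condition(T1)} (with $\Z=\A$, using \cref{periodic}) shows that $\phi$ is not periodic and hence has no fixed point in $\A(\pos)$ — a contradiction. Therefore $q\in P\A'(\trop)$ with $a_\alpha(q)=0$; its image $[G]\in P\A(\trop)$ satisfies $\phi[G]=[G]$ with $a_\alpha(G)=0$, so $G$ is non-cluster-filling, as required.

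I expect the only real obstacle to be the bookkeeping around \cref{embedding}: checking that freezing $\alpha$ genuinely turns $a_\alpha$ into a globally linear coordinate on $\A'(\trop)$ — so that $\overline W$ is an honest disk rather than a complicated PL subcomplex of the boundary sphere — and that the identification $\overline{\A'}\cong\overline{\A}$ intertwines the $\phi'$- and $\phi$-actions together with the coordinate $a_\alpha$ on both interior and boundary. Once these identifications are set up carefully, the argument is a clean combination of Brouwer's theorem with the growth property $(\mathrm{T1})$.
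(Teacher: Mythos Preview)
Your argument is correct and follows the same skeleton as the paper's proof: pass to the cluster reduction $\bi'$, apply Brouwer's fixed point theorem to (a piece of) the tropical compactification $\overline{\A'}$, and use the growth property $(\mathrm{T1})$ via the equivariant identification $\A'(\pos)\cong\A(\pos)$ of \cref{embedding} to push the fixed point to the boundary $P\A'(\trop)$.

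The one genuine difference is your restriction to the invariant sub-disk $\overline{W}=\{A_\alpha=1\}\sqcup\{[G]:a_\alpha(G)=0\}$. The paper instead applies Brouwer to the whole disk $\overline{\A'}$ and obtains a fixed point somewhere on $P\A'(\trop)$, without explicitly explaining why that point has a vanishing $a$-coordinate; your approach makes the non-cluster-filling conclusion automatic, since the boundary of $\overline{W}$ is exactly the locus $\{a_\alpha=0\}$ in $P\A'(\trop)$. In this sense your version is tighter: it buys the non-cluster-filling conclusion for free, at the modest cost of checking that freezing $\alpha$ makes $A_\alpha$ genuinely invariant under $\phi'$ (true because seed isomorphisms fix frozen vertices and mutations never touch frozen $\A$-variables), so that $\overline{W}$ really is a $\phi'$-invariant closed $(N-1)$-disk. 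The bookkeeping you flag in your last paragraph is exactly what is needed, and it goes through as you indicate.
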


\begin{proof}
Let $\{\alpha_1,\dots, \alpha_k\}$ be a fixed point set of $\phi$ contained in a cluster, and $\phi' \in \Gamma_{|\bi'|}$ the corresponding cluster reduction. Since the tropical compactification $\overline{\A'}$ is a closed disk, $\phi'$ has a fixed point $x' \in \overline{\A'}$ by Brouwer's fixed point theorem. By \cref{condition(T1)}, $x$ must be a point on the boundary $P\A'(\trop)$. 
Then $\phi$ fixes the image $x$ of $x' \in \overline{\A}$ under the homeomorphism given by \cref{embedding}.  
\end{proof}

\subsection{Cluster-pA classes of special type: cluster Dehn twists}\label{sub: cluster Dehn twists}
Using the cluster reduction we define special type of cluster-pA mapping classes, called \emph{cluster Dehn twists}, and prove that they have an asymptotic behavior of orbits on the tropical compactification of the $\A$-space analogous to that of Dehn twists in the mapping class group.

\begin{dfn}[cluster Dehn twists]
Let $\bi$ be a skew-symmetric seed of mutable rank $n$.
A cluster-reducible class $\phi \in \Gamma_{|\bi|}$ is said to be \emph{cluster-reducible to rank $m$} if the following conditions hold.
\begin{enumerate}
\item There exists a number $l \in \mathbb{Z}$ such that $\psi=\phi^l$ is proper reducible.
\item The mapping class $\psi$ induces a mapping class in the cluster modular group associated with the seed of mutable rank $m$ obtained by the cluster reduction with respect to a fixed point set $\{\alpha_1, \dots, \alpha_{n-m}\}$ of $\psi$.
\end{enumerate}

A cluster-reducible class $\phi$ of infinite order is called a \emph{cluster Dehn twist} if it is cluster-reducible to rank 2. Namely, there exists a number $l \in \mathbb{Z}$ and a subset $\{\alpha_1, \dots, \alpha_{n-2}\} \subset V(\C_{|\bi|})$ of vertices which is fixed by $\phi^l$ and contained in a cluster, where $n$ is the mutable rank of $\bi$.
\end{dfn}
A skew-symmetric seed is said to be \emph{connected} if the corresponding quiver is connected.
\begin{lem}\label{lem: cluster Dehn twists}
Let $\bi$ be a skew-symmetric connected seed of mutable rank $n \geq 3$. Suppose that a proper reducible class $\psi \in \Gamma_{|\bi|}$ has infinite order and there exists a subset $\{\alpha_1, \dots, \alpha_{n-2}\} \subset V(\C_{|\bi|})$ of vertices which is fixed by $\psi$ and contained in a cluster. Then the action of the cluster reduction $\psi' \in \Gamma_{|\bi'|}$ with respect to the invariant set $\{\alpha_1, \dots, \alpha_{n-2}\}$ on the $\A$-space is represented as follows:
\begin{equation}\label{eq: cluster Dehn twists}
(\psi')^*(A_0, A_1)=\left(A_1, \frac{C+A_1^2}{A_0} \right).
\end{equation}
Here $(A_0,A_1)$ denotes the remaining cluster coordinates of the $\A$-space under the cluster reduction, $C$ is a product of frozen variables.
\end{lem}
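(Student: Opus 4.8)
The plan is to reduce everything to the structure of rank‑$2$ seeds. First I would fix once and for all a cluster $C_{0}$ of $\C_{|\bi|}$ containing $\{\alpha_{1},\dots,\alpha_{n-2}\}$, with underlying seed $\bi_{0}=(I,I_{0},\epsilon,d)$; since $\bi_{0}$ is obtained from $\bi$ by mutations and seed isomorphisms and these preserve connectedness of the quiver, $\bi_{0}$ is again connected and skew‑symmetric. Freezing the $n-2$ vertices $i_{j}:=[\alpha_{j}]$ produces $\bi'$ with exactly two mutable vertices, say $\beta_{0},\beta_{1}$ (the two vertices of $C_{0}$ other than the $\alpha_{j}$), and mutable exchange datum $b:=\epsilon_{\beta_{0}\beta_{1}}\in\mathbb Z$. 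By the cluster‑reduction lemmas, $\psi$ induces $\psi'\in\Gamma_{|\bi'|}$, and under the identification $\Gamma_{|\bi'|}\cong\{\chi\in\Gamma_{|\bi|}:\chi(\C')=\C'\}$ the element $\psi'$ corresponds to $\psi$; hence $\psi'$ has infinite order.

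Next I would pin down $\Gamma_{|\bi'|}$. The mutable part of $\bi'$ is the seed of type $L_{|b|}$ with some frozen coefficients attached. If $|b|\le 1$ this is of finite type ($A_{1}\times A_{1}$ or $A_{2}$), so $\C_{|\bi'|}$ is finite and $\Gamma_{|\bi'|}$ is finite, contradicting that $\psi'$ has infinite order; hence $|b|\ge 2$. In that case, as in \cref{example: cluster modular group} and \cref{example: periodic} for $L_{k}$, $\C_{|\bi'|}$ is a bi‑infinite path and $\Gamma_{|\bi'|}\cong\mathbb Z$, generated by $\phi:=(0\ 1)\circ\mu_{\beta_{0}}$, which shifts the path by one step. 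So $\psi'=\phi^{m}$ for some $m\ne 0$, and the content of the lemma becomes: (i) $|b|=2$; (ii) the leading coefficient of the exchange relation equals $1$; (iii) after possibly interchanging the names of $A_{0}$ and $A_{1}$, $\psi'$ is the generator $\phi^{\pm1}$. Granting these, \cref{cluster transf} gives directly $\mu_{\beta_{0}}^{*}A_{\beta_{0}}=A_{\beta_{0}}^{-1}(A_{\beta_{1}}^{2}+C)$ with $C=\prod_{j:\epsilon_{\beta_{0}j}<0}A_{j}^{-\epsilon_{\beta_{0}j}}$ a monomial in the frozen variables (and $\mu_{\beta_{0}}^{*}$ fixes $A_{\beta_{1}}$ and the frozen variables); composing with $(0\ 1)^{*}$ yields the displayed formula.

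The heart of the proof, and the step I expect to be the main obstacle, is (i)–(ii): showing $|b|=2$ and that there are no arrows from $\beta_{0}$ into the frozen part. I would prove this by tracking the ``off‑block'' entries of the exchange matrix. For $j\in I\setminus\{\beta_{0},\beta_{1}\}$ set $p_{j}:=\epsilon_{\beta_{0}j}$ and $q_{j}:=\epsilon_{\beta_{1}j}$; since $\bi_{0}$ is connected and has at least three vertices, the $p_{j}$ and $q_{j}$ are not all zero. Using the mutation rule one writes down how the triple $(b,p_{j},q_{j})$ transforms under $\mu_{\beta_{0}}$ and under $\mu_{\beta_{1}}$; these formulas are piecewise‑linear in the signs of $p_{j}$ and $q_{j}$, so one splits into cases. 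Because $\psi'=\phi^{m}$ is an automorphism of the object $\bi'$, the composite of these transformations returns $(b,p_{j},q_{j})$ to itself, possibly after the involution $p_{j}\leftrightarrow q_{j}$ coming from the label‑swap in $\phi$. Solving the resulting fixed‑point equations forces $|b|=2$: for $|b|\ge 3$ the induced recursion on $(p_{j},q_{j})$ is governed by a hyperbolic linear map and admits no nonzero periodic orbit, so $p_{j}=q_{j}=0$ for all $j$, contradicting connectedness. The same equations force the sign pattern $q_{j}=-p_{j}$ with $p_{j}\le 0$ for all $j$ (after choosing the labels so that $\mu_{\beta_{0}}$ is the mutation occurring first), whence $\prod_{j:\epsilon_{\beta_{0}j}>0}A_{j}^{\epsilon_{\beta_{0}j}}=1$; this symmetry also shows $\phi$ genuinely lies in $\Gamma_{|\bi'|}$, so $\psi'$ is an honest power of $\phi$.

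Finally, for (iii) one checks that the displayed formula is exactly the action of $\phi^{\pm1}$ (for $|m|\ge 2$ the iterates of $\phi$ produce strictly longer Laurent expressions), so after relabeling $A_{0},A_{1}$ when $m<0$ we obtain $(\psi')^{*}(A_{0},A_{1})=(A_{1},\tfrac{C+A_{1}^{2}}{A_{0}})$ as claimed. I note that only the generator case is needed for the dynamical statement \cref{thm; generalized Dehn}(3), since all nonzero powers of $\phi$ have the same limit point on $\overline{\A'}$.
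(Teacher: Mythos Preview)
Your approach is essentially the paper's: reduce to a rank-$2$ seed with coefficients, then use the condition that $\psi'$ preserves the full exchange matrix to force $|b|=2$ and the sign pattern of the frozen couplings, after which the formula drops out of the mutation rule. The paper, however, is much terser. It simply writes $\psi'=(0\ 1)\circ\mu_0$ and, picking one frozen vertex $i$ connected to the mutable part with $a:=\epsilon_{i0}$, $b:=\epsilon_{1i}$, reads off from the seed-isomorphism condition the two equations $a=b$ and $b-ak=-a$; connectedness gives $a\neq 0$, hence $k=2$ and $\epsilon_{i0}\ge 0$ for every frozen $i$. Your step ruling out $|b|\le 1$ via finite type and your dynamical argument (piecewise-linear recursion on $(p_j,q_j)$, hyperbolic for $|b|\ge 3$) are correct but are replaced in the paper by this two-line computation.

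You are right to flag the point the paper leaves implicit: a priori $\psi'$ is only some power $\phi^m$ of $(0\ 1)\circ\mu_{\beta_0}$, and the bald assertion ``$\psi=(0\ 1)\circ\mu_0$'' is not justified. Your resolution---that the fixed-point constraints on the coefficients force $k=2$ and the symmetric pattern $q_j=-p_j$ regardless of $m$, so that $\phi$ itself lies in $\Gamma_{|\bi'|}$---is the honest way to close this, and your remark that only the dynamics of the generator are needed in \cref{thm; cluster Dehn twists} is exactly how the lemma is used. One small reordering: your claim ``$\Gamma_{|\bi'|}\cong\mathbb Z$, generated by $\phi$'' is premature where you first make it, since at that stage you only know $\Gamma_{|\bi'|}$ acts faithfully on the bi-infinite path, and $\phi$ need not belong to it until after the coefficient symmetry has been established; you do repair this later, but the logic should be presented in that order.
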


\begin{proof}
Take a cluster containing $\{\alpha_1, \dots, \alpha_{n-2}\}$. Let ${\bi}=(I, I_0, \epsilon)$ be the corresponding seed, and $i_j:=[\alpha_j] \in I$ the corresponding vertex for $j=1,\dots, n-2$. Then the cluster reduction produces a new seed ${\bi}':= (I, I_0 \sqcup \{i_1, \dots, i_{n-2}\}, \epsilon)$, whose mutable rank is 2. Label the vertices so that $I-I'_0=\{0,1\}$ and $I'_0=\{2, \dots, N-1\}$, where $I'_0:=I_0 \sqcup \{i_1, \dots, i_{n-2}\}$ and $N$ is the rank of the seed $\bi$. Note that $\psi=(0\ 1)\circ \mu_0 \in \Gamma_{|\bi|}$. Suitably relabeling if necessary, we can assume that $k:=\epsilon_{01} >0$. We claim that $k=2$. Since $\bi$ is connected, there exists a vertex $i \in I'_0$ such that $a:=\epsilon_{i0}\neq 0$ or $b:=\epsilon_{1i}\neq 0$. Since $\psi$ preserves the quiver, we compute that $a=b$ and $b-ak=-a$. Hence we conclude that $k=2$. Then from the definition of the cluster $\A$-transformation we have
\[
\psi^*(A_0, A_1)=\left(A_1, \frac{\prod_{i \in I'_0}A_i^{\epsilon_{i0}}+A_1^2}{A_0} \right)
\]
and $\psi^*(A_i)=A_i$ for all $i \in I'_0$, as desired.
\end{proof}

\begin{thm}\label{thm; cluster Dehn twists}
Let $\bi$ be a skew-symmetric connected seed of mutable rank $n \geq 3$ or the seed of type $L_2$. Then for each cluster Dehn twist $\phi \in \Gamma_{|\bi|}$, there exists a cluster-filling point $[G] \in P\A(\trop)$ such that we have
\[
\lim_{n \to \infty}\phi^{\pm n}(g)=[G]  \text{\ \ in $\overline{\A}$}
\]
for all $g \in \A(\pos)$.
\end{thm}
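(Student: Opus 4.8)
The plan is to reduce both cases to the single two-variable recurrence \eqref{eq: cluster Dehn twists} and to analyze that recurrence by means of a conserved quantity. For the seed of type $L_2$ there is nothing to reduce: the generator already acts on $\A(\pos)$ by $(A_0,A_1)\mapsto(A_1,(1+A_1^2)/A_0)$. For a connected skew-symmetric seed of mutable rank $n\ge 3$ I would first replace $\phi$ by the power $\psi=\phi^l$ furnished by the definition of a cluster Dehn twist, choose a fixed point set $\{\alpha_1,\dots,\alpha_{n-2}\}$ contained in a cluster, and pass to the cluster reduction $\bi'$. By \cref{lem: cluster Dehn twists} the induced class $\psi'\in\Gamma_{|\bi'|}$ acts on the reduced rank-$2$ $\A$-space by \eqref{eq: cluster Dehn twists}, where $C$ is a fixed positive monomial in the frozen variables, and by \cref{embedding} the homeomorphisms $\A'(\pos)\cong\A(\pos)$ and $\A'(\trop)\cong\A(\trop)$ are $\Gamma_{|\bi'|}$-equivariant. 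Thus it suffices to study the orbit of the map \eqref{eq: cluster Dehn twists} and to transport the conclusion back through \cref{embedding}.

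Writing the orbit of $\psi'$ as a sequence $(A_m)$ with $A_{m+1}=(C+A_m^2)/A_{m-1}$, I would exhibit the conserved quantity
\[
K:=\frac{A_m^2+A_{m-1}^2+C}{A_mA_{m-1}},
\]
verify by direct substitution that $K$ is independent of $m$, and note $K>2$ since $A_m^2+A_{m-1}^2\ge 2A_mA_{m-1}$ and $C>0$. Every orbit therefore lies on the hyperbola $x^2-Kxy+y^2+C=0$ in the positive quadrant, whose two ends approach the asymptotic slopes $\rho_\pm=(K\pm\sqrt{K^2-4})/2$ with $\rho_+\rho_-=1$. From $A_{m+1}/A_m=A_m/A_{m-1}+C/(A_mA_{m-1})$ one sees that $A_m/A_{m-1}\nearrow\rho_+$ as $m\to+\infty$ and $\searrow\rho_-$ as $m\to-\infty$; in either direction $\log A_m\sim|m|\log\rho_+$ while $\log A_m-\log A_{m-1}$ stays bounded. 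Hence in tropical $\A$-coordinates the vectors $(\log A_{m-1},\log A_m)$ converge projectively, both as $m\to+\infty$ and as $m\to-\infty$, to the single eigendirection $(1,1)$ of the parabolic matrix $\begin{pmatrix}0&1\\-1&2\end{pmatrix}$. This yields a common limit $[G]\in P\A(\trop)$ for $\psi'^{\pm m}$, hence for $\psi^{\pm m}$ through \cref{embedding}. To upgrade this to $\phi^{\pm n}g\to[G]$ I would use that $\phi$ commutes with $\psi=\phi^l$, so $\phi$ permutes $\psi$-orbits and must fix their unique forward/backward limit $[G]$; writing $n=lm+r$ and invoking continuity of the $\phi$-action on $\overline{\A}$ then gives convergence of the full sequence.

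It remains to show that $[G]$ is cluster-filling, i.e. $a_\alpha(G)\ne 0$ for every vertex $\alpha$ of the cluster complex, which I would establish by computing the tropical $\A$-coordinates of $G$ directly along the reduced $L_2$-line and identifying these vertices inside $\A(\trop)$ via \cref{embedding}. Normalizing the representative of $[G]$ so that $(a_0,a_1)(G)=(1,1)$, I first observe that the frozen coordinates are constant along the orbit, so $a_i(G)=0$ for every frozen $i$ and therefore $\Trop(C)(G)=\sum_i\epsilon_{i0}\,a_i(G)=0$. Tropicalizing the recurrence gives $a_{m+1}=\max(\Trop(C),2a_m)-a_{m-1}$, so at $G$ this degenerates to $a_{m+1}=\max(0,2a_m)-a_{m-1}$. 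An immediate induction in both directions from $a_0=a_1=1$ yields $a_m=1$ for all $m$, in particular $a_m(G)\ne 0$ at every vertex of the line. Transporting this non-vanishing through the equivariant homeomorphism of \cref{embedding} shows $a_\alpha(G)\ne 0$ for all $\alpha$, so $[G]$ is cluster-filling as claimed.

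The step I expect to be the main obstacle is the common-limit claim: one must prove that the forward and backward orbits of \eqref{eq: cluster Dehn twists} approach the \emph{same} projective direction, which is precisely the algebraic signature distinguishing a Dehn twist (a single parabolic fixed direction) from a cluster-pA class (two distinct fixed directions). The conserved quantity $K$ and the resulting hyperbola make this transparent, but some care is needed to control the correction term $C/(A_mA_{m-1})$ so that the ratio $A_m/A_{m-1}$ genuinely converges to $\rho_+$ rather than merely staying bounded, and so that the bounded difference $\log A_m-\log A_{m-1}$ becomes negligible after projectivization. The passage from $\psi=\phi^l$ back to $\phi$, and the observation that $\Trop(C)(G)=0$ collapses the tropical recurrence to the constant solution, are comparatively routine once this step is in place.
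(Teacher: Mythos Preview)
Your convergence argument is correct and more explicit than the paper's. The paper simply passes to log-coordinates, writes down the recurrence, and asserts by ``direct computation'' that $a_0^{(m)},a_1^{(m)}\to\infty$ with $a_0^{(m)}/a_1^{(m)}\to 1$. You instead exhibit the conserved quantity $K=(A_m^2+A_{m-1}^2+C)/(A_mA_{m-1})>2$ and read off the asymptotics from the conic $x^2-Kxy+y^2+C=0$: the relation $r_m+1/r_m+C/(A_mA_{m-1})=K$ (with $r_m=A_m/A_{m-1}$) immediately traps $r_m$ in $(\rho_-,\rho_+)$ and your formula $r_{m+1}=r_m+C/(A_mA_{m-1})$ gives monotonicity, from which $r_m\to\rho_+$, $A_m\to\infty$, and the common projective limit $(1,1)$ all follow. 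This is a genuinely different and cleaner route than the paper's; it also explains conceptually why the forward and backward limits coincide (the tropicalization is the parabolic matrix $\bigl(\begin{smallmatrix}0&1\\-1&2\end{smallmatrix}\bigr)$), and you correctly handle the passage from $\psi=\phi^l$ back to $\phi$, which the paper omits entirely.

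There is, however, a real gap in your cluster-filling argument. Your tropical recurrence $a_{m+1}=\max(0,2a_m)-a_{m-1}$ with $a_0=a_1=1$ shows $a_m=1$ for every vertex of the \emph{reduced} cluster complex $\C_{|\bi'|}$ (the infinite $L_2$-line). But the statement is about $\bi$, not $\bi'$: the homeomorphism of \cref{embedding} identifies the $\A$-spaces, not the cluster complexes, and $\C_{|\bi'|}$ is only the link of $\{\alpha_1,\dots,\alpha_{n-2}\}$ inside $\C_{|\bi|}$. The newly frozen vertices $\alpha_1,\dots,\alpha_{n-2}$ are mutable in $\bi$, hence lie in $V(\C_{|\bi|})$, and you yourself compute $a_{\alpha_j}(G)=0$. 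Thus $G$ is \emph{not} cluster-filling with respect to $\C_{|\bi|}$ when $n\ge 3$; concretely, in the Dehn-twist example that follows the theorem, $[G]$ is the class of the core curve $C$, which has zero intersection with the boundary arcs $2$ and $3$ of the annulus. The paper's own proof does not address this point either, so your argument actually establishes everything the paper proves; the cluster-filling assertion holds as stated only in the $L_2$ case.
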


\begin{proof}
Assume that $n \geq 3$. There exists a number $\l$ such that $\psi:=\phi^l$ satisfies the assumption of \cref{lem: cluster Dehn twists}. Let as consider the following recurrence relation:
\[
\begin{cases}
a_0^{(n)}&=-a_1^{(n-1)}, \\
a_1^{(n)}&=-a_0^{(n-1)}+\log(C+e^{2a_1^{(n-1)}}),
\end{cases}
\]
where $C>0$ is a positive constant. It is the log-dynamics of \cref{eq: cluster Dehn twists}. Then one can directly compute that $a_0^{(n)}$, $a_1^{(n)}$ goes to infinity and $a_0^{(n)}\slash a_1^{(n)} \to 1$ as $n \to \infty$ for arbitrary initial real values. Hence we conclude that $\psi^n(g) \to [G]$ in $\overline{\A}$ for all $g \text{ in }\A(\pos)$, where $G \in \A(\trop)$ is the point whose coordinates are $a_0=a_1=1$, $a_i=0$ for all $i \in I'_0$. The proof for the negative direction is similar. The generator of $\Gamma_{L_2}$, which is cluster-pA, also satisfies the desired property.
\end{proof}

\begin{ex}[Dehn twists in the mapping class group]
Let $F=F_g^s$ be a hyperbolic surface with $s \geq2$. For an essential non-separating simple closed curve $C$, we denote the right hand Dehn twist along $C$ by $t_C \in MC(F)$. Consider an annular neighborhood $\mathcal{N}(C)$ of $C$, and slide two of punctures so that exactly one puncture lies on each boundary component of $\mathcal{N}(C)$. Let $\Delta$ be an ideal triangulation obtained by gluing the ideal triangulation of $\mathcal{N}(C)$ shown in \cref{fig; Dehn} and an arbitrary ideal triangulation of $F \setminus \mathcal{N}(C)$. This kind of a triangulation is given in ~\cite{Kash01}. Then the Dehn twist is represented as $t_C=(0\ 1)\circ \mu_0$, hence it is a cluster Dehn twist with $l=1$. Its action on the $\A$-space is represented as
\[
\phi_1^*(A_0, A_1, A_2, A_3)=\left(A_1,\frac{A_2A_3+A_1^2}{A_0}, A_2, A_3\right).
\]
\end{ex}
\begin{figure}
\[
\begin{tikzpicture}
\draw(0,0) circle(0.5cm) node[midway,right]{2};
\draw(0,0) circle(2cm);
\draw(0,0) circle(1cm)[thick];
\path (0,-1.2) node[circle]{$C$};
\draw (0.5,0) -- (2,0) node[midway,above] {0};
\draw (0.5,0) to[out=30, in=0] (0,1.5) to[out=180, in=90]  node[above] {1} (-1.5,0) to[out=270, in=180] (0,-1.5) to[out=0, in=210] (2.0,0) ;
\path (2,0) node[right]{3};
\end{tikzpicture}
\]
\caption{ideal triangulation of $\mathcal{N}(C)$}
\label{fig; Dehn}
\end{figure}
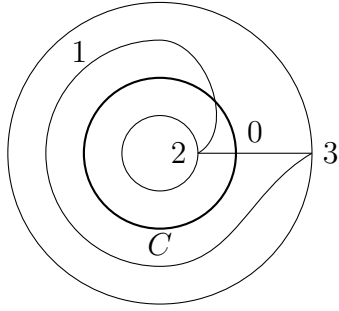

\begin{ex}[Type $X_7$]
Let us consider the seed of \emph{type $X_7$}. The mapping class $\phi_1:=(1\ 2)\circ \mu_1 \in \Gamma_{X_7}$ is a cluster Dehn twist, whose action on the $\A$-space is represented as
\[
\phi_1^*(A_0, A_1, A_2)=\left(A_0, A_2,\frac{A_0+A_2^2}{A_1}\right).
\]
\end{ex}

For a general cluster-pA class, we only know that it has at least one fixed point on the tropical boundary $P(\X(\trop)\backslash \X(\trop)_+$ from \cref{redX}. It would be interesting to find an analogue of the \emph{pA-pair} for a cluster-pA class which satisfies an appropriate condition, as we find in the surface theory (see \cref{classical NT}).

\section{Basic examples: seeds associated with triangulated surfaces}\label{section: Teich}
In this section we describe an important family of examples strongly related to the \Teich theory, following ~\cite{FST08}. A geometric description of the positive real parts and the tropical spaces associated with these seeds is presented in \cref{appendix: Teich}, which is used in \cref{Teich are Teich,comparison}.
In \cref{Teich are Teich} we prove that these seeds are of \Teich type. In \cref{comparison}, we compare the Nielsen-Thurston types defined in \cref{section: NT types} with the classification of mapping classes. In these cases, the characterization of periodic classes described in \cref{periodic} is complete. We show that cluster-reducible classes are reducible.

\subsection{Definition of the seed}\label{def: Teich seed} 
A \emph{marked hyperbolic surface} is a pair $(F, M)$, where $F=F_{g,b}^{p}$ is an oriented surface of genus $g$ with $p$ punctures and $b$ boundary components satisfying $6g-6+3b+3p+D>0$ and $p+b>0$, and $M \subset \partial F$ is a finite subset such that each boundary component has at least one point in $M$. The punctures together with elements of $M$ are called \emph{marked points}. We denote a marked hyperbolic surface by $F_{g, \vec{\delta}}^p$, where $\vec{\delta}=(\delta_1, \cdots, \delta_b)$, $\delta_i:= |M \cap \partial_i|$ indicates the number of marked points on the $i$-th boundary component. A connected component of $\partial F \backslash M$ is called a \emph{boundary segment}. We denote the set of boundary segments by $B(F)$, and fix a numbering on its elements. Note that $|B(F)|=D$, where $D:=\sum_{i=1}^b \delta_i$.
\begin{dfn}[the seed associated with an ideal triangulation]\ {} %\+space

\begin{enumerate}
\item An ideal arc on $F$ is an isotopy class of an embedded arc connecting marked points, which is neither isotopic to a puncture, a marked point, nor an arc connecting two consecutive marked points on a common boundary component. An ideal triangulation of $F$ is a family $\Delta=\{ \alpha_{i} \}_{i=1}^{n}$ of ideal arcs, such that each connected component of $F \backslash \bigcup \alpha_{i}$ is a triangle whose vertices are marked points of $F$. One can verify that such a triangulation exists and that $n=6g-6+3r+3s+D$ by considering the Euler characteristic.
\item For an ideal triangulation $\Delta$ of $F$, we define a skew-symmetric seed $\bi_\Delta=(\Delta\cup B(F), B(F), \epsilon=\epsilon_\Delta)$ as follows. For an arc $\alpha$ of $\Delta$ which is contained in a self-folded triangle in $\Delta$ as in \cref{fig:self-folded}, let $\pi_\Delta(\alpha)$ be the loop enclosing the triangle. Otherwise we set $\pi_\Delta(\alpha):=\alpha$. Then for a non-self-folded triangle $\tau$ in $\Delta$, we define 
\[
\epsilon_{ij}^\tau:=
\begin{cases}
1, & \text{if $\tau$ contains $\pi_\Delta(\alpha_i)$ and $\pi_\Delta(\alpha_j)$ on its boundary in the clockwise order,} \\
-1, &  \text{if the same holds, with the anti-clockwise order,} \\
0, & \text{otherwise.}
\end{cases}
\]
Finally we define $\epsilon_{ij}:= \sum_\tau \epsilon_{ij}^\tau$, where the sum runs over non-self-folded triangles in $\Delta$.
\end{enumerate}
\end{dfn}

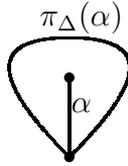
\begin{figure}[h] 
\begin{center} 
\setlength{\unitlength}{1.5pt} 
\begin{picture}(40,27)(-20,3) 
\thicklines 

\qbezier(0,0)(-30,30)(0,30)
\qbezier(0,0)(30,30)(0,30)
\put(0,0){\line(0,1){20}}
 
\put(3,13){\makebox(0,0){$\alpha$}}
\put(3,35){\makebox(0,0){$\pi_\Delta(\alpha)$}}
\multiput(0,0)(0,20){2}{\circle*{2}} 
\end{picture} 

\end{center} 
\caption{Self-folded triangle} 
\label{fig:self-folded} 
\end{figure} 

For an arc $\alpha$ of an ideal triangulation $\Delta$ which is a diagonal of an immersed quadrilateral in $F$ (in this case the quadrilateral is unique), we get another ideal triangulation $\Delta^{'}:= (\Delta \backslash \{ \alpha \})\cup \{ \beta \}$ by replacing $\alpha$ by the other diagonal $\beta$ of the quadrilateral. We call this operation the \emph{flip} along the arc $\alpha$.
One can directly check that the flip along the arc $\alpha_k$ corresponds to the mutation of the corresponding seed directed to the vertex $k$.

\begin{thm}[~\cite{Harer86,Hatcher91,Penner})]\label{Whitehead}
Any two ideal triangulations of $F$ are connected by a finite sequence of flips and relabellings.
\end{thm}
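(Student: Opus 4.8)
The statement to prove is the classical connectivity theorem: any two ideal triangulations of a marked hyperbolic surface $F$ are connected by a finite sequence of flips and relabellings. Since the paper attributes this to Harer, Hatcher, and Penner, the plan is to give the standard argument via the contractibility (or at least connectivity) of the \emph{arc complex}, reducing the statement to a well-known topological fact rather than reconstructing the original combinatorial proofs. The key object is the simplicial complex whose vertices are ideal arcs and whose top simplices correspond to ideal triangulations; two triangulations sharing a codimension-one face differ by a single flip.

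First I would set up the arc complex $\mathrm{Arc}(F)$ on the marked surface $F_{g,\vec\delta}^p$: a $k$-simplex is a collection of $k+1$ distinct, pairwise non-isotopic, pairwise disjointly realizable ideal arcs. The crucial input is that $\mathrm{Arc}(F)$ (or the subcomplex of \emph{filling} arc systems, i.e. the arc complex minus the locus of non-filling systems) is connected — in fact contractible — by the work of Harer and Hatcher, with Penner's convex-hull/decorated-Teichm\"uller-space construction giving a cell decomposition of decorated Teichm\"uller space $\A(\pos)$ dual to it. Ideal triangulations are exactly the top-dimensional simplices, of dimension $n-1$ with $n = 6g-6+3p+3s+D$, and any codimension-one face of such a simplex extends to exactly two top simplices, which differ precisely by the flip along the missing arc (the immersed quadrilateral around that arc). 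Second, I would argue that the dual graph of the top simplices of the filling arc complex is connected: given two ideal triangulations $\Delta,\Delta'$, connectedness of the arc complex gives an edge-path between the corresponding vertices; by a standard general-position / link argument (pushing the path into the codimension-$\le 1$ skeleton and using that the complex near a top simplex looks like a fan) one converts this into a sequence of flips. Finally, relabellings account for the difference between unlabelled triangulations and the labelled seeds $\bi_\Delta$, so the sequence of flips lifts to a sequence of flips-and-relabellings on seeds.

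The main obstacle — and the reason this is cited rather than reproved — is establishing the connectivity of the (filling) arc complex and the fact that its dual graph under the flip relation is connected; the honest route passes through Penner's cell decomposition of decorated Teichm\"uller space, whose contractibility forces connectivity of the dual complex, together with a transversality argument handling the non-filling strata (where a naive flip path might try to pass through a lower-dimensional arc system). The bookkeeping of self-folded triangles, punctures versus boundary marked points, and degenerate low-complexity cases ($F_{1,1}^0$-type surfaces, once-punctured surfaces with small $D$) is routine but must be checked so that the inequality $6g-6+3b+3p+D>0$ genuinely guarantees that flips are always available and that $\mathrm{Arc}(F)$ is nonempty and connected. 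I would therefore present the proof as: (i) recall Penner's decorated Teichm\"uller cell decomposition and its contractibility; (ii) deduce connectivity of the dual flip graph on ideal triangulations; (iii) lift to labelled seeds by inserting relabellings, invoking \cref{Whitehead}'s sources for the technical input rather than redoing it.
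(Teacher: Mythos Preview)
The paper does not prove \cref{Whitehead} at all: it is stated with attribution to Harer, Hatcher, and Penner and then used as a black box (to conclude that the equivalence class $|\bi_\Delta|$ depends only on $F$). So there is no ``paper's own proof'' to compare against.

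Your sketch is a fair outline of the standard route found in the cited sources: one shows the arc complex (or its filling part) is contractible --- Hatcher's flow argument, or Penner's convex-hull cell decomposition of decorated \Teich space --- and then deduces that the dual graph on maximal simplices is connected, i.e.\ that flips connect any two triangulations. That is exactly the content behind the citations. One small caveat: the passage from ``the arc complex is connected'' to ``the flip graph on top simplices is connected'' is not automatic for an arbitrary simplicial complex; you correctly flag that one needs either the stronger contractibility statement plus the fact that the complex is a pseudomanifold near its top cells, or a direct argument (as in Hatcher) that any two triangulations sharing $n-1$ arcs differ by a flip and that one can pass between arbitrary triangulations by successively replacing arcs. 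Since you explicitly defer the technical input to the cited references, your proposal is appropriate for a result the paper itself only quotes.
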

Hence the equivalence class of the seed $\bi_\Delta$ is determined by the marked hyperbolic surface $F$, independent of the choice of the ideal triangulation. We denote the resulting cluster ensemble by $p=p_{F}: \A_{F} \to \X_{F}$, the cluster modular group by $\Gamma_{F}:=\Gamma_{|\bi_\Delta|}$, \emph{etc}. The rank and the mutable rank of the seed $\bi_\Delta$ are $N=|\Delta \cup B(F)|=6g-6+3b+3p+2D$ and $n=|\Delta|=6g-6+3b+3p+D$, respectively.

Though a flip induces a mutation, not every mutation is realized by a flip. Indeed, the existence of an arc contained in a self-folded triangle prevents us from performing the flip along such an arc. Therefore we generalize the concept of ideal triangulations, following ~\cite{FST08}.

\begin{dfn}[tagged triangulations]\ {}
\begin{enumerate}
\item A tagged arc on $F$ is an ideal arc together with a label $\{\text{plain, notched}\}$ assigned to each of its end, satisfying the following conditions:
\begin{itemize}
\item the arc does not cut out a once-punctured monogon as in \cref{fig:self-folded},
\item each end which is incident to a marked point on the boundary is labeled plain, and
\item both ends of a loop are labeled in the same way.
\end{itemize}
The labels are called \emph{tags}. 
\item The \emph{tagged arc complex} $\Arc$ is the clique complex for an appropriate  compatibility relation on the set of tagged arcs on $F$. Namely, the vertices are tagged arcs and the collection $\{\alpha_1, \cdots, \alpha_k\}$ spans a $k$-simplex if and only if they are mutually compatible. See, for the definition of the compatibility, ~\cite{FST08}. The maximal simplices are called \emph{tagged triangulations} and the codimension 1 simplices are called \emph{tagged flips}.
\end{enumerate}
\end{dfn}
Note that if the surface $F$ has no punctures, then each tagged triangulation has only plain tags. 
If $F$ has at least two punctures or it has non-empty boundary, then the tagged arc complex typically contains a cycle (which we call a \emph{$\diamondsuit$-cycle}) shown in the right of \cref{fig; diamond-cycle}. Here by convention, the plain tags are omitted in the diagram while the notched tags are represented by the $\bowtie$ symbol. Compare with the ordinary arc complex, shown in the left of \cref{fig; diamond-cycle}. Compatibility relation implies that for a compatible set of tagged arcs and each puncture $a$, either one of the followings hold.
\begin{enumerate}
\item[(a)]All tags at the puncture $a$ are plain.
\item[(b)]All tags at the puncture $a$ are notched.
\item[(c)]The number of arcs incident to the puncture $a$ is at most two, and their tags at the puncture $a$ is different.
\end{enumerate}

%%%%%%%%%%%%%%%%%%%%
\begin{figure}[h]
\unitlength 0.7mm
\begin{subfigure}[b]{.495\linewidth}
\centering
{\begin{picture}(100,100)(-20,-20)
\put(10,30){\line(4, 3){20}} \put(0,30){\makebox(0,0)[cc]{
\begin{tikzpicture}[scale=0.5]%Delta1
\fill (0,0) circle(2pt) coordinate(A);
\fill (0,2) circle(2pt) coordinate(B);
\fill (0,4) circle(2pt) coordinate(C);
\coordinate (D) at (-1,2);
\coordinate (E) at (1,2);
\coordinate (F) at (0,2.5);
\draw (A) to[out=90, in=190] (F) to[out=0, in=90] (A) ;
\draw (A)--(B) ;
\draw (A) to[out=90, in=270] (D) to[out=90, in=270] (C);
\draw (A) to[out=90, in=270] (E) to[out=90, in=270] (C);
%\path (B) node[below]{$\bowtie$};
\path (0,-1) node[above]{$\Delta_1^\circ$};
\end{tikzpicture}
}}
 \put(30,65){\makebox(0,0)[cc]{
\begin{tikzpicture}[scale=0.5]%Delta4
\fill (0,0) circle(2pt) coordinate(A);
\fill (0,2) circle(2pt) coordinate(B);
\fill (0,4) circle(2pt) coordinate(C);
\coordinate (D) at (-1,2);
\coordinate (E) at (1,2);
\draw (A)--(B);
\draw (B)--(C);
\draw (A) to[out=90, in=270] (D) to[out=90, in=270] (C);
\draw (A) to[out=90, in=270] (E) to[out=90, in=270] (C);
\path (0,5) node[below]{$\Delta_2^\circ=\Delta_4^\circ$};
\end{tikzpicture}
}}

\put(50,30){\line(-4,3){20}}\put(62,30){\makebox(0,0)[cc]{
\begin{tikzpicture}[scale=0.5]%Delta3
\fill (0,0) circle(2pt) coordinate(A);
\fill (0,2) circle(2pt) coordinate(B);
\fill (0,4) circle(2pt) coordinate(C);
\coordinate (D) at (-1,2);
\coordinate (E) at (1,2);
\coordinate (F) at (0, 1.5);
%\draw (B) to[out=10, in=270]  (C);
\draw (B)--(C) ;
\draw (C) to[out=270, in=180] (F) to[out=0, in=270] (C);
\draw (A) to[out=90, in=270] (D) to[out=90, in=270] (C);
\draw (A) to[out=90, in=270] (E) to[out=90, in=270] (C);
%\path (B) node[above]{$\bowtie$};
\path (0,-1) node[above]{$\Delta_3^\circ$};
\end{tikzpicture}
}}

\end{picture}}
\end{subfigure}
\begin{subfigure}[b]{.495\linewidth}
\centering
{\begin{picture}(100,100)(-20,-20)
\put(10,30){\line(4, 3){20}} \put(0,30){\makebox(0,0)[cc]{
\begin{tikzpicture}[scale=0.5]%Delta1
\fill (0,0) circle(2pt) coordinate(A);
\fill (0,2) circle(2pt) coordinate(B);
\fill (0,4) circle(2pt) coordinate(C);
\coordinate (D) at (-1,2);
\coordinate (E) at (1,2);
\draw (A) to[out=90, in=190] (B) ;
\draw (A)--(B) ;
\draw (A) to[out=90, in=270] (D) to[out=90, in=270] (C);
\draw (A) to[out=90, in=270] (E) to[out=90, in=270] (C);
\path (B) node[below]{$\bowtie$};
\path (0,-1) node[above]{$\Delta_1$};
\end{tikzpicture}
}}
\put(10,30){\line(4,-3){20}} \put(26,62){\makebox(0,0)[cc]{
\begin{tikzpicture}[scale=0.5]%Delta4
\fill (0,0) circle(2pt) coordinate(A);
\fill (0,2) circle(2pt) coordinate(B);
\fill (0,4) circle(2pt) coordinate(C);
\coordinate (D) at (-1,2);
\coordinate (E) at (1,2);
\draw (A)--(B)node[midway,right]{$\alpha$};
\draw (B)--(C)node[midway,right]{$\beta$};
\draw (A) to[out=90, in=270] (D) to[out=90, in=270] (C);
\draw (A) to[out=90, in=270] (E) to[out=90, in=270] (C);
\path (-1,2) node[left]{$\Delta_4$};
\end{tikzpicture}
}}

\put(50,30){\line(-4,3){20}}\put(62,30){\makebox(0,0)[cc]{
\begin{tikzpicture}[scale=0.5]%Delta3
\fill (0,0) circle(2pt) coordinate(A);
\fill (0,2) circle(2pt) coordinate(B);
\fill (0,4) circle(2pt) coordinate(C);
\coordinate (D) at (-1,2);
\coordinate (E) at (1,2);
\draw (B) to[out=10, in=270]  (C);
\draw (B)--(C) ;
\draw (A) to[out=90, in=270] (D) to[out=90, in=270] (C);
\draw (A) to[out=90, in=270] (E) to[out=90, in=270] (C);
\path (B) node[above]{$\bowtie$};
\path (0,-1) node[above]{$\Delta_3$};
\end{tikzpicture}
}}
\put(50,30){\line(-4,-3){20}}\put(26,-3){\makebox(0,0)[cc]{
\begin{tikzpicture}[scale=0.5]%Delta2
\fill (0,0) circle(2pt) coordinate(A);
\fill (0,2) circle(2pt) coordinate(B);
\fill (0,4) circle(2pt) coordinate(C);
\coordinate (D) at (-1,2);
\coordinate (E) at (1,2);
\draw (A)--(B)  --(C);
\draw (A) to[out=90, in=270] (D) to[out=90, in=270] (C);
\draw (A) to[out=90, in=270] (E) to[out=90, in=270] (C);
\path (B) node[above]{$\bowtie$};
\path (B) node[below]{$\bowtie$};
\path (-1,2) node[left]{$\Delta_2$};
\end{tikzpicture}
}}
\end{picture}}
\end{subfigure}
\caption{ $\diamondsuit$-cycle}
\label{fig; diamond-cycle}

\end{figure}
%%%%%%%%%%%%%%%%%%%%

%\begin{thm}[Fomin-Shapiro-Thurston ~\cite{FST08} Theorem 7.9]
%For a marked surface $\surf$, the tagged arc complex is a pseudomanifold of dimension $N$. Namely, each maximal simplex has dimension $N$ and each codimension 1 simplex is contained in precisely two maximal simplices.
%\end{thm}

\begin{dfn}[the seed associated with a tagged triangulation]\ {}
For a tagged triangulation $\Delta$, let $\Delta^{\circ}$ be an ideal triangulation obtained as follows:
\begin{itemize}
\item replace all tags at a puncture $a$ of type (b) by plain ones, and
\item for each puncture $a$ of type (c), replace the arc $\alpha$ notched at $a$ (if any) by a loop enclosing $a$ and $\alpha$.
\end{itemize}
A tagged triangulation $\Delta$ whose tags are all plain is naturally identified with the corresponding ideal triangulation $\Delta^\circ$. For a tagged triangulation $\Delta$ with a fixed numbering on the member arcs, we define a skew-symmetric seed by ${\bi}_\Delta =(\Delta \cup B(F),B(F), \epsilon:= \epsilon_{\Delta_{\circ}})$.

\end{dfn}

Then we get a complete description of the cluster complex associated with the seed $\bi_\Delta$ in terms of tagged triangulations:
\begin{thm}[Fomin-Shapiro-Thurston ~\cite{FST08} Proposition 7.10, Theorem 7.11]\label{Arc}\ {}
For a marked hyperbolic surface $F=\surf$, the tagged arc complex has exactly two connected components (all plain/all notched) if $F=F_{g,0}^1$, and otherwise is  connected. The cluster complex associated with the seed $\bi_\Delta$ is naturally identified with a connected component of the tagged arc complex $\Arc$ of the surface $F$. Namely,
\[
\begin{cases}
\C_{F} \cong \mathrm{Arc}(F) & \text{if $F=F_{g,0}^1$},\\
\C_{F} \cong \Arc & \text{otherwise}.
\end{cases}
\]
\end{thm}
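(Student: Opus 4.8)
The plan is to assemble this statement — which is entirely due to \cite{FST08} — from two ingredients of the theory of cluster algebras from surfaces: the matching of seed mutations with tagged flips, and the connectivity analysis of the tagged arc complex $\Arc$. Concretely, I would fix an ideal triangulation $\Delta_0$ of $F$, identify $\bi_{\Delta_0}$ with the base vertex of the tree $\bS^\vee$ of \cref{cluster complex}, and build a map $\Phi\colon\bS\to\Arc$ by reading off, along each edge of $\bS^\vee$ (a mutation of the seed), the corresponding tagged flip of triangulations started at $\Delta_0$. That a mutation of $\bi_\Delta$ always \emph{is} a tagged flip, even when the ideal flip is obstructed by a self-folded triangle as in \cref{fig:self-folded}, is precisely the reason for passing to tagged arcs; it is checked by comparing the exchange-matrix mutation rule with the combinatorial effect of a tagged flip on the signed adjacency matrix of $\Delta^\circ$, following \cite{FST08}.

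Next I would descend $\Phi$ to the cluster complex. On vertices, $\Phi$ assigns to the vertex carrying a cluster $\A$-variable $A$ the tagged arc that $A$ represents via its $\lambda$-length in Penner's decorated \Teich coordinates on $\A_F(\pos)$; vertices in the same $\Delta$-orbit carry the same $A$, so $\Phi$ descends to a simplicial map $\bar\Phi\colon\C_F\to\Arc$. By the tagged analogue of \cref{Whitehead} proved in \cite{FST08}, the image of $\bar\Phi$ is a full connected component: every tagged triangulation reachable from $\Delta_0$ by tagged flips is hit (with the exception discussed below when $F=F_{g,0}^1$). It then remains to show that $\bar\Phi$ is injective and that a set of tagged arcs spans a simplex of $\Arc$ exactly when the matching $\A$-variables lie in a common cluster — equivalently, that the exchange graph of the cluster algebra of $\bi_\Delta$ coincides with the tagged flip graph.

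This last point is the substance of the theorem. Injectivity says distinct tagged triangulations never yield the same labelled tuple of cluster $\A$-variables, and the compatibility statement says the exchange graph carries no identifications beyond those coming from tagged flips. Both are established in \cite{FST08} by working in $\lambda$-length coordinates: the $\lambda$-lengths of the arcs of a tagged triangulation form a coordinate system on (a cover of) decorated \Teich space, the Ptolemy relations realize exactly the $\A$-mutations, and the resulting cluster pattern is rigid enough — using Fomin--Zelevinsky's general results on exchange graphs together with the explicit hyperbolic geometry — that no two tagged triangulations can be confused. I would invoke this rather than reprove it.

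Finally, I would determine $\pi_0(\Arc)$. If $F$ has a second puncture or nonempty boundary, any lone notched tag can be undone by running around a $\diamondsuit$-cycle (\cref{fig; diamond-cycle}) anchored at that extra marked point, so all tagged triangulations lie in one flip class and $\Arc$ is connected; if $F=F_{g,0}^1$ there is no such anchor, a tagged flip can never create a puncture of type (c) (which would require two incident arcs with opposite tags, impossible with a single puncture and no boundary), so the all-plain and the all-notched triangulations form two components, interchanged by the global tag-reversal involution. Hence $\Arc$ has two components when $F=F_{g,0}^1$ and one otherwise, and $\C_F\cong\Arc$ (resp. a component of it). I expect the ``no collisions'' step to be the real obstacle: it cannot be carried out by formal manipulation of the mutation rule and genuinely needs the $\lambda$-length model and exchange-graph theory of \cite{FST08}; everything else is bookkeeping about flips and the $\diamondsuit$-cycle.
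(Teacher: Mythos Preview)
The paper offers no proof of this theorem: it is stated as a result of Fomin--Shapiro--Thurston and cited to \cite{FST08}, Proposition~7.10 and Theorem~7.11, so there is nothing in the paper's own text to compare your proposal against. Your outline is a faithful sketch of the \cite{FST08} argument --- matching seed mutations with tagged flips, descending from $\bS$ to $\C_F$, invoking the $\lambda$-length model and exchange-graph rigidity for the injectivity (``no collisions'') step, and handling $\pi_0(\Arc)$ via the $\diamondsuit$-cycle --- and you correctly flag where the real content lies. One minor sharpening for the $F=F_{g,0}^1$ case: the obstruction to mixed tags is that every arc is a loop (so its two tags agree by definition), and then compatibility at the unique puncture forces all arcs in a compatible family to carry the \emph{same} tag; your ``type~(c)'' phrasing gestures at this but could be stated more directly.
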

The coordinate groupoid of the seed $\bi_\Delta$ is denoted by $\mathcal{M}^{\bowtie}(F)$, and called \emph{tagged modular groupoid}. The subgroupoid $\mathcal{M}(F)$ whose objects are ideal triangulations and morphisms are (ordinary) flips is called the \emph{modular groupoid}, which is described in ~\cite{Penner}.
Next we see that the cluster modular group associated with the seed $\bi_\Delta$ is identified with some extension of the mapping class group.

\begin{dfn}[the tagged mapping class group]
The group $\{\pm 1\}^p$ acts on the tagged arc complex by alternating the tags at each puncture. The mapping class group naturally acts on the tagged arc complex, as well on the group $\{\pm 1\}^p$ by $(\phi_*\epsilon)(a):=\epsilon(\phi(a))$. 
Then the induced semidirect product $MC^{\bowtie}(F):=MC(F) \ltimes \{\pm 1\}^p$ is called the \emph{tagged mapping class group}. The tagged mapping class group naturally acts on the tagged arc complex.
\end{dfn}

\begin{prop}[Bridgeland-Smith ~\cite{BS15} Proposition 8.5 and 8.6]\label{MCG}
The cluster modular group associated with the seed $\bi_\Delta$ is naturally identified with the subgroup of the tagged mapping class group $MC^{\bowtie}(F)$ of $F$ which consists of the elements that preserve connected components of $\Arc$.
Namely,
\[
\begin{cases}
\Gamma_{F} \cong MC(F) & \text{if $F=F_{g,0}^1$},\\
\Gamma_{F} \cong MC^{\bowtie}(F) & \text{otherwise}.
\end{cases}
\]
\end{prop}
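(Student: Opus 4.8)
The plan is to realize both $\Gamma_F$ and the pertinent subgroup of $MC^{\bowtie}(F)$ as one and the same group of simplicial automorphisms of the cluster complex. Write $G\leq MC^{\bowtie}(F)$ for the subgroup of tagged mapping classes that preserve the connected components of $\Arc$; thus $G=MC^{\bowtie}(F)$ when $F\neq F_{g,0}^1$, while $G=MC(F)$ has index $2$ when $F=F_{g,0}^1$, the $\{\pm 1\}$-factor there interchanging the all-plain and all-notched components. By \cref{action on cluster complex} the group $\Gamma_F$ acts simplicially on $\C_F$, and by \cref{Arc} the realization $|\C_F|$ is, under an identification fixed once and for all, the connected component of $\Arc$ containing the initial tagged triangulation $\Delta$, on which $G$ also acts simplicially. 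It therefore suffices to produce a homomorphism $\Theta\colon G\to\Gamma_F$ that intertwines the $G$-action with the $\Gamma_F$-action on $|\C_F|$ and to prove it bijective.

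To define $\Theta$, fix $\phi\in G$. Then $\phi(\Delta)$ lies in the component $|\C_F|$, so --- the dual graph $\C_F^\vee$ being connected --- some finite sequence of tagged flips joins $\Delta$ to $\phi(\Delta)$, i.e.\ a composite of mutations $\mu=\mu_{i_k}\cdots\mu_{i_1}\colon\bi_\Delta\to\bi_{\phi(\Delta)}$. As $\phi$ is an orientation-preserving homeomorphism, it carries the combinatorial data defining $\bi_\Delta$ onto that defining $\bi_{\phi(\Delta)}$, so matching labels through $\phi$ yields a seed isomorphism $\sigma\colon\bi_{\phi(\Delta)}\to\bi_\Delta$, and we set $\Theta(\phi):=[\sigma\circ\mu]\in\Gamma_F$. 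The two points needing proof are that $\Theta(\phi)$ is independent of the flip sequence modulo trivial transformations and that $\Theta$ is a homomorphism. For the first, any two flip sequences with the same endpoints differ by a succession of \emph{local} relations --- commutation (square) relations, pentagon relations, and the $\diamondsuit$-cycle relations of \cref{fig; diamond-cycle} --- and inspecting the associated cluster $\A$- and $\X$-transformations shows that each such relation is a trivial seed cluster transformation, hence dies in $\G_{|\bi_\Delta|}$. That these local relations generate all relations among flips is precisely the (tagged) modular-groupoid presentation, which rests on the simple-connectivity of the flip complex of $F$ (cf.\ \cite{Penner}); granting it, the homomorphism property follows by concatenating flip sequences, and by construction $\Theta(\phi)$ acts on $|\C_F|$ as $\phi$ (up to the usual variance in the action convention).

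Bijectivity of $\Theta$ splits into two statements. Injectivity follows from faithfulness of the $G$-action on $|\C_F|$ --- a tagged mapping class that fixes every tagged arc is trivial, the elementary half of arc-complex rigidity --- since $\Theta(\phi)=1$ forces $\phi$ to act trivially on $|\C_F|$. For surjectivity, given $\psi\in\Gamma_F$, let $\bar\psi\in\mathrm{Aut}(|\C_F|)$ denote its induced automorphism. Invoking the rigidity theorem for the tagged arc complex --- every simplicial automorphism of $\Arc$ is induced by an element of $MC^{\bowtie}(F)$, the finitely many low-complexity surfaces on which this fails being treated separately --- we obtain $\phi\in G$ acting on $|\C_F|$ as $\bar\psi$; then $\Theta(\phi)$ and $\psi$ induce the same automorphism of $|\C_F|$, and since the $\Gamma_F$-action on $|\C_F|$ is itself faithful (any flip loop at $\bi_\Delta$ is, by the same modular-groupoid presentation, a product of the local relations above and hence trivial) we conclude $\Theta(\phi)=\psi$. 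A final check: the $\{\pm 1\}^p$-part of $G$ really is hit, since reversing the tag at a puncture is realized by a short flip sequence running through a self-folded triangle, i.e.\ around a $\diamondsuit$-cycle --- this is precisely why the full tagged mapping class group, not merely $MC(F)$, appears when $\Arc$ is connected.

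The principal obstacle is the rigidity input used in the surjectivity step: identifying the simplicial automorphism group of the tagged arc complex with $MC^{\bowtie}(F)$ and disposing of the sporadic small-complexity surfaces where this identification must be adjusted --- this is the technical core of \cite{BS15} Propositions 8.5 and 8.6. A secondary, more bookkeeping-type difficulty is the careful handling of tags: one must verify that the identification of $|\C_F|$ with a connected component of $\Arc$ supplied by \cref{Arc} is equivariant for both the $\Gamma_F$- and the $G$-action, including on arcs carrying notches and across the $\diamondsuit$-cycles of \cref{fig; diamond-cycle}.
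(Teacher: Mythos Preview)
Your construction of the map $\Theta\colon G\to\Gamma_F$ is essentially identical to the paper's construction of $I$: choose a flip sequence to the image triangulation and compose with the seed isomorphism induced by the tagged mapping class. The only cosmetic difference is that the paper flips from $\Delta$ to $\epsilon\cdot\phi^{-1}(\Delta)$ whereas you flip to $\phi(\Delta)$; this is the usual left/right action convention and you flag it yourself. So on the level of \emph{constructing the isomorphism}, your proposal and the paper agree.

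Where your proposal goes further is that the paper explicitly gives only a ``sketch of the construction'' and defers the proof that $I$ is bijective entirely to \cite{BS15}; you instead outline an argument. Your injectivity step (faithfulness of the $G$-action on tagged arcs) is standard and fine. Your surjectivity step, however, routes through the \emph{rigidity} of the tagged arc complex --- that $\mathrm{Aut}(\Arc)\cong MC^{\bowtie}(F)$ --- together with faithfulness of the $\Gamma_F$-action on $\C_F$. This is a legitimate strategy, and you correctly identify rigidity as the heavy input and the small-surface exceptions as needing separate treatment. It is worth noting, though, that this is not quite how \cite{BS15} proceeds: their argument works more directly at the level of the exchange graph and labelled quivers rather than invoking an Ivanov-type rigidity theorem for $\Arc$. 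Your route is conceptually cleaner but imports a stronger external result; theirs is more self-contained. Either way, the paper itself proves nothing beyond the construction, so your proposal is strictly more detailed than what appears here and is correct as an outline.

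One small redundancy: your ``final check'' that the $\{\pm 1\}^p$-factor is hit is already subsumed by surjectivity, so it adds intuition but no logical content.
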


We give a sketch of the construction of the isomorphism here, for later use. 

\begin{proof}[Sketch of the construction]
Let us first consider the generic case $F\neq F_g^1$. Fixing a tagged triangulation $\Delta$, we can think of the cluster modular group as $\Gamma_{F}=\pi_1(\mathcal{M}^{\bowtie}(F), \Delta)$.
For a mapping class $\psi=(\phi, \epsilon) \in MC^{\bowtie}(F)$, there exists a sequence of tagged flips $\mu_{i_1}, \cdots, \mu_{i_k}$ from $\Delta$ to $\epsilon\cdot\phi^{-1}(\Delta)$ by \cref{Arc}. Since both $\phi$ and $\epsilon$ preserves the exchange matrix of the tagged triangulation, there exists a seed isomorphism $\sigma: \epsilon\cdot \phi^{-1}(\Delta) \to \Delta$. Then $I(\psi):= \sigma \circ \mu_{i_k} \cdots \mu_{i_1} $ defines an element of the cluster modular group. Hence we get a map $I: MC^{\bowtie}(F) \to \Gamma_{F}$, which in turn gives an isomorphism. Since each element of $MC(F)$ preserves the tags, the case of $F=F_g^1$ is clear.
\end{proof}

\subsection{The seed associated with an ideal triangulation is of \Teich type}\label{Teich are Teich}
Let $\Delta$ be an ideal triangulation of a marked hyperbolic surface $F$ and $\bi_\Delta$ the associated seed.

\begin{thm}\label{prop: Teich are Teich}
The seed $\bi_\Delta$ is of \Teich type.
\end{thm}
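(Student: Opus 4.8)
The plan is to verify the two defining conditions of a \Teich type seed separately: the growth property $(\mathrm{T1})$ and the equivalence property $(\mathrm{T2})$. For $(\mathrm{T1})$ I would invoke \cref{growth}, which reduces the problem to showing that the cluster modular group $\Gamma_F \cong MC^{\bowtie}(F)$ (or $MC(F)$ in the exceptional case) acts properly discontinuously on the positive real part $\Z_F(\pos)$ for $\Z = \A$ or $\X$. This is where the geometric identification recalled in \cref{section: Teich} does the real work: $\X_F(\pos)$ (or $\U_F(\pos)$) is the enhanced (resp.\ decorated) \Teich space of $F$, and the mapping class group is classically known to act properly discontinuously on \Teich space. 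The decorated and enhanced versions add a finite-dimensional fiber on which the action is also proper, and the extra $\{\pm1\}^p$ factor in the tagged mapping class group is finite, so properness of the combined action follows. The geometric description of $\A(\pos)$ and $\X(\pos)$ promised in \cref{appendix: Teich} is exactly what is needed to make this precise.

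**The condition $(\mathrm{T2})$.** For the equivalence property I would argue combinatorially using the identification $\C_F \cong \Arc$ (or $\mathrm{Arc}(F)$) from \cref{Arc}, together with the identification of $P\X(\trop)$ with the space of measured laminations on $F$ sketched in the introduction and to be detailed in \cref{appendix: Teich}. A point $L \in \X(\trop)_+$ corresponds to a (nonnegative, integral or real) lamination, and for a tagged triangulation $\Delta$ the tropical $\X$-coordinates of $L$ in the chart $\xi_v$ record the (signed, but here nonnegative) intersection data of $L$ with the arcs of $\Delta$; the zero subcluster $Z(L)$ consists precisely of those arcs disjoint from the support of $L$. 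So a non-negative cluster for $L$ is a tagged triangulation whose every arc is ``on the correct side'' of $L$, and two such are $Z(L)$-equivalent exactly when they are connected by flips supported in the complementary subsurface $F \setminus L$. The claim then amounts to: any two tagged triangulations adapted to a fixed lamination $L$ differ by flips not crossing $L$. This can be proved by cutting $F$ along $L$ and applying the connectivity theorem for tagged triangulations (\cref{Whitehead} together with \cref{Arc}) to each component of the cut surface, then reassembling; the flips in the pieces lift to flips in $Z(L)$.

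**Main obstacle.** I expect the hard part to be the careful bookkeeping in the $(\mathrm{T2})$ argument, particularly around self-folded triangles, notched tags, and the $\diamondsuit$-cycles of \cref{fig; diamond-cycle}: the passage from a tagged triangulation $\Delta$ to its associated ideal triangulation $\Delta^\circ$ can create loops around punctures, and one must check that the tropical $\X$-coordinates of a lamination behave compatibly with this operation so that ``disjoint from $L$'' is intrinsic and the cut-and-reassemble argument really produces tagged (not just ideal) triangulations. A secondary subtlety is the degenerate cases---$F = F_{g,0}^1$ where the tagged arc complex is disconnected, and small surfaces---which should be checked directly or excluded by the standing hypothesis $6g-6+3b+3p+D>0$. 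The properness argument for $(\mathrm{T1})$, by contrast, I expect to be essentially a citation once the geometric identifications of \cref{appendix: Teich} are in place, modulo checking that the decoration/enhancement fibers do not destroy properness.
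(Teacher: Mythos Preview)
Your strategy is correct and matches the paper's. For $(\mathrm{T1})$ you do exactly what the paper does: reduce via \cref{growth} to proper discontinuity of $\Gamma_F$ on the positive real parts, invoke the classical proper discontinuity of $MC(F)$ on \Teich space via the identification in \cref{action coincide}, and absorb the finite factor $\{\pm1\}^p$ by noting that each $(\phi,\epsilon)$ acts as $\phi\circ\iota(\epsilon)$ with $\iota(\epsilon)$ drawn from a finite set of involutions.

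For $(\mathrm{T2})$ your plan is somewhat more explicit than the paper's. The paper observes that for a tagged triangulation $\Delta=\{\gamma_1,\dots,\gamma_N\}$ without digons the map $\Psi_\Delta$ sends $[w_1,\dots,w_N]$ to the weighted multiarc $(\bigsqcup w_j\gamma_j,\pm)$, so a non-negative cluster for $L$ is precisely a tagged triangulation containing the support of $L$ with compatible tags, and $Z(L)$ is the complement of that support; from this the equivalence condition is asserted. Your cut-and-reassemble argument via connectivity of the tagged arc complex of the cut surface is the natural justification. For the $\diamondsuit$-cycles the paper does not do general bookkeeping but simply computes the tropical $\X$-transformations around the cycle and reads off the sign pattern directly---so your instinct that this is the delicate case is right, and a direct four-line computation there is both feasible and what the paper actually does.

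One imprecision to fix: the tropical $\X$-coordinates of $L$ in a chart $\Delta$ are \emph{not} intersection numbers with the arcs of $\Delta$ (intersection numbers are closer to tropical $\A$-coordinates, and arcs of a triangulation are pairwise disjoint anyway). They are the \emph{weights} $w_j$ in the expression $L=(\bigsqcup w_j\gamma_j,\pm)$ when the $\gamma_j$ range over the arcs of $\Delta$. With that correction your description of $Z(L)$ and of non-negative clusters is accurate, and the argument goes through.
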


\begin{proof}

Condition (T1).
We claim that the action of the cluster modular group on each positive space is properly discontinuous. Then the assertion follows from \cref{growth}. First consider  the action on the $\X$-space. By \cref{action coincide}, the action of the subgroup $MC(F) \subset \Gamma_{F}$ on the $\X$-space $\X(\pos)$ coincide with the geometric action. Hence this action of $MC(F)$ is properly discontinuous, as is well-known. See, for instance, ~\cite{FM}. From the definition of the action of $\Gamma_{F}=MC^{\bowtie}(F)$ on the tagged arc complex and \cref{action}, one can verify that an element $(\phi,\epsilon) \in MC^{\bowtie}(F)$ acts on the positive $\X$-space as $(\phi,\epsilon)g=\phi(\iota(\epsilon)g)$, where $\iota(\epsilon):=\prod_{\epsilon(a)=-1} \iota_a$ is a composition of the involutions defined in \cref{def; involution}. Now suppose that there exists a compact set $K \subset \X(\pos)$ and an infinite sequence $\psi_m=(\phi_m,\epsilon_m) \in \Gamma_{F}$ such that $\psi_m(K) \cap K \neq \emptyset$. Since $\{\pm1\}^p$ is a finite group, $(\phi_m) \subset MC(F)$ is an infinite sequence and there exists an element $\epsilon \in \{\pm1\}^p$ such that $\epsilon_m=\epsilon$ for infinitely many $m$. Hence we have \[
\emptyset \neq \psi_m(K) \cap K =\phi_m(\iota(\epsilon)K) \cap K \subset \phi_m(\iota(\epsilon)K \cup K) \cap (\iota(\epsilon)K \cup K)
\]
for infinitely many $m$, which is a contradiction to the proper discontinuity of the action of $MC(F)$. Hence the action of $\Gamma_{F}$ on the $\X$-space is properly discontinuous. The action on the $\A$-space is similarly shown to be properly discontinuous. Here the action of $\epsilon$ is described as $\iota'(\epsilon):=\prod_{\epsilon(a)=-1} \iota'_a$, where $\iota'_a$ is the involution changing the horocycle assigned to the puncture $a$ to the conjugated one (see ~\cite{FoT}).

Condition (T2).
Note that for a tagged triangulation $\Delta=\{\gamma_1,\dots,\gamma_N\}$ without digons as in the left of \cref{Delta1} in \cref{appendix: Teich}, the map $\Psi_\Delta|[S_\Delta]$ is given by $\Psi([w_1,\dots,w_N])=(\bigsqcup w_j \gamma_j, \pm)$, where the sign at a puncture $p$ is defined to be $+1$ if the tags of arcs at $p$ are plain, and $-1$ if the tags are notched. Then on the image of these maps, the equivalence condition holds. Let us consider the tagged triangulation $\Delta_j$ in the $\diamondsuit$-cycle, see \cref{fig; diamond-cycle}. From the definition of the tropical $\X$-transformations, we have
\begin{align*}
\begin{cases}
x_{\Delta_2}(\alpha)=-x_{\Delta_1}(\alpha) \\
x_{\Delta_2}(\beta)=x_{\Delta_1}(\beta) 
\end{cases}
&
\begin{cases}
x_{\Delta_3}(\alpha)=-x_{\Delta_1}(\alpha) \\
x_{\Delta_3}(\beta)=-x_{\Delta_1}(\beta) 
\end{cases}
&
\begin{cases}
x_{\Delta_4}(\alpha)=x_{\Delta_1}(\alpha) \\
x_{\Delta_4}(\beta)=-x_{\Delta_1}(\beta).
\end{cases}
\end{align*}
Hence the equivalence condition on the image of the $\diamondsuit$-cycle holds.
\end{proof}

\subsection{Comparison with the Nielsen-Thurston classification of elements of the mapping class group}\label{comparison}
Let $F$ be a hyperbolic surface of type $F_g^1$ or $F_{g,\vec{\delta}}$ throughout this subsection.
Recall that in this case we have $\Gamma_{F} \cong MC(F)$ and $\C_{F} \cong \mathrm{Arc}(F)$, see \cref{MCG,Arc}. 
Let us recall the Nielsen-Thurston classification.
\begin{dfn}[Nielsen-Thurston classification]\label{classical NT}
A mapping class $\phi \in MC(F)$ is said to be
\begin{enumerate}
\item \emph{reducible} if it fixes an isotopy class of a finite union of mutually disjoint simple closed curves on $F$, and
\item \emph{pseudo-Anosov (pA)} if there is a pair of mutually transverse filling laminations $G_\pm \in \ML$ and a scalar factor $\lambda >0$ such that $\phi(G_\pm)=\lambda^{\pm 1}G_\pm$. The pair of projective laminations $[G_\pm]$ is called the \emph{pA-pair} of $\phi$.
\end{enumerate}
\end{dfn}
Here a lamination $G \in \wML$ is said to be \emph{filling} if each component of $F \backslash G$ is unpunctured or once-punctured polygon. 
It is known (see, for instance, ~\cite{FLP}) that each mapping class is at least one of  periodic, reducible, or pA, and a pA class is neither periodic nor reducible. Furthermore a mapping class $\phi$ is reducible if and only if it fixes a non-filling projective lamination, and is pA if and only if it satisfies $\phi(G)=\lambda G$ for some filling lamination $G\in \ML$ and a scalar $\lambda>0$, $\neq 1$. A pA class $\phi$ has the following asymptotic behavior of orbits in $\PML$: for any projective lamination $[G] \in \PML$ we have $\lim_{n \to\infty}\phi^{\pm n}[G]=[G_\pm]$.

We shall start with periodic classes. In this case the characterization of periodic classes described in \cref{periodic} is complete:

\begin{prop}\label{periodic mapping class}
For a mapping class $\phi \in \Gamma_{F}$, the following conditions are equivalent.
\begin{enumerate}
{\renewcommand{\labelenumi}{(\roman{enumi})}
\item The mapping class $\phi$ fixes a cell $C \in \C$ of finite type.
\item The mapping class $\phi$ is periodic.
\item The mapping class $\phi$ has fixed points in $\A_{F}(\pos)$ and $\X_{F}(\pos)$}.
\end{enumerate}
\end{prop}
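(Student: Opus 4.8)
Since $\bi_\Delta$ is of \Teich type by \cref{prop: Teich are Teich}, it satisfies the growth property $(\mathrm{T}1)$, so the implications $(\mathrm{i}) \Rightarrow (\mathrm{ii}) \Rightarrow (\mathrm{iii})$ of \cref{periodic} together with the equivalence of $(\mathrm{ii})$ and $(\mathrm{iii})$ from \cref{condition(T1)} are already available. The plan is therefore to prove the one remaining implication $(\mathrm{iii}) \Rightarrow (\mathrm{i})$, which closes the cycle. So assume $\phi$ has a fixed point $g \in \A_F(\pos)$, and recall (see \cref{appendix: Teich}) that $\A_F(\pos)$ is Penner's decorated \Teich space; thus $g$ is a decorated hyperbolic structure on $F$ preserved by a diffeomorphism representing $\phi$.

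The first step is to invoke Penner's convex hull construction, i.e.\ the Epstein--Penner decomposition \cite{Penner87}: one lifts the centres of the decorating horocycles to the positive light cone in Minkowski space, takes the boundary of their convex hull, and projects its cells back to $F$, obtaining a canonical decomposition of $F$ into finitely many ideal polygons whose vertices are the marked points. The essential feature is that this decomposition is manufactured solely out of the decorated structure, hence is equivariant under the (tagged) mapping class group; composing with the identification $\Gamma_F \cong MC(F)$ (resp.\ $MC^{\bowtie}(F)$) of \cref{MCG}, the fixed structure $g$ yields a $\phi$-invariant finite set $\{\alpha_1,\dots,\alpha_k\}$ of ideal arcs, namely the edges of the decomposition. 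For the bordered case $F = F_{g,\vec{\delta}}$ one uses instead the decorated \Teich theory of bordered surfaces, for which the same construction is available.

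The second step is to conclude. Under the identification of $\C_F$ with (a connected component of) the tagged arc complex from \cref{Arc}, the set $\{\alpha_1,\dots,\alpha_k\}$ spans a cell $C$ of $\C_F$, and $\phi$-invariance of this arc system gives $\phi(C)=C$. It remains to check that $C$ is of finite type: by construction the complement $F \setminus (\alpha_1 \cup \dots \cup \alpha_k)$ is a disjoint union of open polygons (the interiors of the Epstein--Penner cells), and a polygon contains only finitely many ideal arcs, hence only finitely many simplices of arcs in its interior; therefore only finitely many simplices of $\C_F$ contain $C$, i.e.\ $C$ is of finite type. This establishes $(\mathrm{iii}) \Rightarrow (\mathrm{i})$ and completes the cycle $(\mathrm{i}) \Rightarrow (\mathrm{ii}) \Rightarrow (\mathrm{iii}) \Rightarrow (\mathrm{i})$.

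I expect the main obstacle to be bookkeeping rather than genuine mathematics: one must match Penner's mapping class group action with the cluster modular group action on $\A_F(\pos)$ and on $\C_F$ through \cref{MCG,Arc}, and one must confirm that the Epstein--Penner cells really are honest disks -- in particular that no self-folded triangles or once-punctured polygons arise among them, which holds since each cell is the projection of a genuine convex polygon -- so that the edge set is a bona fide simplex of the arc complex and not merely a point of its geometric realization. A secondary point is that the bordered case requires citing the appropriate bordered-surface version of decorated \Teich theory in place of the original one.
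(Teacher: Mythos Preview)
Your proposal is correct and follows essentially the same route as the paper: both invoke Penner's convex hull (Epstein--Penner) construction to pass from a fixed decorated hyperbolic structure to a fixed ideal cell decomposition, which is then identified with a cell of finite type in $\C_F$. The paper packages this more tersely via the equivariant isomorphism $\C^* \cong \wTF/\mathbb{R}_{>0}$ (where $\C^*$ is the subcomplex of finite-type cells) and a separate lemma characterizing finite-type cells as ideal cell decompositions, whereas you unpack the same mechanism explicitly and prove the finite-type property inline; the bookkeeping concerns you flag (matching the two actions, honesty of the cells, the bordered variant) are handled in the paper by \cref{action coincide}, the definition of ideal cell decomposition, and Penner's bordered theory respectively.
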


\begin{lem}\label{lem: reduced cluster complex}
The cells of finite type (see \cref{periodic}) in the cluster complex are in one-to-one correspondence with ideal cell decompositions of $F$. Here an \emph{ideal cell decomposition} is a family $\Delta=\{\alpha_i\}$ of ideal arcs such that each connected component of $F\backslash \bigcup \alpha_i$ is a polygon.
\end{lem}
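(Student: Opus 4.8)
The plan is to reinterpret ``cell of finite type'' geometrically through the cluster reduction and then invoke the classical fact that a bordered surface has a finite arc complex exactly when it is a polygon. Using \cref{Arc}, identify a cell $C$ of $\C_F$ with the simplex spanned by a finite set $\Delta_C=\{\alpha_1,\dots,\alpha_k\}$ of pairwise compatible ideal arcs of $F$; since distinct such sets span distinct cells, $C\mapsto\Delta_C$ is already a bijection from cells of $\C_F$ onto such sets, so it suffices to show that $C$ is of finite type precisely when $\Delta_C$ is an ideal cell decomposition. Recall from \cref{periodic} that the supercells of $C$ are the cells of $\C_F$ containing $C$; every such cell lies in a maximal one, and only finitely many cells lie between $C$ and a fixed maximal supercell, so $C$ is of finite type iff only finitely many triangulations of $F$ refine $\Delta_C$, equivalently---by the cluster-reduction lemma for seeds above---iff the link of $\{\alpha_1,\dots,\alpha_k\}$ in $\C_F$, which is the cluster complex $\C_{|\bi'|}$ of the seed $\bi'$ obtained by freezing $i_1=[\alpha_1],\dots,i_k=[\alpha_k]$, is a finite complex.

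To identify this link, cut $F$ along the arcs of $\Delta_C$ to obtain a possibly disconnected bordered surface $F_C$, with induced marked points, whose components $R_1,\dots,R_m$ are the closures of the components of $F\setminus\bigcup_i\alpha_i$. The ideal arcs of $F$ compatible with $\Delta_C$ and not contained in it correspond bijectively to ideal arcs of $F_C$, compatibility being preserved, so $\mathrm{lk}_{\C_F}(C)$ is the arc complex of $F_C$, i.e. the simplicial join $\mathrm{Arc}(R_1)\ast\cdots\ast\mathrm{Arc}(R_m)$ (arcs in different components are automatically compatible). A join is a finite complex iff each factor is finite, so $C$ is of finite type iff every $\mathrm{Arc}(R_i)$ is finite.

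It therefore remains to check that $\mathrm{Arc}(R_i)$ is finite iff $R_i$ is a polygon, which combined with the above yields the lemma. A polygon has only finitely many ideal arcs, hence a finite arc complex. Conversely, each puncture of $F$ met by some $\alpha_j$ lies on $\partial F_C$; in both families treated here this forces every $R_i$ to be unpunctured (for $F=F_{g,\vec{\delta}}$ there are no punctures, and for $F=F_g^1$ a cell of finite type has $\Delta_C\neq\emptyset$, since the full surface has infinite arc complex, so its unique puncture lies on $\partial F_C$). An unpunctured bordered surface with marked points that is not a disk carries an essential simple closed curve, and Dehn-twisting a fixed ideal arc crossing it repeatedly about that curve produces infinitely many pairwise non-isotopic ideal arcs, so $\mathrm{Arc}(R_i)$ is infinite; I would cite this classical finiteness criterion (Harer, Hatcher, Fomin--Shapiro--Thurston) rather than reprove it.

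I expect the main obstacle to be making the identification of the second paragraph fully precise: matching the flip/mutation combinatorics---and, in the punctured case, the treatment of tags and self-folded triangles---of $\C_{|\bi'|}$ with those of $\mathrm{Arc}(F_C)$, which is the surface incarnation of the cluster-reduction lemma. Once that identification is secured, the finite-versus-polygon dichotomy is routine.
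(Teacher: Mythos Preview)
Your argument is correct and arrives at the same geometric fact the paper uses, but you route it through more machinery than necessary. The paper's proof is direct: supercells of $C=\{\alpha_1,\dots,\alpha_k\}$ are obtained by adding ideal arcs in $F\setminus\bigcup\alpha_i$; if every complementary region is a polygon such arcs are diagonals and hence finite in number, while if some region $F_0$ is not a polygon its mapping class group contains a half-twist or Dehn twist, giving infinitely many triangulations of $F_0$ and hence infinitely many supercells of $C$. You reach the same endpoint after invoking the cluster-reduction lemma, identifying the link with $\C_{|\bi'|}\cong\mathrm{Arc}(F_C)$, decomposing the latter as a join over components, and then running essentially the same Dehn-twist argument on each factor. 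The identification you single out as the main obstacle---matching $\C_{|\bi'|}$ with $\mathrm{Arc}(F_C)$, including the tagged/self-folded bookkeeping---is exactly what the paper sidesteps by arguing directly with supercells in $F$ rather than passing to the cut surface.

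Two minor remarks. First, your justification that $\Delta_C\neq\emptyset$ for $F=F_g^1$ (``since the full surface has infinite arc complex'') is superfluous: any simplex has at least one vertex, so $\Delta_C\neq\emptyset$ automatically, and since every ideal arc on $F_g^1$ is incident to the unique puncture your conclusion that each $R_i$ is unpunctured follows immediately. Second, your observation that all $R_i$ are unpunctured in the two cases at hand is correct and slightly sharpens the paper's argument (which allows for half-twists as well as Dehn twists), but the paper's phrasing simply covers a marginally more general situation than is strictly needed.
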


\begin{proof}
Let $C=(\alpha_1, \dots, \alpha_k)$ be a cell in the cluster complex, which is represented by a family of ideal arcs. Suppose that $\{\alpha_1, \dots, \alpha_k\}$ is an ideal cell decomposition. Then supercells of $C$ are obtained by adding some ideal arcs on the surface $F\backslash \bigcup_{i=1}^k \alpha_i$ to $\{\alpha_1, \dots, \alpha_k\}$, which are finite since such an ideal arc must be a diagonal of a polygon. Conversely suppose that $\{\alpha_1, \dots, \alpha_k\}$ is not an ideal cell decomposition. Then there exists a connected component $F_0$ of $F\backslash \bigcup_{i=1}^k \alpha_i$ which has a half-twist or a Dehn twist in its mapping class group. Hence $F_0$ has infinitely many ideal triangulations, consequently $C$ has infinitely many supercells. 
\end{proof}

\begin{proof}[Proof of \cref{periodic mapping class}]
It suffices to show that the condition $(\mathrm{iii})$ implies the condition $(\mathrm{i})$. Let $\C^*$ denote the union of all cells of finite type in the cluster complex. In view of \cref{lem: reduced cluster complex}, Penner's convex hull construction (~\cite{Penner}Chapter 4) gives a mapping class group equivariant isomorphism 
\[
\C^* \cong \wTF \slash \mathbb{R}_{>0},
\]
from which the assertion follows.
\end{proof}

Next we focus on cluster-reducible classes and their relation with \emph{reducible classes}. Observe that by \cref{prop: Teich are Teich} a mapping class is cluster-reducible if and only if it fixes an isotopy class of a finite union of mutually disjoint ideal arcs on $F$.

\begin{prop}\label{cluster-red mapping class}
The following holds.
\begin{enumerate}
\item A mapping class $\phi$ is cluster-reducible if and only if it fixes an unbounded lamination with \emph{real} weights $L=(\bigsqcup w_j \gamma_j, \pm)$, where $w_j \in \mathbb{R}$. If $\phi$ is proper reducible, then it induces a mapping class on the surface obtained by cutting $F$ along the multiarc $\bigsqcup \gamma_j$.
\item A cluster-reducible class is reducible. 
%\item A mapping class has a bounded orbit on the cluster complex if and only if it is periodic or reducible. 
\item A filling lamination is cluster-filling. 
\end{enumerate}
\end{prop}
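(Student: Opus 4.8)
The plan is to prove all three parts by transporting them through the geometric dictionary of \cref{appendix: Teich} between the tropical spaces $\X(\trop)$, $\A(\trop)$ and laminations on $F$, using that $\bi_\Delta$ is of \Teich type (\cref{prop: Teich are Teich}), that $\Psi\colon\C\to P\X(\trop)_+$ is an isomorphism (\cref{isomorphism}, whose hypothesis $(\mathrm{T2})$ is verified by \cref{prop: Teich are Teich}), and that $|\C|$ is the (tagged) arc complex of $F$ (\cref{Arc}).

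For part (1), I would first invoke \cref{redX}: since $\bi_\Delta$ is of \Teich type, $\phi$ is cluster-reducible if and only if some $L\in\X(\trop)_+\setminus\{0\}$ has $\phi[L]=[L]$. Then I would quote the identification, recorded in \cref{appendix: Teich}, of $P\X(\trop)$ with the space of unbounded projective laminations on $F$, under which $P\X(\trop)_+$ becomes exactly the set of classes represented by laminations whose support is a collection of mutually disjoint ideal arcs, i.e.\ of the form $(\bigsqcup_j w_j\gamma_j,\pm)$ with $w_j\in\mathbb{R}$; via $\Psi$ this is $|\C|$, the barycentric coordinates recording the projective weights (and the signs of the $w_j$ being immaterial, since only the support simplex and its barycenter matter). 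This gives the stated equivalence. For the addendum, I would take the fixed point set $\{\alpha_1,\dots,\alpha_k\}$ of a proper reducible $\phi$ contained in a cluster, note it corresponds to mutually disjoint ideal arcs $\gamma_1,\dots,\gamma_k$ each preserved by $\phi$, and observe that an ideal triangulation of $F$ refining $\bigsqcup_j\gamma_j$ restricts to an ideal triangulation of $F\setminus\bigsqcup_j\gamma_j$ in which the two copies of each $\gamma_j$ are boundary segments; this is precisely the triangulated surface carrying the seed obtained from $\bi_\Delta$ by freezing $\{\alpha_1,\dots,\alpha_k\}$, so the cluster reduction $\phi'$ of \cref{subsub: redA} is the mapping class induced by $\phi$ on the cut surface.

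For part (2), I would note that a cluster-reducible $\phi$ fixes a point of $|\C|$, hence fixes the minimal simplex containing it, hence preserves setwise the corresponding system $A=\{\gamma_1,\dots,\gamma_k\}$ of mutually disjoint essential ideal arcs. Taking a closed regular neighborhood $R$ of the union of $A$ with the marked points it meets, $\partial R$ is a multicurve that $\phi$ preserves up to isotopy, and since each $\gamma_j$ is an essential ideal arc, $R$ is neither a disk, a once-punctured disk, nor a $\partial$-parallel annulus, so at least one component of $\partial R$ is an essential (non-contractible, non-peripheral) simple closed curve; therefore $\phi$ is reducible in the sense of \cref{classical NT}. For part (3), I would use the identification from \cref{appendix: Teich} that for a vertex $\alpha\in V(\C)$ the tropical $\A$-coordinate $a_\alpha$ evaluated at a lamination $G$ equals the geometric intersection number $i(G,\alpha)$ with the tagged arc $\alpha$: if $G$ is filling then every component of $F\setminus G$ is an unpunctured or once-punctured polygon, so no essential ideal arc can be isotoped off $G$, whence $a_\alpha(G)=i(G,\alpha)\neq 0$ for every $\alpha\in V(\C)$, which is precisely the statement that $G$ is cluster-filling.

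I expect the main obstacle to be the step in part (2) producing a $\phi$-invariant essential multicurve from the $\phi$-invariant essential arc system --- that is, checking that the frontier of a regular neighborhood of an essential arc system always contains a non-peripheral essential simple closed curve --- and, more diffusely, pinning down in parts (1) and (3) the exact statements of the lamination dictionary of \cref{appendix: Teich}, namely that the non-negative tropical $\X$-locus consists of arc-supported laminations and that the tropical $\A$-coordinates are intersection numbers. Everything else should be bookkeeping with \cref{redX}, \cref{isomorphism}, and \cref{Arc}.
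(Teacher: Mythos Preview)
Your proposal is correct and follows essentially the same route as the paper: part (1) via \cref{redX} and the identification of $P\X(\trop)_+$ with arc-supported unbounded laminations; part (2) by producing a $\phi$-invariant multicurve from the invariant arc system (the paper cuts along the arcs and pulls back the boundary of the resulting surface, which is isotopic to your regular-neighborhood frontier); part (3) by the intersection-number interpretation of $a_\alpha$ (the paper argues the contrapositive). Two small points: the $\pm$ in $(\bigsqcup w_j\gamma_j,\pm)$ denotes orientation data at punctures rather than signs of the weights, and the paper takes $w_j\in\mathbb{R}_{>0}$; also, the essentiality concern you flag in (2) is legitimate, but the paper is equally informal about it.
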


\begin{proof}\ {}

$(1)$.
The assertion follows from \cref{redX}$(2)$. Note that an element of $P\X(\trop)_+$ consists of elements of the form $L=(\bigsqcup w_j \gamma_j, \pm)$, where $w_j \in \mathbb{R}_{>0}$.

$(2)$.
Let $\phi \in MC(F)$ be a cluster-reducible class, $L=(\bigsqcup w_j \gamma_j, \pm)$ a fixed lamination, and $\bigsqcup \gamma_j$ the corresponding multiarc. One can pick representatives of $\phi$ and $\gamma$ so that $\phi(\gamma)=\gamma$ on $F$. Then by cutting $F$ along $\bigsqcup \gamma_j$, we obtain a surface $F'$ with boundary. Since $\phi$ fixes $\bigsqcup \gamma_j$, it induces a mapping class $\phi'$ on $F'$ which may permute the boundary components. Let $C'$ be the multicurve isotopic to the boundary of $F'$. Since $\phi'$ fixes $C'$, the preimage $C$ of $C'$ in $F$ is fixed by $\phi$. Therefore $\phi$ is reducible. 

$(3)$Let $G$ be a non-cluster-filling lamination. Let $\gamma$ be an ideal arc such that $a_\gamma(G)=0$. Then $G$ has no intersection with $\gamma$. Since $G$ has compact support, there is a twice-punctured disk which surrounds $\gamma$ and disjoint from $G$, which implies that $G$ is non-filling.
\end{proof}

\begin{ex}[a reducible class which is not cluster-reducible]\label{red and arc-irred}
Let $C$ be a non-separating simple closed curve in $F=F_g^p$, and $\phi \in MC(F)$ a mapping class given by the Dehn twist along $C$ on a tubular neighborhood $\mathcal{N}(C)$ of $C$ and a pA class on $F \backslash \mathcal{N}(C)$. Then $\phi$ is a reducible class which is not cluster-reducible.
\end{ex}

\begin{proof}
The reducibility is clear from the definition. Let $F' := F \backslash \mathcal{N}(C)$. If $\phi$ fixes an ideal arc contained in $F'$, then by \cref{cluster-red mapping class} we see that the restriction $\phi|_{F'} \in MC(F')$ is reducible, which is a contradiction. Moreover since $\phi$ is the Dehn twist along $C$ near the curve $C$, it cannot fix ideal arcs which traverse the curve $C$. Hence $\phi$ is cluster-irreducible.
\end{proof}

\begin{ex}[a cluster-filling lamination which is not filling]
Let $C$ be a simple closed curve in $F=F_g^p$, and $\{P_j\}$ be a pants decomposition  of $F$ which contains $C$ as a decomposing curve: $F=\bigcup_j P_j$. For a component $P_j$ which contains a puncture, let $G_j \in \mathcal{ML}_0^+(P_j)$ be a filling lamination such that $i(G_j, C)=0$. For a component $P_j$ which does not contain any punctures, choose an arbitrary lamination $G_j \in \mathcal{ML}_0^+(P_j)$. Then $G:= \bigsqcup_j G_j \sqcup C \in \ML$ is a cluster-filling lamination which is not filling. Indeed, each ideal arc $\alpha$ incident to a puncture. Let $P_j$ be the component which contains this puncture. Since $G_j \in \mathcal{ML}_0^+(P_j)$ is filling, it intersect with the arc $\alpha$: $i(\alpha,G_j)\neq 0$. In particular $i(\alpha, G)\neq 0$. Hence $G$ is cluster-filling. However, $G$ is not necessarily filling since the complement $F\setminus G$ in general contain a component $P_j$ without punctures, which is not a polygon.
\end{ex}
Just as the fact that a mapping class is reducible if and only if it fixes a non-filling projective lamination, we expect that a mapping class is cluster-reducible if and only if it fixes a non-cluster-filling projective lamination.

\appendix

\section{The seed of type $L_k$ is of \Teich type}\label{examples}
Here we show that the seed of type $L_k$ is of \Teich type. Recall that $\Gamma_{L_k} \cong \mathbb{Z}$ from \cref{example: periodic} and the generator $\phi$ is a cluster Dehn twist.

\begin{thm}\label{Lk theorem}
The seed of type $L_k$ is of \Teich type.
\end{thm}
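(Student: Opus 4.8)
The plan is to verify the two defining conditions of \Teich type directly for the seed $\bi_k$ of type $L_k$: the growth property $(\mathrm{T1})$ and the equivalence property $(\mathrm{T2})$. For $(\mathrm{T2})$, the argument is short. The cluster complex $\C_{L_k}$ is $1$-dimensional — it is the infinite line, with vertices $V(\C^\vee)$ indexed by $\mathbb{Z}$ and the generator $\phi$ acting by a shift of length $1$ (see \cref{example: cluster complex}). A tropical point $L\in\X(\trop)$ lives in a $2$-dimensional space, and each cone $K_v$ is a quadrant. One checks from the tropicalized $\X$-mutation $(\mu_0^x)$ that, starting from $L$ with both coordinates positive in some chart $v$, the chart stays non-negative only for $v$ in a bounded (in fact, length $\le 1$) interval of the line unless one coordinate of $L$ vanishes; and when a coordinate of $L$ vanishes the zero subcluster $Z(L)$ is non-empty and mutation in the direction of $Z(L)$ is exactly the move that connects the relevant non-negative clusters. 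So any two non-negative clusters for a fixed $L$ are $Z(L)$-equivalent, giving $(\mathrm{T2})$. (Equivalently, since $\C_{L_k}$ is a tree of dimension $1$, this is immediate from the tree structure of $\bS^\vee$.)

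The substantive part is $(\mathrm{T1})$. By \cref{growth}, it suffices to show that $\Gamma_{L_k}\cong\mathbb{Z}=\langle\phi\rangle$ acts properly discontinuously on $\Z(\pos)$ for $\Z=\A$ and $\Z=\X$; equivalently, that for every $g$ the orbit $(\phi^m(g))_{m\in\mathbb{Z}}$ is divergent. I would do this by an explicit analysis of the iteration. On the $\X$-side the dynamics in the initial chart is $\phi^*(X_0,X_1)=(X_1(1+X_0)^k,\ X_0^{-1})$; write $x=\log X_0$, $y=\log X_1$, so
\[
\phi\colon (x,y)\longmapsto \bigl(\,y+k\log(1+e^{x}),\ -x\,\bigr).
\]
The point is that $\phi^2$ is a monotone expanding map in a suitable sense: combining two steps gives $x\mapsto -x$ composed with a translation by $k\log(1+e^{\bullet})\ge k\log 2>0$, so the "size" $\max(|x|,|y|)$ grows without bound under iteration in both directions. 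I would make this precise by showing $\max(|x_{m}|,|y_{m}|)\ge \max(|x_{m-2}|,|y_{m-2}|)+c$ for a constant $c>0$ once the orbit has left a fixed compact set (using that $\phi$ has no fixed point by \cref{example: periodic}), hence the orbit is divergent. The $\A$-side is analogous, with dynamics $\phi^*(A_0,A_1)=(A_1,\ (1+A_1^k)/A_0)$, i.e. $a\mapsto b$, $b\mapsto -a+\log(1+e^{kb})$ in logarithmic coordinates, which is the same skew-shift pattern and diverges by the same estimate (and is in fact essentially the recurrence already analyzed in the proof of \cref{thm; cluster Dehn twists}).

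The main obstacle is the divergence estimate for $(\mathrm{T1})$: one must rule out the possibility that the orbit of $\phi$ returns infinitely often to a compact set. The difficulty is that a single application of $\phi$ is \emph{not} uniformly expanding — near the "valley" where $x\to-\infty$ the term $k\log(1+e^x)$ is small — so the naive one-step estimate fails, and one has to pass to $\phi^2$ and track how the reflection $x\mapsto -x$ interacts with the translation to extract a net gain per two steps, together with a separate argument that the orbit cannot be eventually trapped in the region where the gain degenerates (which would force a fixed point, contradicting \cref{example: periodic}). Once that estimate is in hand, \cref{growth} and \cref{condition(T1)} finish $(\mathrm{T1})$, and combining with $(\mathrm{T2})$ gives that $\bi_k$ is of \Teich type.
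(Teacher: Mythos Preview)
Your overall structure matches the paper's: verify (T1) and (T2) directly, with (T1) reduced to divergence of every $\phi$-orbit. Your (T2) sketch is essentially the paper's argument, made concrete there via forward-invariance of the cone $\{x_0>0,\ x_0+x_1>0\}$ under the tropical map $\phi(x_0,x_1)=(x_1+k\max\{0,x_0\},-x_0)$: once $x_0>0$ one has $x_1^{(m)}=-x_0^{(m-1)}<0$ for all $m\ge 1$, so at most one chart is non-negative unless $x_0=0$, in which case exactly the $\mu_0$-adjacent chart is also non-negative and $0\in Z(L)$. (Your parenthetical that (T2) is ``immediate from the tree structure of $\bS^\vee$'' is not right: (T2) concerns how the cones $K_v$ sit inside $\X(\trop)$, not merely the combinatorics of $\C$.)

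For (T1), however, your proposed mechanism does not work. Writing the $\X$-recurrence in log-coordinates as $x_m=-x_{m-2}+k\log(1+e^{x_{m-1}})$ (so that your $y_m=-x_{m-1}$), the two-step estimate $\max(|x_m|,|x_{m-1}|)\ge\max(|x_{m-2}|,|x_{m-3}|)+c$ is \emph{false} even far from the origin: for $k=2$ and large $M$, the orbit through $x_{-1}=3M$, $x_0=M$ has $x_{-2}\approx 5M$ and continues with $x_1\approx -M$, $x_2\approx -M$, so the max-norm drops from $5M$ to $M$ over two steps. The fallback ``no fixed point rules out being trapped'' is also insufficient, since absence of fixed points does not preclude recurrent orbits. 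The paper instead finds a genuinely monotone quantity: setting $z_m:=x_m-x_{m-1}$ one gets $z_m-z_{m-1}=f(x_{m-1})$ with $f(x)=k\log(1+e^x)-2x\ge 2\log(1+e^{-x})>0$ for $k\ge 2$, so $(z_m)$ is strictly increasing; a short case split then shows some $z_N\ge 0$, after which $x_n\to+\infty$ linearly. The $\A$-case is the recurrence already treated in \cref{thm; cluster Dehn twists}. So the gap is the choice of monotone quantity --- replace the two-step max-norm by the first differences $x_m-x_{m-1}$.
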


\begin{proof}

Condition (T1).
Since the cluster complex $\C_{L_k}$ is homeomorphic to the real line and the cluster modular group acts by the shift, it suffices to show that each orbit of the generator $\phi$ is divergent. In the case of the $\X$-space, let us consider the following recurrence relation:
\begin{eqnarray*}
X_0^{(m)}&=&X_1^{(m-1)}(1+X_0^{(m-1)})^k \\
X_1^{(m)}&=&(X_0^{(m-1)})^{-1}.
\end{eqnarray*}
We claim that $\log X_0^{(m)}$ and $\log X_1^{(m)}$ diverges as $m \to \infty$. Setting $x_m:=\log X_0^{(m)}$ and deleting $X_1^{(m)}$, we have a 3-term recurrence relation
\[
x_m=-x_{m-2}+k\log(1+\exp x_{m-1}).
\]
Subtracting $x_{m-1}$ from the both sides, we have
\begin{equation}\label{recurrence y}
y_m=y_{m-1}+ f(x_{m-1})
\end{equation}
where we set $y_m:=x_m -x_{m-1}$ and $f(x):=k\log(1+\exp x)-2x$. Since $f(x)$ is positive, if $y_N$ is non-negative for some $N$, then $(y_n)_{n \geq N}$ is monotone increasing and
\[
x_n =x_{N-1}+\sum_{k=N}^{n}y_k \geq x_{N-1} +(n-N+1)y_1 \to +\infty
\]
as $n \to \infty$.
Therefore, it is enough to show that $y_M$ is non-negative for some $M$. 

Suppose that $y_m <0$ for all $m \geq 1$. Note that if $x_m \leq 0$ for some $m$, then $x_{m+2} >0> x_{m+1}$, hence $y_{m+2} >0$. Therefore it suffices to consider the case $x_m >0$ for all $m \geq 1$. Then $x_m$ is a decreasing sequence of positive numbers. In this case, from \cref{recurrence y} we have 
\[
y_m=y_1 + \sum_{k=1}^{m-1} f(x_k) \geq y_1 + (m-1)\min_{0 \leq x \leq x_1}f(x) \to \infty
\]
as $m \to \infty$, which is a contradiction.
Thus $\log X_0^{(m)}$ diverges to $+\infty$ and $\log X_1^{(m)}$ diverges to $-\infty$.
Hence the condition (T1) holds for the $\X$-space. We have proved the case of the $\A$-space in \cref{thm; cluster Dehn twists}.

Condition (T2).
In the tropical $\X$-coordinate $(x_0, x_1)$ associated with the seed ${\bi}_k$, the action of $\phi$ on the tropical $\X$-space is expressed as follows:
\[
\phi(x_0, x_1)=(x_1 + k\max\{0, x_0\}, -x_0).
\] 
To prove the condition (T2), we need to know the change of signs of tropical coordinates induced by the action described above. The following lemma follows from a direct calculation.

\begin{lem}\label{linear lemma}
Consider the following recurrence relation:
\[
\begin{cases}
x_0^{(m)}&=x_1^{(m-1)}+kx_0^{(m-1)}, \\
x_1^{(m)}&=-x_0^{(m-1)}.
\end{cases}
\]
Then we have that if $x_0^{(0)} >0$ and $x_0^{(0)}+x_1^{(0)} >0$, then $x_0^{(n)} >0$ and $x_0^{(m)}+x_1^{(m)} >0$ for all $m\geq0$.

In particular, the tropical action of $\phi$ on the cone $C_+:=\{ (x_0,x_1)| \text{ $x_0 >0$,  $x_0+x_1 >0$}\}$ is expressed by the linear transformation 
\begin{equation}\label{linear action positive}
\phi\begin{pmatrix}x_0 \\  x_1\end{pmatrix}=\begin{pmatrix}k &1 \\ -1 &0\end{pmatrix}\begin{pmatrix}x_0 \\  x_1\end{pmatrix}.
\end{equation}
\end{lem}

Let $L \in \X(\trop)_+ \backslash\{0\}$ be an arbitrary point, and $\bi$ a non-negative seed for $L$. Then $L=(x_0, x_1)$ and $x_0$, $x_1 \geq 0$ in the coordinate associated with $\bi$. In particular we have $x_0+x_1 >0$. 

If $x_0 >0$, by \cref{linear lemma} we have $x_1^{(m)}=-x_0^{(m-1)} <0$ for all $m \geq 1$, which implies that no seed other than $\bi$ is non-negative for $L$. 

If $x_0=0$, we have $x_1 >0$ and $x_0^{(1)} >0$, which implies that $\mu_0({\bi})$ is again a non-negative seed for $L$, while any other seeds are not non-negative for $L$ from the argument in the previous paragraph. Hence the condition (T2) holds.
\end{proof}

Next we study the asymptotic behavior of orbits of the generator $\phi$ of $\Gamma_{L_k}$ on the tropical $\X$-space, which may be related with that of general cluster Dehn twists.
\begin{prop}\label{Lk hyperbolic}
For $k \geq 2$, the generator $\phi$ of the cluster modular group $\Gamma_{L_k}$ has  unique attracting/repelling fixed points $[L_{\pm}] \in P\X(\trop)$ such that for all $L \in \X(\trop)$ we have 
\[
\lim_{m \to \infty}\phi^{\pm m}([L])=[L_{\pm}] \text{ in } P\X(\trop).
\]
%In particular we have 
%\[
%\lim_{n \to \infty}\phi^{\pm n}(g)=[G_{\pm}] \text{ in } \overline{\X}
%\]
%for all $g \in \X(\pos)$. In the case $k=2$, we have $G_+=G_-$.
\end{prop}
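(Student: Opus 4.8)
The plan is to describe explicitly the piecewise-linear homeomorphism $\bar\phi$ induced by $\phi$ on the circle $P\X(\trop)$ and to show that it exhibits North--South dynamics. Recall from the proof of \cref{Lk theorem} that, in the tropical $\X$-coordinate $(x_0,x_1)$ attached to the seed $\bi_k$, the action of $\phi$ on $\X(\trop)\cong\mathbb{R}^2$ is given by
\[
\phi(x_0,x_1)=\bigl(x_1+k\max\{0,x_0\},\,-x_0\bigr),
\]
so that $P\X(\trop)\cong S^1$ (the mutable rank of $\bi_k$ being $2$). This map is homogeneous, hence descends to $\bar\phi\colon S^1\to S^1$, and it is piecewise-linear: it coincides with the linear map $M:=\begin{pmatrix}k&1\\-1&0\end{pmatrix}$ on the closed half-plane $\{x_0\ge 0\}$ and with the rotation $N:=\begin{pmatrix}0&1\\-1&0\end{pmatrix}$ (by $-\pi/2$) on $\{x_0\le 0\}$; the two formulas agree on $\{x_0=0\}$, and since $\det M=\det N=1$ the homeomorphism $\bar\phi$ is orientation-preserving.

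First I would locate the fixed points of $\bar\phi$. The characteristic polynomial of $M$ is $\lambda^2-k\lambda+1$, with roots $\lambda_\pm=(k\pm\sqrt{k^2-4})/2$ satisfying $\lambda_+\lambda_-=1$ and $\lambda_+\ge 1\ge\lambda_->0$, the inequalities being strict precisely when $k\ge 3$. Solving $Mv=\lambda v$ gives $v_1=-v_0/\lambda$, so the eigenline of $M$ for $\lambda_+$ is $L_+:=\langle(1,-\lambda_-)\rangle$ and the one for $\lambda_-$ is $L_-:=\langle(1,-\lambda_+)\rangle$; both representative vectors have positive first coordinate, hence lie in the open region $\{x_0>0\}$ on which $\bar\phi$ is the projectivization of $M$, so $[L_+]$ and $[L_-]$ are fixed points of $\bar\phi$. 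Conversely, a fixed direction of $\bar\phi$ is either an eigenline of $M$ inside $\{x_0\ge0\}$ or an eigenline of $N$ inside $\{x_0\le 0\}$; since $N$ is a rotation and a direct check shows the two ``seam'' directions $[(0,\pm1)]$ are not fixed, $[L_+]$ and $[L_-]$ are the only fixed points. They are distinct for $k\ge 3$ and coincide, equal to $[(1,-1)]$, for $k=2$.

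Finally I would derive the asymptotics. For $k\ge 3$: since each of $[L_\pm]$ lies in the open region where $\bar\phi$ agrees with the projective action of $M$, and $\lambda_+>\lambda_->0$, the eigenline $L_+$ (for the larger eigenvalue) projects to a two-sided local attractor of $\bar\phi$ and $L_-$ to a two-sided local repeller. An orientation-preserving homeomorphism of $S^1$ whose fixed-point set consists of exactly one attracting and one repelling point has North--South dynamics --- on each of the two complementary arcs the displacement has constant sign, so orbits move monotonically toward the attracting end --- whence $\bar\phi^{m}([L])\to[L_+]$ for every $[L]\neq[L_-]$ and $\bar\phi^{-m}([L])\to[L_-]$ for every $[L]\neq[L_+]$. (Equivalently, by \cref{linear lemma} the cone $C_+=\{x_0>0,\ x_0+x_1>0\}$ satisfies $\bar\phi(\overline{C_+})\subseteq C_+$, so $\bigcap_{m\ge0}\bar\phi^{m}(\overline{C_+})=\{[L_+]\}$, and a short case analysis --- $N$ carries the half-circle $\{x_0<0\}$ into $\{x_0>0\}$, where iteration of $M$ drives every direction other than $[L_-]$ into $C_+$ in finitely many steps --- shows every direction $\neq[L_-]$ eventually enters $C_+$.) For $k=2$ the map $\bar\phi$ has the single fixed point $[(1,-1)]=:[L_+]=[L_-]$; its restriction to $P\X(\trop)\setminus\{[(1,-1)]\}\cong\mathbb{R}$ is a fixed-point-free orientation-preserving homeomorphism, so every forward and backward orbit converges to $[(1,-1)]$. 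I expect the one genuinely case-based step --- and thus the main obstacle --- to be the verification in the $k\ge 3$ case that $[L_-]$ is indeed the \emph{unique} direction that never enters the attracting cone $C_+$ (equivalently, the confirmation that $\bar\phi$ has no fixed or periodic points besides $[L_\pm]$); everything else reduces to linear algebra on $M$ together with elementary one-dimensional dynamics.
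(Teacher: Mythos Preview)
Your argument is correct and runs parallel to, but is packaged differently from, the paper's. The paper proceeds by a direct cone-absorption argument: it introduces the backward-invariant cone $C_-=\{x_1<0,\ x_0+x_1<0\}$ on which $\phi^{-1}$ is the linear map $\begin{pmatrix}0&-1\\1&k\end{pmatrix}$, then uses the two explicit values $\phi(-1,0)=(0,1)$ and $\phi(0,1)=(1,0)$ to trace how the remaining arc of $S^1$ is carried into $C_+$, concluding that every direction eventually lands in $C_+$ (resp.\ $C_-$) and hence is governed by the linear map $M$, whose dominant eigenline gives $[L_+]$ (resp.\ $[L_-]$). Your primary route instead classifies the fixed points of $\bar\phi$ globally, reads off their local attracting/repelling character from the eigenvalues of $M$, and then appeals to the standard North--South dynamics of an orientation-preserving circle homeomorphism with exactly two fixed points; this yields a cleaner global picture and, notably, a correct separate treatment of the parabolic case $k=2$ (where $[L_+]=[L_-]=[(1,-1)]$), which the paper's uniform formula $(k\pm\sqrt{k^2-4},-2)$ leaves implicit. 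Your parenthetical alternative via the absorbing cone $C_+$ is essentially the paper's argument, and your closing remark correctly isolates the one nontrivial step in that approach. One small point: your conclusion ``$\bar\phi^m([L])\to[L_+]$ for every $[L]\neq[L_-]$'' is in fact sharper than the paper's stated ``for all $L$'', which, read literally, overlooks the fixed direction $[L_-]$ when $k\ge 3$.
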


\begin{proof}
Note that $\phi^{-1}(x_0,x_1)=(-x_1, x_0+k\min\{0,x_1\})$.
By a similar argument as \cref{linear lemma}, we have that the cone $C_-:=\{(x_0,x_1)|\text{ $x_1<0$, $x_0+x_1<0$}\}$ is stable under $\phi^{-1}$ and on this cone 
\begin{equation}\label{linear action negative}
\phi^{-1}\begin{pmatrix}x_0 \\  x_1\end{pmatrix}=\begin{pmatrix}0 &-1 \\ 1 &k\end{pmatrix}\begin{pmatrix}x_0 \\  x_1\end{pmatrix}.
\end{equation}
Together with the fact that $\phi(-1,0)=(0,1)$ and $\phi(0,1)=(1,0)$, we see that for all $L \in \X(\trop)$, $\phi^{\pm N}(L) \in C_\pm$ for a sufficiently large number $N$. Then each orbit $(\phi^{n}(L))_{n \geq 0}$ projectively converges to the unique attracting fixed point $[L_+]$ of the linear action (\ref{linear action positive}), which is represented by $(k+\sqrt{k^2-4}, -2)$. Similarly $(\phi^{n}(L))_{n \leq 0}$ projectively converges to the unique repelling fixed point $[L_-]$ of the linear action (\ref{linear action negative}), which is represented by $(k-\sqrt{k^2-4}), -2)$. 
\end{proof}

\section{The positive real parts and the tropical spaces associated with the seed $\bi_\Delta$}\label{appendix: Teich}
Here we give a geometric description of the positive real parts and the tropical spaces associated with the seed $\bi_\Delta$ coming from an ideal triangulation $\Delta$ of a marked hyperbolic surface $F=\surf$.
Most of the contents of this section seems to be well-known to specialists, but they are scattered in literature. Therefore we tried to gather the results and give a coherent presentation of the data associated with $\bi_\Delta$.

\subsection{Positive spaces and the \Teich spaces}
Here we describe the positive real parts of $\A_{F}$ and $\X_F$ geometrically. Main references are ~\cite{FG07, FoT, Penner}.
For simplicity, we only deal with the case of empty boundary, $b=0$. The case of non-empty boundary is reduced to the case of empty boundary by duplicating the surface and considering the invariant subspace of the \Teich/lamination spaces under the natural involution. See, for details, ~\cite{Penner} section 2.

Let $F=F_g^p$ be a hyperbolic punctured surface. A non-trivial element $\gamma \in \pi_1(F)$ is said to be \emph{peripheral} if it goes around a puncture, and \emph{essential} otherwise. Let $\TF$ denote the Teichm\"uller space of all complete finite-area hyperbolic structures on $F$. Namely,
\[
T(F):=\rm{Hom}'(\pi_{1}(F), PSL_{2}(\mathbb{R}))/PSL_{2}(\mathbb{R}), 
\]
where $\rm{Hom}'(\pi_{1}(F), \PSL)$ consists of faithful representations $\rho:\pi_{1}(F)\to\PSL$ such that
\begin{enumerate}
\item the image of $\rho$ is a discrete subgroup of $PSL_2(\mathbb{R})$, and
\item it maps each peripheral loop to a parabolic element, essential one to hyperbolic ones.
\end{enumerate}
Note that each element $\rho \in T(F)$ determines a hyperbolic structure by $F \cong \mathbb{H}/\rho(\pi_1(F))$, where $\mathbb{H}:=\{z \in \mathbb{C} \mid \Im z>0\}$ is the upper half-plane.

\begin{dfn}[decorated \Teich space]
The trivial bundle $\wTF:= \TF \times \mathbb{R}_{>0}^{s}$ is called the \emph{decorated \Teich space}. Let $\varpi: \wTF \to \TF$ be the natural projection.
\end{dfn}
Here the fiber parameter determines a tuple of \emph{horocycles} centered at each punctures. Specifically, let $\mathbb{D}:=\{ w \in \mathbb{C} \mid |w|<1 \}$ be the Poincar\'e disc model of the hyperbolic plane. A \emph{horocycle} is a euclidean circle in $\mathbb{D}$ tangent to the boundary $\partial\mathbb{D}$. The tangent point is called the \emph{center} of the horocycle.
For a point $\tilde{g}=(g, (u_a)_{a=1}^p) \in \wTF$ and a puncture $a$, let $\tilde{a} \in \partial\mathbb{D}$ be a lift of $a$ with respect to the hyperbolic structure $g$ and $\tilde{h}_a(u_a)$ the horocycle centred at $\tilde{a}$ whose euclidean radius is given by $1/(1+u_a)$. Then $h_a(u_a):= \varpi(\tilde{h}_a(u_a))$ is a closed curve in $F$, which is independent of the choice of a lift $\tilde{a}$. We call it a horocycle in $F$.
%\begin{rem}
%The pull-back of the horocycle $\tilde{h}_a(u_a)$ centred at $1 \in \partial\mathbb{D}$ to the upper half-plane $\mathbb{H}:=\{z \in \mathbb{C} \mid \Im z>0\}$ by the Cayler tranformation $w=(z-i)/(z+i)$ is the horizontal line of height $u_a$. 
%\end{rem}

Given a point $\tilde{\rho}=(\rho, (u_a)_{a=1}^p)$ of $\wTF$, we can associate a positive real number with each ideal arc $e$ as follows. Straighten $e$ to a geodesic in $F$ for the hyperbolic structure given by $\rho$. Take a lift $\tilde{e}$ to the universal cover $\mathbb{D}$. Then there is a pair of horocycles given by the fiber parameters $u_a$, centred at each of the endpoints of $\tilde{e}$. Let $\delta$ denote the signed hyperbolic distance of the segment of $\tilde{e}$ between these two horocycles, taken with a positive sign if and only if the horocycles are disjoint. Finally, define the $\A$-coordinate (which is called \emph{$\lambda$-length coordinate} in \cite{Penner}) of $e$ for $\tilde{\rho}$ to be $A_{e}(\tilde{\rho}):= \sqrt{e^{\delta/2}}$.
Then $A_e$ defines a function on $\wTF$. For an ideal triangulation $\Delta$ of $F$, we call the set $A_\Delta=(A_e)_{e\in \Delta}$ of functions the \emph{Penner coordinate} associated with $\Delta$.
\begin{prop}[Penner ~\cite{Penner} Chapter 2, Theorem 2.5]\label{decorated teich}

For any ideal triangulation $\Delta$ of $F$, the Penner coordinate
\[
A_{\Delta}: \wTF \to \mathbb{R}^{\Delta}_{>0}
\]
gives a real analytic diffeomorphism. Furthermore the Penner coordinates give rise to a positive space $\psi_{\A}: \mathcal{M}(F) \to \Pos(\mathbb{R})$. More precisely, the coordinate transformation with respect to the flip along an ideal arc $e\in\Delta$ is given by the positive rational maps shown in \cref{fig: A-coord}.
\end{prop}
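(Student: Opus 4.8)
The statement is Penner's foundational $\lambda$-length theorem, so the plan is to follow his original argument (\cite{Penner}, Chapter~2) in four steps: (a)~analyze a single decorated ideal triangle and record the horocyclic lengths ($h$-lengths) of its three truncating horocyclic segments in terms of the three $\lambda$-lengths of its sides; (b)~prove that $A_\Delta$ is a bijection by exhibiting its inverse as an explicit gluing construction of decorated ideal triangles; (c)~upgrade this bijection to a real-analytic diffeomorphism; and (d)~derive the flip formula of \cref{fig: A-coord} from the Ptolemy relation, which at the same time shows that the $\lambda$-length charts patch into a positive space on $\mathcal{M}(F)$.

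For step~(a), I would normalize a decorated ideal triangle in $\mathbb{H}$ by placing its ideal vertices at $0$, $1$, $\infty$ and prescribing the three horocycles by their Euclidean data. A direct computation of the signed distances between consecutive horocycles shows that the triple of $\lambda$-lengths $(x,y,z)$ can be made to realize \emph{any} element of $\mathbb{R}_{>0}^3$ (the horocycle parameters being the free data), that the decorated triangle is determined up to isometry by $(x,y,z)$, and that the $h$-length of the horocyclic segment at the vertex opposite the side of $\lambda$-length $x$ equals $x/(yz)$. For step~(b), given a tuple $(A_e)_{e\in\Delta}\in\mathbb{R}_{>0}^{\Delta}$, I would glue the corresponding decorated ideal triangles isometrically along shared edges, matching $\lambda$-lengths and hence matching the attached horocyclic segments; developing this in $\mathbb{D}$ produces a hyperbolic surface homeomorphic to $F$ carrying a horocyclic decoration. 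The point is that the sum of $h$-lengths around each puncture is finite and positive, which is exactly what forces the holonomy around that puncture to be parabolic and the decoration to close up into an embedded horocycle, so the result is a point of $\wTF$ whose Penner coordinates are the prescribed $(A_e)$. This construction is inverse to $A_\Delta$, so $A_\Delta$ is a bijection.

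For step~(c), $A_\Delta$ is real-analytic because, for $\tilde\rho=(\rho,(u_a))$, each $A_e(\tilde\rho)$ is assembled from hyperbolic distances between geodesics and horocycles, which depend real-analytically on $(\rho,(u_a))$; and the inverse is real-analytic because the gluing construction of step~(b) writes the holonomy generators as $\PSL$-matrices with entries rational in the $A_e$ (and in finitely many square roots thereof) and writes the decoration parameters likewise. Hence $A_\Delta$ is a real-analytic diffeomorphism. For step~(d), let $Q$ be the quadrilateral formed by the two triangles of $\Delta$ adjacent to $e$, let $e'$ be its other diagonal, and let $a,b,c,d$ be the $\lambda$-lengths of the four sides of $Q$ in cyclic order; lifting $Q$ to $\mathbb{D}$, placing its four ideal vertices at convenient boundary points and attaching horocycles compatible with $a,b,c,d$, a direct computation of the two diagonal $\lambda$-lengths gives the Ptolemy relation $A_e A_{e'}=A_a A_c+A_b A_d$. (Equivalently, in the model where a decorated ideal point is a future-directed null vector $v\in\mathbb{R}^{1,2}$ with $A_{pq}^2=-\tfrac12\langle v_p,v_q\rangle$, the relation becomes a determinant identity among the four null vectors.) Since the flip changes only the arc $e$, this is precisely the coordinate transformation in \cref{fig: A-coord}, and it is manifestly a positive rational map; combined with \cref{Whitehead}, the $\lambda$-length charts then assemble into a positive space $\psi_{\A}:\mathcal{M}(F)\to\Pos(\mathbb{R})$.

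I expect the main obstacle to be the completeness and parabolicity claim inside step~(b): verifying that the glued structure genuinely lies in $\wTF$ --- that no cusp is replaced by a cone point or a flaring end and that the decoration closes up smoothly --- is the delicate point, and it is exactly what the $h$-length bookkeeping around the punctures is designed to control. The Ptolemy computation of step~(d), though it is the computational heart of the argument, is routine once the four ideal vertices are put into a convenient normal form.
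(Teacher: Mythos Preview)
The paper does not give its own proof of this proposition: it is stated as a citation of Penner's theorem (\cite{Penner}, Chapter~2, Theorem~2.5) and left without proof. Your outline is a faithful sketch of Penner's original argument --- the decorated-triangle lemma with the $h$-length formula, the gluing/developing construction with parabolicity controlled by the finite $h$-length sum around each puncture, and the Ptolemy relation for the flip --- so there is nothing to compare and no gap to flag.
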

\begin{figure}[h]
\unitlength 1mm
{\begin{picture}(100,50)(4,95)
\put(70,120){\line(1, 1){20}} \put(78,108){\makebox(0,0)[cc]{$A_d$}}
\put(70,120){\line(1,-1){20}} \put(78,132){\makebox(0,0)[cc]{$A_a$}}
\put(70,120){\line(1,0){40}} \put(90,120) {\makebox(0,0)[bc]{$ \frac{A_a A_c+A_b A_d}{A_e}$}}
\put(110,120){\line(-1,1){20}}\put(103,108){\makebox(0,0)[cc]{$ A_c$}}
\put(110,120){\line(-1,-1){20}}\put(103,132){\makebox(0,0)[cc]{$ A_b$}}
\put(90,95){\makebox(0,0)[cc]{$\Delta'$}}
\put(10,120){\line(1,1){20}} \put(18,108){\makebox(0,0)[cc]{$A_d$}}
\put(10,120){\line(1,-1){20}} \put(18,132){\makebox(0,0)[cc]{$ A_a$}}
\put(50,120){\line(-1,1){20}}\put(42,108){\makebox(0,0)[cc]{$ A_c$}}
\put(50,120){\line(-1,-1){20}}\put(42,132){\makebox(0,0)[cc]{$ A_b$}}
\put(30,140){\line(0,-1){40}}\put(32,120) {\makebox(0,0)[cc]{$ A_e$}}
\put(60,120){\makebox(0,0)[cc]{$\xrightarrow{\mu_e}$}}
\put(30,95){\makebox(0,0)[cc]{$\Delta$}}
\end{picture}}
\caption{ }
\label{fig: A-coord}
\end{figure}
In \cite{FoT}, the authors generalized the definition of the $\A$-coordinates to tagged arcs:

\begin{thm}[Fomin-Thurston \cite{FoT} Theorem 8.6]\label{A-extension}
The above functor extends to a positive space $\psi_{\A}^{\bowtie}: \mathcal{M}^{\bowtie}(F) \to \Pos(\mathbb{R})$ so that the positive real part is naturally identified with the decorated \Teich space $\wTF$, \emph{i.e.}, $\A(\mathbb{R}_{>0}) \cong \wTF$.
\end{thm}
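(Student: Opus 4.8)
The plan is to reproduce the argument of Fomin--Thurston \cite{FoT}. One extends the $\lambda$-length of an ideal arc to tagged arcs by measuring, at a notched end, the signed hyperbolic distance to a \emph{conjugate horocycle} instead of to the horocycle prescribed by the decoration, and then checks that the resulting coordinate tuples transform under tagged flips exactly by the cluster $\A$-transformations of $\bi_\Delta$. As in the rest of \cref{appendix: Teich} it suffices to treat the empty-boundary case $b=0$; the general case follows by doubling $F$ along $\partial F$ and passing to the invariant subspace under the natural involution, as in \cite{Penner}.

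First I would make the conjugate horocycle precise. For a decorated structure $\tilde\rho=(\rho,(u_a)_a)\in\wTF$ and a puncture $a$, let $\bar h_a(\tilde\rho)$ be the horocycle centred at a lift $\tilde a\in\partial\mathbb{D}$ whose total horocyclic length cut off inside the ideal triangles incident to $a$ is reciprocal to that of $h_a(\tilde\rho)$; equivalently $\bar h_a(\tilde\rho)=h_a(\iota'_a\tilde\rho)$, where $\iota'_a$ is the horocycle-conjugating involution of $\wTF$ used in \cref{Teich are Teich} (cf.\ \cite{FoT}). Given a tagged arc $\gamma$, straighten the underlying ideal arc to a geodesic, lift it to $\mathbb{D}$, and put at each endpoint the horocycle $h_a(\tilde\rho)$ if the tag there is plain and $\bar h_a(\tilde\rho)$ if it is notched; set $A_\gamma(\tilde\rho):=\sqrt{e^{\delta/2}}$ from the signed distance $\delta$ between these two horocycles, exactly as for ideal arcs. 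A once-punctured monogon is handled through the type-(c) description, via the identity relating the notched $\lambda$-length of the enclosed arc to the $\lambda$-length of the enclosing loop. One checks that each $A_\gamma$ is a positive rational function of the Penner coordinates of any fixed ideal triangulation by a local hyperbolic-geometry computation.

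Next I would prove that, for every tagged triangulation $\Delta$, the map $A_\Delta=(A_\gamma)_{\gamma\in\Delta\cup B(F)}\colon\wTF\to\mathbb{R}_{>0}^{N}$ is a real-analytic diffeomorphism and that a tagged flip changes the $A_\gamma$'s by the cluster $\A$-transformation of $\bi_\Delta$ (recall $\epsilon_{\Delta}:=\epsilon_{\Delta^\circ}$). For all-plain $\Delta$ this is Penner's \cref{decorated teich} together with the Ptolemy relation $A_aA_c+A_bA_d=A_eA_{e'}$ of \cref{fig: A-coord}. For a flip along an arc in a $\diamondsuit$-cycle one needs the analogue of this Ptolemy relation for an ideal quadrilateral two of whose vertices carry conjugate horocycles, together with a consistency check that traversing a $\diamondsuit$-cycle (\cref{fig; diamond-cycle}) composes to the identity coordinate change at a type-(c) puncture; both reduce to local computations in $\mathbb{D}$ with conjugate horocycles. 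Finally, since tagged flips induce the positive rational cluster $\A$-transformations just verified and Penner's theorem makes the all-plain charts diffeomorphisms, connectivity of the tagged arc complex (\cref{Arc}) forces every $A_\Delta$ to be a diffeomorphism onto $\mathbb{R}_{>0}^{N}$ -- for the exceptional all-notched component when $F=F_{g,0}^1$ one instead observes that such a chart is an all-plain chart precomposed with the diffeomorphism $\iota'_a$. These charts therefore glue into a positive space $\psi_\A^{\bowtie}\colon\mathcal{M}^{\bowtie}(F)\to\Pos(\mathbb{R})$ extending $\psi_\A$, and its glued positive real part is $\wTF$.

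The main obstacle is the bookkeeping at the punctures: one must normalise the conjugate horocycle so that simultaneously (i) the Ptolemy identities for the new tagged flips hold on the nose, (ii) traversing a $\diamondsuit$-cycle returns the identity, and (iii) each tagged triangulation still yields a global chart on $\wTF$. Once the conjugate-horocycle $\lambda$-length is normalised correctly, every remaining step is a local hyperbolic-trigonometric identity of the same flavour as Penner's original Ptolemy relation, and the patching is formal.
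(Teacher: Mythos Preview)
The paper does not actually prove this statement: it is quoted as Theorem 8.6 of Fomin--Thurston \cite{FoT}, and the paper only adds a one-sentence description of the construction, namely that the $\A$-coordinate of a tagged arc is obtained from the ordinary $\lambda$-length by replacing the decorating horocycle at a notched end by its \emph{conjugate horocycle}, defined so that the product of the two horocycle lengths on $F$ equals $1$. Your proposal is a correct sketch of exactly this Fomin--Thurston argument, so there is no genuine divergence to discuss.

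One minor remark: the paper's normalisation of the conjugate horocycle is simply ``product of lengths equals $1$'', which is the standard one and is triangulation-independent. Your phrasing via ``total horocyclic length cut off inside the ideal triangles incident to $a$'' amounts to the same thing (since the triangles tile a cusp neighbourhood), but is unnecessarily tied to a choice of triangulation; it would be cleaner to state it intrinsically as the paper does.
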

The $\A$-coordinate for a tagged arc is obtained by modifying the $\A$-coordinate for the underlying ideal arc using \emph{conjugate horocycles}, see Section 7 of ~\cite{FoT}, for details. Here two horocycles $h$ and $\bar{h}$ on $F$ are called  \emph{conjugate} if the product of their length is $1$. Changing the tags at a puncture $a$ amounts to changing the horocycle centred at $a$ by the conjugate one. More precisely, let $\epsilon_a \in MC^{\bowtie}(F)$ be the element changing the tags at a puncture $a$, see \cref{MCG}. It acts on $\TF$ by changing the horocycle centred at $a$ by the conjugate one. 

\begin{dfn}[the enhanced \Teich space]
Let $T(F)'$ denote the \Teich space of all complete (not necessarily finite-area) hyperbolic structures on $F$. Namely,
\[
T(F)':=\rm{Hom}''(\pi_{1}(F), PSL_{2}(\mathbb{R}))/PSL_{2}(\mathbb{R}), 
\]
where $\rm{Hom}''(\pi_{1}(F), \PSL)$ consists of faithful representations $\rho:\pi_{1}(F)\to\PSL$ such that
\begin{enumerate}
\item the image of $\rho$ is a discrete subgroup of $PSL_2(\mathbb{R})$, and
\item it maps each peripheral loop to a parabolic \emph{or hyperbolic} element, essential one to a hyperbolic one.
\end{enumerate}
The \emph{enhanced \Teich space} $\hTF$ is defined to be a $2^p$-fold branched cover over $T(F)'$, whose fiber over a point $\rho \in T(F)'$ consists of data of an orientation on each puncture such that the corresponding peripheral loop is mapped to a hyperbolic element by $\rho$. 
\end{dfn}

Note that a point $\rho \in T(F)$ maps each peripheral loop to a parabolic element. Hence there is a natural embedding $\iota: T(F) \to \hTF$ (no orientations are needed).
For each ideal triangulation $\Delta$ of $F$, we define a coordinate on $\hTF$ as follows. Take an element $\rho \in T(F) \subset \hTF$, for simplicity. Each $e \in \Delta$ is the diagonal of a unique quadrilateral in $\Delta$. A lift of this quadrilateral is an ideal quadrilateral in $\mathbb{D}$, whose vertices are denoted by $x$, $y$, $z$ and $w$ in the clockwise order. Let $X_\Delta(e;\rho):= (x-w)(y-z)/(z-w)(x-y)$ be the cross ratio of these four points. The function $X_\Delta(e;-)$ can be extended to the enhanced \Teich space $\hTF$. We call the set $X_\Delta=(X_\Delta(e;-))$ of functions the \emph{Fock-Goncharov coordinate} associated with $\Delta$.

\begin{prop}[Fock-Goncharov ~\cite{FG07} Section 4.1]\label{prop: X-coord}

For any ideal triangulation $\Delta$ of $F$, the Fock-Goncharov coordinate
\[
X_{\Delta}: \hTF \to \mathbb{R}^{\Delta}_{>0}
\]
gives a real analytic diffeomorphism.
Furthermore the Fock-Goncharov coordinates give rise to a positive space $\psi_{\X}: \mathcal{M}(F) \to \Pos(\mathbb{R})$. More precisely, the coordinate transformation with respect to the flip along an ideal arc $e\in\Delta$ is given by the positive rational maps shown in \cref{fig: X-coord}.
\end{prop}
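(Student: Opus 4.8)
The plan is to reduce the statement to an explicit reconstruction of the hyperbolic structure from the cross-ratio data, in the same spirit as Penner's proof of \cref{decorated teich} with cross-ratios replacing $\lambda$-lengths; up to taking logarithms the coordinates $X_\Delta(e;-)$ are Thurston's shear coordinates. Fix an ideal triangulation $\Delta$ of $F$, lift it to an ideal triangulation $\widetilde{\Delta}$ of the universal cover, realized as a triangulation of $\mathbb{H}$ with all vertices on $\partial\mathbb{H}\cong\mathbb{RP}^1$, and let $T$ be its dual tree. First I would check that $X_\Delta$ is well-defined with values in $\mathbb{R}_{>0}$: for $\rho\in\hTF$ the four vertices of any lifted quadrilateral lie in convex position on $\mathbb{RP}^1$, and the cross-ratio of four cyclically ordered points of $\mathbb{RP}^1$ is a positive real number; equivariance of the developing map shows the value attached to $e$ is independent of the chosen lift, and it depends real-analytically on $\rho$ because the developing map does.

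To prove that $X_\Delta$ is a diffeomorphism I would build an inverse. Given $(x_e)_{e\in\Delta}\in\mathbb{R}_{>0}^{\Delta}$, normalize one triangle of $\widetilde{\Delta}$ to have vertices $0,1,\infty\in\partial\mathbb{H}$ and propagate along $T$: crossing an edge $e$ from a triangle to its neighbour is realized by the unique element of $\PSL$ fixing the two endpoints of $e$ and placing the opposite vertex of the new triangle so that the four vertices of the resulting quadrilateral have cross-ratio $x_e$. Since $T$ is a tree this recipe is consistent and produces a developing map $D\colon\mathbb{H}\to\mathbb{H}$; positivity of every $x_e$ forces consecutive ideal triangles to have disjoint interiors, and the standard argument for reconstructing a Fuchsian group from shear data shows that their union exhausts $\mathbb{H}$, so $D$ is a homeomorphism. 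Transporting the deck transformations through $D$ yields a discrete faithful representation $\rho\colon\pi_1(F)\to\PSL$; the product of the cross-ratios around a puncture detects whether the peripheral element is parabolic (product $=1$) or hyperbolic, and in the hyperbolic case the pattern in which the triangles spiral toward one of its two fixed points recovers the extra orientation, so we obtain a well-defined point of the $2^p$-fold branched cover $\hTF$. Checking that the two constructions are mutually inverse, with the inverse real-analytic by the explicit M\"obius formulas, finishes this part.

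It remains to identify the transition maps. When $\Delta'$ is the flip of $\Delta$ along $e$, I would compute the cross-ratios of the quadrilaterals of $\Delta'$ in terms of those of $\Delta$ using multiplicativity of the cross-ratio under subdivision of an ideal polygon: this yields $X_e\mapsto X_e^{-1}$ on the flipped edge and multiplies each of the (at most four) neighbouring coordinates by a factor of the form $(1+X_e^{\pm1})^{\pm1}$ --- the formulas recorded in \cref{fig: X-coord} --- which are manifestly subtraction-free, hence positive rational. Assembling the charts $X_\Delta$ over all ideal triangulations of $F$ with these transition maps is precisely the data of the positive space $\psi_{\X}\colon\mathcal{M}(F)\to\Pos(\mathbb{R})$, the coordinate groupoid being connected by \cref{Whitehead}.

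The main obstacle is the middle step: showing that ideal triangles assembled from arbitrary positive cross-ratio data actually tile all of $\mathbb{H}$, and that the resulting holonomy is discrete, faithful, and of the prescribed type at every puncture --- including completeness and the branched-cover bookkeeping at punctures whose monodromy is hyperbolic. The remaining ingredients (well-definedness, positivity, real-analyticity, and the flip identity) reduce to routine computations.
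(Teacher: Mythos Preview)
The paper does not supply its own proof of this proposition: it is stated with attribution to Fock--Goncharov \cite{FG07}, Section~4.1, and no \texttt{proof} environment follows. So there is no in-paper argument to compare against.

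That said, your outline is precisely the standard argument from the cited source (and from Thurston's shear-coordinate picture that underlies it): realize the cross-ratios as gluing parameters for ideal triangles along the dual tree, reconstruct the developing map and holonomy, and read off the peripheral type from the product of cross-ratios around each puncture. The flip computation you sketch is exactly the one that produces \cref{fig: X-coord}. The obstacle you flag --- that the glued ideal triangles exhaust $\mathbb{H}$ and that the holonomy is discrete and faithful with the correct peripheral behaviour --- is genuine and is where the work lies; in \cite{FG07} this is handled by the explicit M\"obius reconstruction together with an argument on the completeness of the developed metric (equivalently, one checks that the developing map is a local isometry onto a complete hyperbolic surface, hence a covering map, hence a homeomorphism). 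Your proposal is correct in shape and matches the literature; filling in the tiling/discreteness step carefully is the only substantive task remaining.
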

\begin{figure}[h]

\unitlength 1mm
{\begin{picture}(100,50)(4,95)
\put(70,120){\line(1, 1){20}} \put(69,108){\makebox(0,0)[cc]{$X_d(1+X_e^{-1})^{-1}$}}
\put(70,120){\line(1,-1){20}} \put(69,132){\makebox(0,0)[cc]{$X_a(1+X_e)$}}
\put(70,120){\line(1,0){40}} \put(90,120) {\makebox(0,0)[bc]{$ X_e^{-1}$}}
\put(110,120){\line(-1,1){20}}\put(110,108){\makebox(0,0)[cc]{$ X_c(1+X_e)$}}
\put(110,120){\line(-1,-1){20}}\put(112,132){\makebox(0,0)[cc]{$ X_b(1+X_e^{-1})^{-1}$}}
\put(90,95){\makebox(0,0)[cc]{$\Delta'$}}
\put(10,120){\line(1,1){20}} \put(18,108){\makebox(0,0)[cc]{$X_d$}}
\put(10,120){\line(1,-1){20}} \put(18,132){\makebox(0,0)[cc]{$ X_a$}}
\put(50,120){\line(-1,1){20}}\put(42,108){\makebox(0,0)[cc]{$ X_c$}}
\put(50,120){\line(-1,-1){20}}\put(42,132){\makebox(0,0)[cc]{$ X_b$}}
\put(30,140){\line(0,-1){40}}\put(32,120) {\makebox(0,0)[cc]{$ X_e$}}
\put(60,120){\makebox(0,0)[cc]{$\xrightarrow{\mu_e}$}}
\put(30,95){\makebox(0,0)[cc]{$\Delta$}}
\end{picture}}
\caption{ }
\label{fig: X-coord}
\end{figure}

\begin{prop}\label{X-extension}
The above functor extends to a positive space $\psi_{\X}^{\bowtie}: \mathcal{M}^{\bowtie}(F) \to \Pos(\mathbb{R})$ so that the positive real part is naturally identified with the enhanced \Teich space $\hTF$, \emph{i.e.}, $\X(\pos) \cong \hTF$.
\end{prop}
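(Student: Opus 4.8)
The plan is to imitate, on the $\X$-side, the construction carried out in \cref{A-extension} for $\A$-coordinates. Concretely, I would attach to every tagged triangulation $\Delta$ of $F$ a coordinate chart $X_\Delta^{\bowtie}\colon\hTF\to\mathbb{R}^\Delta_{>0}$ that extends the Fock--Goncharov coordinate $X_\Delta$ of \cref{prop: X-coord} when $\Delta$ is plain, and then verify two things: (a) each $X_\Delta^{\bowtie}$ is a real-analytic diffeomorphism, and (b) for every tagged flip $\Delta\leftrightarrow\Delta'$ the transition $X_{\Delta'}^{\bowtie}\circ(X_\Delta^{\bowtie})^{-1}$ is exactly the positive rational map given by the cluster $\X$-mutation (\cref{cluster transf}) of the seed $\bi_\Delta$. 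Once (a) and (b) hold, the charts glue into a positive space $\psi_\X^{\bowtie}\colon\mathcal{M}^{\bowtie}(F)\to\Pos(\mathbb{R})$ whose positive real part is, tautologically, $\hTF$; and since $\mathcal{M}^{\bowtie}(F)$ is the coordinate groupoid of $\bi_\Delta$ and the transition maps are the cluster $\X$-transformations, this positive space is the $\X$-space of the cluster ensemble of $\bi_\Delta$ in the sense of \cref{def: ensembles}, giving the asserted identification $\X(\pos)\cong\hTF$.

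To construct $X_\Delta^{\bowtie}$ I would use the action on $\hTF$ of the tag-changing elements $\epsilon_a\in MC^{\bowtie}(F)$ ($a$ a puncture; see \cref{MCG}): each $\epsilon_a$ acts as a real-analytic involution $\iota_a$ of $\hTF$, flipping the orientation datum at $a$ wherever the peripheral loop around $a$ is hyperbolic and fixing the locus where it is parabolic. First I would compute that $\iota_a$ acts on the Fock--Goncharov coordinates $X_{\Delta^\circ}$ of a plain triangulation by an explicit positive, monomial-type transformation supported on the arcs incident to $a$ (it toggles which boundary fixed point of the holonomy at $a$ those arcs run to). A tagged triangulation $\Delta$ is determined by its associated ideal triangulation $\Delta^\circ$ together with the record of which punctures carry notched tags, so I would set $X_\Delta^{\bowtie}$ to be $X_{\Delta^\circ}$ composed with the involutions $\iota_a$ over the punctures $a$ of type (b), and then correct the coordinates at each puncture of type (c) --- where $\Delta^\circ$ carries a self-folded triangle with loop $\ell$ and radius $r$ --- by the appropriate limiting rule that replaces $(X_\ell,X_r)$ with the coordinate of the arc notched at that puncture, the substitution being precisely the degeneration visible in the $\diamondsuit$-cycle of \cref{fig; diamond-cycle}. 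Each such map is then a real-analytic diffeomorphism onto $\mathbb{R}^\Delta_{>0}$ by \cref{prop: X-coord} and the fact that the $\iota_a$ are automorphisms of $\hTF$ preserving the positive part of every Fock--Goncharov chart.

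The bulk of the work is step (b). A tagged flip that involves no self-folded triangle is, in these coordinates, the ordinary flip formula of \cref{fig: X-coord} (the conjugating involutions $\iota_a$ cancelling), which matches the corresponding cluster $\X$-mutation by \cref{prop: X-coord}. The genuinely delicate flips are those occurring inside a $\diamondsuit$-cycle (\cref{fig; diamond-cycle}): there the four coordinate systems $X_{\Delta_1},\dots,X_{\Delta_4}$ are obtained from the ambient ideal chart by an involution at the relevant puncture composed with an ordinary flip, and I would compute these compositions explicitly --- the sign bookkeeping in the proof of \cref{prop: Teich are Teich} being the model --- and check that they realize the exchange relations of $\epsilon_{\Delta^\circ}$. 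Since ordinary flips generate $\mathcal{M}(F)$ (\cref{Whitehead}) and, by the description of $\Arc$ in \cref{Arc}, every tagged flip falls into one of these two families, treating these generators suffices; the case $F=F_g^1$ reduces directly to \cref{prop: X-coord} after the identification $\C_F\cong\mathrm{Arc}(F)$. I expect the main obstacle to be exactly the computation at the self-folded / type-(c) configurations, where the cross-ratio coordinate genuinely degenerates and must be replaced by the correct limiting expression in order to remain compatible with the cluster $\X$-mutation rule.
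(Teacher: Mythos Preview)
Your plan is essentially the paper's own approach: define tagged coordinates by precomposing the plain Fock--Goncharov chart $X_{\Delta^\circ}$ with the tag-changing involutions $\iota_a$ on $\hTF$, handle type-(c) punctures by a special rule, and then verify that every edge of the $\diamondsuit$-cycle realizes the corresponding cluster $\X$-mutation. Two points of comparison are worth noting.

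First, the paper does \emph{not} compute $\iota_a$ in Fock--Goncharov coordinates, and you should not either: your claim that $\iota_a$ acts by a ``positive, monomial-type transformation supported on the arcs incident to $a$'' is neither needed nor obviously true. The diffeomorphism property of $X_\Delta^{\bowtie}$ follows, as you already observe later in the same paragraph, simply because each $\iota_a$ is a real-analytic automorphism of $\hTF$ and $X_{\Delta^\circ}$ is a diffeomorphism. For the type-(c) case the paper does not pass to a limit; it \emph{defines} the coordinate of the notched arc $\alpha$ directly as $X_\Delta(g,\alpha):=X_{\Delta^\circ}(\iota_a(g),\beta^\circ)$, where $\beta^\circ$ is the enclosing loop in $\Delta^\circ$. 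This is cleaner than a degeneration argument and is what actually makes the $\diamondsuit$-cycle check short.

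Second, the whole $\diamondsuit$-cycle computation in the paper is powered by a single identity, cited from \cite{FG06} (their Lemma~12.3):
\[
X_{\Delta_4}(g,\alpha)\,X_{\Delta_4}(\iota_a(g),\beta)=1.
\]
Once you have this, each of the four tagged flips $\Delta_1\leftrightarrow\Delta_4$, $\Delta_1\leftrightarrow\Delta_2$, etc., reduces to a one-line manipulation combining this identity with the ordinary flip formula of \cref{prop: X-coord}. Your proposal to ``compute these compositions explicitly'' is correct in spirit, but without isolating this identity the computation is longer and the role of $\iota_a$ is obscured; I would recommend proving (or citing) this relation first and then running your verification.
\endinput
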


Although the above proposition seems to be well-known to specialists, we could not find any proof in literature. Therefore we give a proof here for completeness.

\begin{dfn}[the coordinates associated with a tagged triangulation]\label{def; involution}
We already have the Fock-Goncharov coordinates $X_\Delta : \X(\pos) \to \mathbb{R}^{\Delta}_{>0}$ with respect to any ideal triangulation $\Delta$. We define a coordinate system for any tagged triangulations by the following conditions:
\begin{enumerate}
\item Suppose two tagged triangulations $\Delta_1$, $\Delta_2$ coincide except for the tags at a puncture $a$. Then we set
\[
X_{\Delta_1}(g, \alpha_1)=X_{\Delta_2}(\iota_a(g),\alpha_2)
\]
for all $g \in \X(\pos)$, where $\alpha_i \in \Delta_i$ $(i=1,2)$ are the corresponding arcs, $\iota_a$ is the involution on $\X(\pos)$ reversing the fiber parameter of the cover assigned to the puncture $a$.
\item If the tags of a tagged triangulation $\Delta$ are all plain, then we set $X_\Delta(g, \alpha):=X_{\Delta^{\circ}}(g, \alpha^{\circ})$ for all $g \in \X(\pos)$ and $\alpha \in \Delta$. The right-hand side is the Fock-Goncharov coordinate for the ideal triangulation $\Delta^{\circ}$.
\item If a tagged triangulation $\Delta$ have a punctured digon shown in the left of  \cref{Delta1}, then we set $X_\Delta(g, \gamma):=X_{\Delta^{\circ}}(g, \gamma^{\circ})$ for $\gamma \neq \alpha$, and respecting the rule $(1)$ we set
\[
X_\Delta(g,\alpha)=X_{\Delta'}(\iota_a(g),\alpha')=X_{\Delta^{\circ}}(\iota_a(g),\beta^{\circ}).
\]
\end{enumerate}
\end{dfn}
%%%%%%%%%%%%%%
\begin{figure}[h]
\unitlength 0.5mm
\[
\begin{tikzpicture}%Delta1
\fill (0,0) circle(2pt) coordinate(A);
\fill (0,2) circle(2pt) coordinate(B);
\fill (0,4) circle(2pt) coordinate(C);
\coordinate (D) at (-1,2);
\coordinate (E) at (1,2);
\draw (A) to[out=90, in=190] node[midway,left]{$\alpha$}(B) ;
\draw (A)--(B) node[midway,right]{$\beta$};
\draw (A) to[out=90, in=270] (D) to[out=90, in=270] (C);
\draw (A) to[out=90, in=270] (E) to[out=90, in=270] (C);
\path (B) node[below]{$\bowtie$};
\path (-1,2) node[left]{$\Delta$};
%Delta1'
\fill (5,0) circle(2pt) coordinate(A');
\fill (5,2) circle(2pt) coordinate(B');
\fill (5,4) circle(2pt) coordinate(C');
\coordinate (D') at (4,2);
\coordinate (E') at (6,2);
\draw (A') to[out=90, in=350] node[midway,right]{$\beta$}(B') ;
\draw (A')--(B') node[midway,left]{$\alpha$};
\draw (A') to[out=90, in=270] (D') to[out=90, in=270] (C');
\draw (A') to[out=90, in=270] (E') to[out=90, in=270] (C');
\path (B') node[below]{$\bowtie$};
\path (4,2) node[left]{$\Delta'$};
%Delta
\fill (10,0) circle(2pt) coordinate(A'');
\fill (10,2) circle(2pt) coordinate(B'');
\fill (10,4) circle(2pt) coordinate(C'');
\coordinate (D'') at (9,2);
\coordinate (E'') at (11,2);
\draw (A'')--(B'') node[midway,left]{$\alpha^{\circ}$};
\draw (A'') to[out=90, in=270] (D'') to[out=90, in=270] (C'');
\draw (A'') to[out=90, in=270] (E'') to[out=90, in=270] (C'');
\draw (A'') to[out=90, in=180] (10,2.5) to[out=0, in=90] node[midway,right]{$\beta^\circ$} (A'') ;
\path (9,2) node[left]{$\Delta^{\circ}$};
\end{tikzpicture}
\]
\caption{}
\label{Delta1}
\end{figure}
%%%%%%%%%%%%%%%%%%%

The following is the key lemma to ensure that the above definition is well-defined, which is essentially a special case of Lemma 12.3 in ~\cite{FG06}:

\begin{lem}[Fock-Goncharov ~\cite{FG06}]\label{lem; involution}
In the notation of \cref{fig; diamond-cycle}, we have
\[
X_{\Delta_4}(g,\alpha)X_{\Delta_4}(\iota_a(g),\beta)=1
\]
for all $g \in \hTF$.
\end{lem}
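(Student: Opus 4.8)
The plan is to reduce the claimed identity to a local cross-ratio computation in the once-punctured bigon cut out by $\alpha\cup\beta$, and then either to evaluate it by hand or to quote \cite{FG06}, Lemma~12.3. First I would localize: by construction $X_{\Delta_4}(-,\alpha)$ (resp.\ $X_{\Delta_4}(-,\beta)$), being the Fock--Goncharov coordinate of the underlying ideal triangulation at the arc $\alpha$ (resp.\ $\beta$), is the cross-ratio of the four ideal endpoints of the two ideal triangles adjacent to $\alpha$ (resp.\ $\beta$); see \cref{fig: X-coord}. In the configuration of \cref{fig; diamond-cycle} these two pairs of triangles together make up the once-punctured bigon $P$ bounded by the two outer arcs, with the puncture $a$ in its interior. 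Hence it suffices to prove the identity when $F=P$, i.e.\ after passing to the universal cover $\mathbb H$ and keeping track only of the ideal points of the lifts of the two triangles of $P$. In particular both coordinates depend only on the conjugacy class of the holonomy $h_a\in\PSL$ around $a$ and on the orientation datum at $a$, and $\iota_a$ preserves the former while reversing the latter.

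Next I would normalize and develop. Conjugating, one may assume $h_a\colon z\mapsto z+1$ (parabolic case) or $z\mapsto\lambda z$ with $\lambda>1$ (hyperbolic case), with $\infty$ the ideal fixed point selected by the orientation datum of $g$, so that $0$ is the one selected for $\iota_a(g)$. Developing the fan of lifted triangles of $P$ around $a$ produces ideal points $\dots,a_{-1},c_{-1},a_0,c_0,a_1,c_1,\dots$ on $\partial\mathbb H$ with $h_a(a_i)=a_{i+1}$ and $h_a(c_i)=c_{i+1}$, the $a_i$ and $c_i$ being the lifts of the two outer vertices; after one further normalization this leaves a single free real parameter. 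The lifts of $\alpha$ and $\beta$ are then the diagonals $(\infty,a_0)$ and $(\infty,c_0)$ of the quadrilaterals $\{\infty,c_0,a_0,c_{-1}\}$ and $\{\infty,a_1,c_0,a_0\}$, and a direct cross-ratio computation (using $c_{-1}=h_a^{-1}(c_0)$ and $a_1=h_a(a_0)$) gives $X_{\Delta_4}(g,\alpha)\cdot X_{\Delta_4}(g,\beta)=1$. This already settles the parabolic case, on the locus where $\iota_a$ fixes $g$ (in particular on $\TF\subset\hTF$).

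Finally, to handle $\iota_a$ in general, I would observe that passing from $g$ to $\iota_a(g)$ moves the $a$-endpoint of each plain arc at $a$ from $\infty$ to $0$ while fixing every other developed ideal point (these are determined by the common underlying representation); equivalently the developed picture is transported by the Möbius involution exchanging $0$ and $\infty$ and conjugating $h_a$ to $h_a^{-1}$, under which the coordinate $X_{\Delta_4}(-,\beta)$ of the $\iota_a(g)$-picture equals $X_{\Delta_4}(-,\alpha)^{-1}$ of the $g$-picture. This last fact is precisely \cite{FG06}, Lemma~12.3, applied to $P$; combined with the previous paragraph it yields $X_{\Delta_4}(g,\alpha)\cdot X_{\Delta_4}(\iota_a(g),\beta)=X_{\Delta_4}(g,\alpha)\cdot X_{\Delta_4}(g,\alpha)^{-1}=1$. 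The step I expect to be the main obstacle is exactly this one: pinning down how $\iota_a$ acts on the developed ideal configuration and matching the orientation and reciprocal conventions between the ideal-triangulation coordinates and the tagged ones; once those are fixed the identity is a one-line manipulation, and it is in any case immediate from \cite{FG06}, Lemma~12.3.
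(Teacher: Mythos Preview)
The paper does not supply its own proof of this lemma; it simply records it as a special case of \cite{FG06}, Lemma~12.3, and then uses it in the proof of \cref{X-extension}. So your plan already goes beyond what the paper does, and the overall strategy---localize to the once-punctured bigon, develop the fan of triangles around $a$, and compare cross-ratios---is exactly the right one.

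There is one genuine slip in the plan, however. Your second paragraph asserts that the cross-ratio computation yields $X_{\Delta_4}(g,\alpha)\cdot X_{\Delta_4}(g,\beta)=1$ in general, and only afterwards specializes to the parabolic locus. This product is \emph{not} equal to $1$ off the cusp locus: with $h_a(z)=\lambda z$ and the normalization $a_0=1$, $c_0=s$ one finds
\[
X_{\Delta_4}(g,\alpha)=\frac{\lambda(s-1)}{\lambda-s},\qquad X_{\Delta_4}(g,\beta)=\frac{\lambda-s}{s-1},
\]
so the product is $\lambda$, not $1$. (Conceptually: the product of the $X$-coordinates of all arcs incident to a puncture equals the holonomy eigenvalue there, and is $1$ precisely when the end is a cusp.) Thus the second paragraph only settles the parabolic case, and cannot be ``combined'' with the third to cover the hyperbolic case.

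Fortunately your third paragraph already contains the full argument: the identity $X_{\Delta_4}(\iota_a(g),\beta)=X_{\Delta_4}(g,\alpha)^{-1}$ is correct and by itself gives the lemma. Your heuristic for it (``transport by the M\"obius involution exchanging $0$ and $\infty$'') is not quite accurate, though: the developed configurations for $g$ and $\iota_a(g)$ share all the finite ideal points $a_i,c_i$ and differ only in whether the $a$-vertex sits at $\infty$ or at $0$, so they are not related by any single M\"obius map. The clean route is to write the quadrilateral for $\beta$ in the $\iota_a(g)$-picture---diagonal $(0,c_0)$, remaining vertices $a_0$ and $a_1=h_a(a_0)$---and compute directly; in the hyperbolic normalization above this yields $(\lambda-s)/(\lambda(s-1))=X_{\Delta_4}(g,\alpha)^{-1}$. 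This is the content of \cite{FG06}, Lemma~12.3, so citing it, as both you and the paper do, is also sufficient.
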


\begin{proof}[Proof of \cref{X-extension}]
We need to show that each coordinate transformation $X_{\Delta_i}\circ X_{\Delta_j}^{-1}$ in the $\diamondsuit$-cycle coincides with the cluster $\X$-transformation with respect to the exchange matrices associated with the tagged triangulations.

$\Delta_1 \overset{\mu_\beta}{\longleftrightarrow} \Delta_4$.
Note that the coordinate transformations of $X_\gamma$ ($\gamma \neq \alpha$) coincide with the cluster transformations by \cref{prop: X-coord}. In particular we have $X_{\Delta_1}(g,\beta)=X_{\Delta_4}(g,\beta)^{-1}$. Hence by \cref{lem; involution} we have
\[
X_{\Delta_1}(g,\alpha)=X_{\Delta_1^\circ}(\iota_a(g),\beta^\circ)=X_{\Delta_4^\circ}(\iota_a(g),\beta^\circ)^{-1}=X_{\Delta_4^\circ}(g,\alpha^\circ)=X_{\Delta_4}(g,\alpha).
\]

$\Delta_1 \overset{\mu_\alpha}{\longleftrightarrow} \Delta_2$.
Note that the coordinate transformations of $X_\gamma$ ($\gamma \neq \beta$) coincide with the corresponding cluster transformations, and we have
\[
X_{\Delta_2}(g,\beta)=X_{\Delta_4}(\iota_a(g),\alpha)=X_{\Delta_1}(\iota_a(g),\alpha)=X_{\Delta_1^\circ}(g,\beta^\circ)=X_{\Delta_1}(g,\beta).
\]
The remaining cases follow from a symmetric argument.
\end{proof}

The monomial morphism between the positive spaces of the seed $\bi_\Delta$ coincides with $p= \iota \circ \varpi: \wTF \to \hTF$ (see, for instance, ~\cite{Penner} Chapter 1, Corollary 4.16(c)). In particular, the $\U$-space is naturally identified with the \Teich space,\emph{i.e.}, $\U(\pos)\cong T(F)$.

The mapping class group naturally acts on the \Teich space $T(F)$ and $T(F)'$ via the Dehn-Nielsen embedding ~\cite{FM} $MC(F) \to \mathrm{Out}(\pi_1(F))$. These two actions extend to $\wTF$ and $\hTF$ by permuting the fiber parameters according to the action on the punctures.

\begin{prop}[Penner ~\cite{Penner} Chapter 2, Theorem 2.10]\label{action coincide}
For any ideal triangulation $\Delta$ of $F$ and a mapping class $\phi \in \MC$, the following diagrams commute: 
\begin{align*}
\xymatrix{
\wTF \ar[d]_{\phi} \ar[r]^{A_{\Delta}}  & \mathbb{R}^{\Delta}_{>0}  \ar[d]^{I(\phi)} \\
\wTF \ar[r]_{A_{\Delta}} & \mathbb{R}^{\Delta}_{>0}, \\
}
&
\xymatrix{
\hTF \ar[d]_{\phi} \ar[r]^{X_{\Delta}}  & \mathbb{R}^{\Delta}_{>0}  \ar[d]^{I(\phi)} \\
\hTF \ar[r]_{X_{\Delta}} & \mathbb{R}^{\Delta}_{>0}. \\
}
\end{align*}
In particular,  these natural actions coincide with the action as a subgroup of the cluster modular group associated with the seed $\bi_\Delta$. Compare with \cref{action}.
\end{prop}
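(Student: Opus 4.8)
The plan is to deduce the commutativity from the elementary equivariance of the $\lambda$-length and cross-ratio functions under the mapping class group, and then to match this with the combinatorial description of the homomorphism $I\colon MC(F)\to\Gamma_F$ recalled in the proof of \cref{MCG}.

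First I would record the geometric equivariance. If $\phi\in MC(F)$ and $e$ is an ideal arc, then straightening $e$ for the structure $\phi(\tilde g)$ and lifting to $\mathbb{D}$ amounts to straightening $\phi^{-1}(e)$ for $\tilde g$, while the horocyclic decoration of $\phi(\tilde g)$ is the decoration of $\tilde g$ permuted by the action of $\phi$ on the punctures; hence $A_e(\phi(\tilde g))=A_{\phi^{-1}(e)}(\tilde g)$ for all $\tilde g\in\wTF$. The same computation with cross-ratios of the relevant ideal quadrilaterals (the branch data at the punctures again being permuted by $\phi$) gives $X_e(\phi(g))=X_{\phi^{-1}(e)}(g)$ for all $g\in\hTF$. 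Rewriting these identities in the charts of \cref{decorated teich} and \cref{prop: X-coord}: if $r_\phi\colon\mathbb{R}^{\phi^{-1}(\Delta)}_{>0}\to\mathbb{R}^{\Delta}_{>0}$ denotes the relabelling $(a_f)_f\mapsto(a_{\phi^{-1}(e)})_e$ induced by the bijection $e\mapsto\phi^{-1}(e)$ of $\Delta$ onto $\phi^{-1}(\Delta)$, then $A_\Delta\circ\phi=r_\phi\circ A_{\phi^{-1}(\Delta)}$ and $X_\Delta\circ\phi=r_\phi\circ X_{\phi^{-1}(\Delta)}$.

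Next I would unwind $I(\phi)$. By its construction it is $\sigma\circ\mu_{i_k}\cdots\mu_{i_1}$, where $\mu_{i_1},\dots,\mu_{i_k}$ is a sequence of flips from $\Delta$ to $\phi^{-1}(\Delta)$ and $\sigma\colon\phi^{-1}(\Delta)\to\Delta$ is a seed isomorphism; since $\phi$ is an orientation-preserving homeomorphism it intertwines the exchange matrices of $\phi^{-1}(\Delta)$ and $\Delta$, so we may and do take $\sigma$ to be the map induced by $\phi$ on arcs, whence $\sigma^*=r_\phi$ in cluster coordinates. By \cref{decorated teich} (resp. \cref{prop: X-coord}) the flip sequence realizes the coordinate change, $A_{\phi^{-1}(\Delta)}=(\mu_{i_k}^a\cdots\mu_{i_1}^a)\circ A_\Delta$ (resp. $X_{\phi^{-1}(\Delta)}=(\mu_{i_k}^x\cdots\mu_{i_1}^x)\circ X_\Delta$), the transition maps being exactly the cluster transformations of \cref{fig: A-coord} and \cref{fig: X-coord}. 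Substituting into the two displays of the previous paragraph yields $A_\Delta\circ\phi=r_\phi\circ\mu_{i_k}^a\cdots\mu_{i_1}^a\circ A_\Delta$, which is precisely the action of $I(\phi)$ in the chart $A_\Delta$ in the sense of \eqref{action}, and likewise for $X$; these are the two asserted commuting squares. The apparent ambiguity in $I(\phi)$ (defined only modulo trivial cluster transformations) is harmless here, since trivial cluster transformations act as the identity on $\wTF$ and $\hTF$ by \cref{A-extension} and \cref{X-extension}. The final clause is then immediate: the action of $MC(F)\subset\Gamma_F$ on $\A(\pos)\cong\wTF$ and $\X(\pos)\cong\hTF$ is by definition the one described through charts by \eqref{action}, so the commuting squares say exactly that it coincides with the geometric action. (For the full tagged group $MC^{\bowtie}(F)$ one supplements this with the observation, used in the proof of \cref{prop: Teich are Teich}, that $\epsilon\in\{\pm1\}^p$ acts through the involutions $\iota_a$ of \cref{def; involution}; for $\phi\in MC(F)$ no tags move and the computation above suffices.)

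\textbf{Main difficulty.} The main obstacle I anticipate is purely bookkeeping: keeping the variances consistent (when to write $\phi$ versus $\phi^{-1}$, and in which argument) across the geometric equivariance, the relabelling $r_\phi$, the flip composition, and the conventions of \eqref{action}; in particular, justifying cleanly that the abstract seed isomorphism $\sigma$ occurring in $I(\phi)$ can be taken to be the one induced by $\phi$ on arcs, so that $\sigma^*$ is literally $r_\phi$.
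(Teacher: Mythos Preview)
The paper does not supply a proof of this proposition: it is stated with the citation to Penner's book and no \texttt{proof} environment follows. So there is nothing in the paper to compare against, and your write-up is in fact \emph{more} than what the paper provides.

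That said, your argument is the standard one and is correct. The two ingredients are exactly right: (i) the naturality identities $A_e(\phi\cdot\tilde g)=A_{\phi^{-1}(e)}(\tilde g)$ and $X_e(\phi\cdot g)=X_{\phi^{-1}(e)}(g)$, which follow from the fact that $\lambda$-lengths and cross-ratios depend only on the hyperbolic/decoration data and the isotopy class of the arc; and (ii) the description of $I(\phi)$ from \cref{MCG} as a flip sequence $\Delta\to\phi^{-1}(\Delta)$ followed by the seed isomorphism induced by $\phi$, combined with \cref{decorated teich} and \cref{prop: X-coord} identifying flip transition maps with cluster transformations. Your handling of the seed isomorphism is also fine: since $\phi$ is an orientation-preserving diffeomorphism it carries the signed adjacency data of triangles bijectively, hence intertwines the exchange matrices, and the seed isomorphism in $I(\phi)$ is by construction unique up to trivial cluster transformations, so taking it to be the one induced by $\phi$ is legitimate. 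The bookkeeping worry you flag is real but you have resolved it consistently with the convention in \eqref{action}.
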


\subsection{Tropical spaces and the lamination spaces}

Next we describe the tropical spaces geometrically, following ~\cite{FG07}. As before, we focus on the case of empty boundary $b=0$.

\begin{dfn}
A \emph{decorated rational (bounded) lamination} on $F$ is an isotopy class of a disjoint union of simple closed curves in $F$ with rational numbers (called weights) assigned to each curve so that the weight is positive unless the corresponding curve is peripheral. Each curve is called a \emph{leaf} of the lamination.
\end{dfn}
We denote a decorated rational lamination by $L=\bigsqcup w_j\gamma_{j}$, and denote the set of decorated rational laminations by $\wLQ$.
Let $\LQ$ denote the set of decorated rational laminations with no peripheral leaves. 
There is a canonical projection $\varpi : \wLQ \to \LQ$ forgetting the peripheral leaves. 
Following ~\cite{FG07}, we associate a rational number with an ideal arc $e$.
For a decorated rational lamination $L=\bigsqcup w_j\gamma_{j}$, isotope each curve $\gamma_{j}$ so that the intersection with $e$ is minimal. Then define $a_e(L):=\sum_{j} w_{j} \#(\gamma_{j}\cap e)$.

\begin{prop}[Fock-Goncharov ~\cite{FG07} Section 3.2]
For any ideal triangulation $\Delta$ of $F$, the map
\begin{equation}
a_{\Delta} : \wLQ \to \mathbb{Q}^{\Delta} ; L \mapsto \{ a_e(L) \}_{e \in \Delta} \nonumber
\end{equation}
gives a bijection.
\end{prop}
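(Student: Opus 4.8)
The map $a_\Delta$ is well defined (it is additive under disjoint union with weights and unchanged under isotopy), so the content is that it is a bijection. The plan is to prove this classical statement of Fock--Goncharov by the \emph{normal form} method, encoding a decorated rational lamination by its combinatorics inside the triangles of $\Delta$; throughout we are in the case $b=0$, so every member of $\Delta$ is an interior ideal arc. First I would recall the reduction to normal form: every decorated rational lamination $L=\bigsqcup_j w_j\gamma_j$ can be isotoped into \emph{normal position} with respect to $\Delta$, meaning each leaf realizes its geometric intersection number with every arc; then $L$ meets each ideal triangle $T$ of $\Delta$ in a disjoint union of \emph{corner arcs}, i.e.\ embedded arcs each joining the two sides of $T$ adjacent to one corner of $T$ (an arc with both ends on one side, or a closed leaf inside $T$, bounds a disk and is removable by isotopy). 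For each corner $v$ of each triangle $T$ let $n^T_v(L)\in\mathbb{Q}_{\ge 0}$ be the total weight of the corner arcs cutting off that corner; these are the \emph{corner coordinates}. Since inside $T$ a leaf crossing one side must cross exactly one of the other two sides, the number of strands of $L$ across side $e_i$ of $T$ is $n^T_{v_j}(L)+n^T_{v_k}(L)$, where $v_j,v_k$ are the corners of $T$ incident to $e_i$, and because $b=0$ this equals $a_{e_i}(L)$. Solving the resulting $3\times 3$ system gives $n^T_v(L)=\tfrac12\bigl(a_{e_j}(L)+a_{e_k}(L)-a_{e_i}(L)\bigr)$, with $e_i$ the side of $T$ opposite $v$.

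For \emph{injectivity}: if $a_\Delta(L)=a_\Delta(L')$ for normal-form laminations $L,L'$, the inversion formula shows all corner coordinates of $L$ and $L'$ agree in every triangle. A normal-form lamination is reconstructed up to isotopy, with its weights, from its corner coordinates: inside each triangle one draws the prescribed weighted union of corner arcs, and across each interior edge $e$ the $a_e$ strands arriving from the two adjacent triangles match in number and, being embedded and disjoint, are glued in an essentially unique planar way. Hence $L=L'$ in $\wLQ$.

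For \emph{surjectivity}: given $(m_e)_{e\in\Delta}\in\mathbb{Q}^\Delta$, form the tentative corner coordinates $n^T_v:=\tfrac12(m_{e_j}+m_{e_k}-m_{e_i})$. The only obstruction is that some $n^T_v$ may be negative, and this is cured using peripheral leaves, whose weights are unrestricted. The bookkeeping identity is that adding a peripheral loop of weight $w$ around a puncture $v$ replaces each $m_e$ by $m_e+w\,k_{e,v}$, where $k_{e,v}\in\{0,1,2\}$ is the number of ends of $e$ at $v$, and a short computation shows this shifts every corner coordinate at $v$ by $w$ and leaves all others unchanged. So put $w_v:=\min_{T\ni v}n^T_v$ and $m^0_e:=m_e-\sum_v w_v k_{e,v}$; then all corner coordinates computed from $(m^0_e)$ are nonnegative, and since the minimum over triangles at each $v$ now equals $0$ the normal-form lamination $L_0$ they define has no peripheral leaves. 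Finally $L:=L_0\sqcup\bigsqcup_v w_v\gamma_v$, where $\gamma_v$ is the peripheral loop at $v$ (weight $w_v\in\mathbb{Q}$, possibly $\le 0$), lies in $\wLQ$ and satisfies $a_\Delta(L)=(m_e)$.

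The main obstacle is not an estimate but the combinatorial care needed to make the normal form and the identity $a_{e_i}(L)=n^T_{v_j}(L)+n^T_{v_k}(L)$ precise, especially when $\Delta$ contains a self-folded triangle (so an arc bounds a once-punctured monogon and is counted twice as a side of a single triangle) or, more generally, a triangle glued to itself along an edge; there the dictionary between edge intersection numbers and corner coordinates must be adjusted as in Fock--Goncharov. I would bypass this by proving the statement first for an ideal triangulation of $F$ with no self-folded triangle---which always exists when $b=0$ (there are none at all when $p=1$, and one constructs one directly when $p\ge 2$)---and then deducing the general case from \cref{Whitehead}: $\wLQ$ itself does not depend on $\Delta$, and the change-of-coordinate map relating $a_\Delta$ to $a_{\Delta'}$ is the tropical limit of the Ptolemy relation (equivalently the tropicalized cluster $\A$-mutation), a piecewise-linear bijection of the coordinate space, so bijectivity for one triangulation forces it for all.
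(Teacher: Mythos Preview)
The paper does not supply its own proof of this proposition; it is stated with a citation to Fock--Goncharov \cite{FG07}, Section~3.2, and no argument is given. Your sketch via normal form and corner coordinates is precisely the classical argument found there (and in the train-track literature), and it is correct in outline. Two minor remarks. First, your claim that a once-punctured closed surface admits no self-folded triangle is true, since the enclosed puncture of a self-folded monogon must be distinct from the vertex of the bounding loop. Second, the only place requiring extra care is the assertion that adding a peripheral loop at $v$ shifts every corner coordinate at $v$ by $w$ and leaves the others unchanged: this is geometrically clear (the peripheral loop in normal position passes once through each corner at $v$), but the algebraic formula $n^T_v=\tfrac12(m_{e_j}+m_{e_k}-m_{e_i})$ you wrote only verifies it when the three vertices of $T$ are distinct punctures; for triangles with repeated vertices one should invoke the geometric picture rather than that formula. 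Your reduction to a self-folding-free triangulation via \cref{Whitehead} and the tropical Ptolemy relation is clean, and the paper records exactly this flip-equivariance immediately after the proposition.
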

For a flip along $e \in \Delta$, the corresponding change of the above coordinates coincide with the tropical cluster $\A$-transformation. Thus we call $a_\Delta$ the tropical $\A$-coordinate associated with $\Delta$.
Since the tropical cluster $\A$-transformation is continuous with respect to the standard topology on $\mathbb{Q}^N$we can define the \emph{real decorated lamination space} $\wLR$ as the completion of $\wLQ$ with respect to the topology induced by the tropical $\A$-coordinates. Similarly define $\LR$ as the completion of $\LQ$. Then we have a homeomorphism $a_\Delta: \wLR \to \mathbb{R}^\Delta$ for each ideal triangulation $\Delta$.
For each tagged arc we can extend the definition of the tropical $\A$-coordinate using the \emph{conjugate peripheral curves}, in analogy with the conjugate horocycles. Here two weighted peripheral curves on $F$ are called \emph{conjugate} if the sum of weights is $0$. Again, changing the tags amounts to changing the weighted peripheral curves by the conjugate one.
\begin{prop}[Fomin-Thurston \cite{FoT}]
The tropical space of the positive space $\psi_{\A}^{\bowtie}: \mathcal{M}^{\bowtie}(F) \to \Pos(\mathbb{R})$ given in \cref{A-extension} is naturally identified with the real decorated lamination space $\wLR$, \emph{i.e.}, $\A(\trop) \cong \wLR$.
\end{prop}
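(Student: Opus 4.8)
The plan is to exhibit the tropical $\A$-coordinates $a_\Delta$, extended to all tagged triangulations, as the defining charts of the PL-manifold $\A(\trop)$, and to read the identification $\A(\trop)\cong\wLR$ off from the fact that these are already coordinate systems on $\wLR$. Recall that $\A(\trop)$ is by construction obtained by gluing one copy of $\mathbb{R}^\Delta$ for each object $\Delta$ of the tagged modular groupoid $\mathcal{M}^{\bowtie}(F)$ along the tropicalizations $\Trop\bigl(\psi_{\A}^{\bowtie}(\mu)\bigr)$ of the transition maps. On the lamination side, for an ordinary ideal triangulation $\Delta$ the Fock-Goncharov coordinate $a_\Delta\colon\wLR\to\mathbb{R}^\Delta$ is already known to be a homeomorphism whose flip transitions are exactly the tropical cluster $\A$-transformations of \cref{fig: A-coord}; for a general tagged triangulation I would extend $a_\Delta$ using \emph{conjugate peripheral curves}, in verbatim analogy with the extension of the $\A$-coordinates $X_\Delta$ in \cref{A-extension} and \cref{def; involution}. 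Concretely, changing the tags at a puncture $a$ corresponds to the involution $\iota'_a$ on $\wLR$ that replaces the weighted peripheral leaf around $a$ by its conjugate (negating that weight), and one defines $a_\Delta$ on tagged triangulations by the rules of \cref{def; involution} with $\iota_a$ replaced throughout by $\iota'_a$.

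The first step is to check that for an ordinary flip between two plain-tagged triangulations the transition map of $\psi_{\A}^{\bowtie}$ is the Penner transition map of \cref{decorated teich}, whose tropicalization is the tropical cluster $\A$-transformation; this is the Fock-Goncharov statement restated, and it yields compatibility of the family $(a_\Delta)_\Delta$ with the gluing data over the ordinary modular subgroupoid $\mathcal{M}(F)$. The second, and only substantive, step is to handle the tagged flips occurring in the $\diamondsuit$-cycle of \cref{fig; diamond-cycle}, imitating the proof of \cref{X-extension}. For the arcs $\gamma\neq\alpha$ the transition is the plain one; the only new ingredient is the coordinate of the arc $\alpha$, defined through $\iota'_a$, for which one needs the tropical analogue of \cref{lem; involution}, namely the conjugate-peripheral-curve identity
\[
a_{\Delta_4}(L;\alpha)+a_{\Delta_4}(\iota'_a(L);\beta)=0\qquad\text{for all }L\in\wLR.
\]
This identity both ensures that the extended coordinates are well defined on tagged triangulations and, substituted into the tropicalization of the chain of equalities from the proof of \cref{X-extension}, gives the agreements $a_{\Delta_1}(\,\cdot\,;\alpha)=a_{\Delta_4}(\,\cdot\,;\alpha)$ and $a_{\Delta_2}(\,\cdot\,;\beta)=a_{\Delta_1}(\,\cdot\,;\beta)$, which is exactly the compatibility of the charts around the $\diamondsuit$-cycle. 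Since $\wLR$ is the completion of $\wLQ$ in the topology induced by the coordinates $a_\Delta$ and each $a_\Delta$ is a homeomorphism onto $\mathbb{R}^\Delta$, the induced map $\wLR\to\A(\trop)$ is a homeomorphism.

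It remains to record the $\Gamma_F$-equivariance. The group $\Gamma_F\cong MC^{\bowtie}(F)$ acts on $\wLR$ by relabelling leaves and acting on the peripheral weights, and on $\A(\trop)$ through the charts exactly as in \cref{action}; both actions are governed by the same combinatorics on the coordinates $a_\Delta$ — the tropicalization of \cref{action coincide} together with the description of the tag-changing involutions above — so the identification is equivariant.

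The step I expect to be the main obstacle is establishing the tropical conjugate-peripheral-curve identity at the $\diamondsuit$-cycle, together with the bookkeeping needed to match the weight-negating involution $\iota'_a$ on $\wLR$ with a tag change and to read the decoration of a peripheral leaf off the tropical $\A$-coordinate. This is essentially contained in Fomin-Thurston's treatment of conjugate peripheral curves; once it is in place, the remainder is a routine transcription of the proof of \cref{X-extension} into the tropical $\A$-setting, with multiplicative relations replaced by additive ones.
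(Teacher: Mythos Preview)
Your approach is correct and coincides with what the paper does: the paper does not give an independent proof of this proposition but cites Fomin--Thurston, merely recording that the tropical $\A$-coordinates $a_\Delta$ on ordinary triangulations transform by tropical cluster $\A$-transformations under flips, and that the extension to tagged triangulations is made via \emph{conjugate peripheral curves} (two weighted peripheral curves being conjugate when their weights sum to zero), in direct analogy with the conjugate-horocycle extension of \cref{A-extension}. Your $\diamondsuit$-cycle verification, modelled on the proof of \cref{X-extension}, is exactly the missing detail that the paper leaves implicit and defers to~\cite{FoT}; the paper itself carries out that computation only on the $\X$-side.
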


Although the geometric meaning of irrational points in $\wLR$ is not so clear from the above definition, we have the following result.

\begin{thm}[for instance, ~\cite{PH}]
There are natural PL homeomorphisms 
\begin{align*}
\LR \cong \ML&\ \text{and} \  \wLR \cong \wML, 
\end{align*}
where $\ML:=\mathcal{ML}_0(F) \cup \{\emptyset\}$ is the space of measured geodesic laminations with compact supports attached with the empty lamination, and $\wML:= \ML \times \mathbb{R}^s$ is a trivial bundle. Moreover, the bundle projection $\wML \to \ML$ coincides with the projection $\varpi: \wLR \to \LR$.
\end{thm}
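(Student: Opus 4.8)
The plan is to identify both sides with the same PL manifold by checking that, on rational points, the tropical $\A$-coordinates $a_\Delta$ are nothing but the classical geometric intersection-number coordinates on measured laminations, and then to invoke the standard train-track theory (Thurston; see~\cite{PH}) on the lamination side. First I would fix an ideal triangulation $\Delta$ and recall the classical fact that a compactly supported measured geodesic lamination with rational transverse measure is precisely a weighted disjoint union of essential simple closed curves with positive rational weights, and conversely; adjoining independent peripheral leaves with arbitrary rational weights around each of the $p$ punctures then gives exactly $\wLQ$. Thus $\wLQ$ is the set of rational points of $\wML=\ML\times\mathbb{R}^p$ and $\LQ$ the set of rational points of $\ML$. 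For such a rational lamination $L=\bigsqcup w_j\gamma_j$ the defining formula $a_e(L)=\sum_j w_j\,\#(\gamma_j\cap e)$ is exactly the geometric intersection number $i(L,e)$ with the arc $e$, so the bijection $a_\Delta\colon\wLQ\to\mathbb{Q}^\Delta$ coincides with the restriction to rational points of the intersection pairing $I_\Delta\colon G\mapsto (i(G,e))_{e\in\Delta}$.

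Next I would invoke the classical statement (Thurston, \cite{PH}) that $I_\Delta$ extends to a PL homeomorphism $\wML\xrightarrow{\sim}\mathbb{R}^\Delta$: the edges of $\Delta$ are the weight coordinates of a complete train track carrying every compactly supported lamination, $I_\Delta$ is linear on each of the finitely many cells cut out by sign conditions, these cells fit together under flips, and the peripheral weights (taken with signs) account for the residual $\mathbb{R}^p$ of freedom, so that the image is all of $\mathbb{R}^\Delta$. Now $\wLR$ is \emph{by construction} $\mathbb{R}^\Delta$ equipped with the atlas $\{a_\Delta\}$ whose transition maps under a flip are the tropical cluster $\A$-transformations; and the change of intersection-number coordinates under a flip is governed by the very same tropical $\A$-transformations (Fock--Goncharov, already used to define $a_\Delta$, and valid on tagged triangulations too). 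Hence the composite $\wML\xrightarrow{I_\Delta}\mathbb{R}^\Delta\xrightarrow{a_\Delta^{-1}}\wLR$ is independent of $\Delta$, is a bijection agreeing with the identity $\wLQ=\wML_{\mathbb{Q}}$ on the dense set of rational points, is continuous on both sides, and is linear (hence PL) in each chart; therefore it is a PL homeomorphism $\wML\cong\wLR$.

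For the non-decorated statement I would restrict this homeomorphism to the sub-PL-manifold cut out by "no peripheral leaves'', which on the $\wML$ side is the image of the zero section of $\wML\to\ML$ together with the empty lamination, and on the $\wLR$ side is the completion of $\LQ$; this gives $\ML\cong\LR$. Finally, the asserted compatibility of $\varpi$ with the two projections $\wML\to\ML$ and $\wLR\to\LR$ holds on the dense set of rational points (forgetting peripheral leaves corresponds to forgetting the $\mathbb{R}^p$-factor) and hence everywhere by continuity.

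The main obstacle, and the only point not covered by a direct citation, is the decorated/peripheral bookkeeping: reconciling Fock--Goncharov's signed peripheral weights and the "conjugate peripheral curve'' operation (used to define the tagged tropical $\A$-coordinates) with the honest $\mathbb{R}^p$-factor of $\wML$ and with Penner's decorated-lamination formalism, and in particular verifying that $I_\Delta$ is surjective onto all of $\mathbb{R}^\Delta$ rather than onto a proper subcone. A clean way to handle this is to note that $\varpi\colon\LQ\to\wLQ$ admits a canonical section, that the fiber $\mathbb{Q}^p$ is precisely the set of choices of peripheral weights, and that the tropical $\A$-transformations act on this fiber affinely; passing to completions then identifies the bundle $\varpi\colon\wLR\to\LR$ with $\wML\to\ML$ directly and forces surjectivity of $I_\Delta$.
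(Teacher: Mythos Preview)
The paper does not give its own proof of this theorem: it is stated with the attribution ``for instance, \cite{PH}'' and no argument follows. So there is nothing to compare your proposal against on the paper's side beyond the bare citation to Penner--Harer's train-track book.

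Your outline is the standard one and is essentially correct. A couple of small points are worth tightening. First, the edges of an ideal triangulation are not themselves a train track; rather, from $\Delta$ one builds a train track (e.g.\ the dual spine with appropriate smoothings in each triangle) whose branch weights are linear in the $a_e$, and it is this train track to which the theory of \cite{PH} applies. Second, a train track in the usual sense does not carry peripheral curves, so the $\mathbb{R}^p$ factor must be put in by hand exactly as you say: a peripheral loop around a puncture $a$ has minimal intersection $1$ with each arc ending once at $a$ and $0$ with arcs not incident to $a$, so the peripheral weights shift the $a_\Delta$-coordinates affinely and supply precisely the missing $\mathbb{R}^p$. Once these two points are made explicit, your density-plus-continuity argument goes through, and the compatibility with $\varpi$ is immediate.
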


\begin{dfn}
A \emph{rational unbounded lamination} consists of the following data:
\begin{enumerate}
\item an isotopy class of a disjoint union of simple closed curves and ideal arcs $\{\gamma_j\}_j$ in $F$ with positive rational weights $\{w_j\}$ assigned to each curve.
\item a tuple of orientations on each puncture to which some curves incident.
\end{enumerate}
We denote these data by $L=(\bigsqcup w_j \gamma_j, \pm)$.
\end{dfn}
Denote the set of rational unbounded laminations by $\hLQ$. We have a natural embedding $\iota: \LQ \hookrightarrow \hLQ$, where the orientation data is unnecessary since the leaves of a bounded lamination are not incident to any punctures.

\begin{prop}[Fock-Goncharov ~\cite{FG07} Section 3.1]
For any ideal triangulation $\Delta$ of $F$, there exists a natural bijection
\begin{equation}
x_{\Delta} : \hLQ \to \mathbb{Q}^{\Delta} \nonumber.
\end{equation}

\end{prop}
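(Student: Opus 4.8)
The plan is to follow \cite{FG07}: define the coordinate $x_\Delta$ by an explicit signed count of intersections with $\Delta$, check that it depends only on the isotopy class, and then invert it by reconstructing a lamination triangle by triangle, much as one does for the $\A$-coordinate $a_\Delta$ in the previous proposition but keeping track of signs and of the orientation data at the punctures.

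First I would put a rational unbounded lamination $L=(\bigsqcup w_j\gamma_j,\pm)$ into \emph{normal form} with respect to $\Delta$: isotope the leaves so that each leaf meets every triangle in a disjoint union of embedded arcs joining two distinct sides, and so that each leaf which is an ideal arc enters a puncture $p$ by spiralling in the direction prescribed by the chosen orientation at $p$; such a form exists and is unique up to isotopy by the usual taut-position argument. For an edge $e\in\Delta$ let $Q_e=T_1\cup_e T_2$ be the union of the two triangles adjacent to $e$, a quadrilateral whose four sides acquire a cyclic order from the orientation of $F$. A leaf segment running through $Q_e$ and crossing $e$ joins a side of $T_1$ to a side of $T_2$, and there are only two isotopy types of such segments; moreover at most one of the two types can occur in a given $Q_e$, since two segments of opposite types would necessarily intersect. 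Declaring one type positive and the other negative, and letting a leaf that spirals into an endpoint of $e$ contribute $\pm w_j$ according to the orientation there and to which endpoint it is, I define $x_\Delta(e;L)$ to be the resulting finite rational sum. Uniqueness of the normal form makes this a well-defined map $x_\Delta\colon\hLQ\to\mathbb{Q}^\Delta$.

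Next I would establish bijectivity by exhibiting the inverse. After rescaling it suffices to invert $x_\Delta$ on $\mathbb{Z}^\Delta$. Inside each triangle $T$ a normal-form multicurve is encoded by the three nonnegative integers counting arcs joining each pair of sides, together with, at each corner that is a puncture, the number and direction of the strands spiralling in; the edge coordinate $x_\Delta(e;-)$ is an explicit integral linear combination of these ``corner'' data read off from the two triangles bordering $e$. I would then check that the global linear system obtained by letting $e$ range over $\Delta$ has, for every prescribed $(x_e)\in\mathbb{Z}^\Delta$, a unique solution satisfying the nonnegativity constraints, and that the triangle pieces determined by such a solution patch together along the edges into a genuine unbounded lamination whose orientations at the punctures are forced by the signs of the corner data. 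This yields both injectivity and surjectivity. Finally I would note, by comparison with \cref{fig: X-coord}, that under a flip $\Delta\to\Delta'$ the transition $x_{\Delta'}\circ x_\Delta^{-1}$ is the tropical cluster $\X$-transformation, so that the bijection is compatible across ideal triangulations and (as in \cref{def; involution}) across tagged triangulations.

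The main obstacle is the reconstruction step: at each edge $e$ one must disentangle the three kinds of contributions to $x_\Delta(e;-)$ — segments crossing $e$ in the two senses, strands turning at an endpoint of $e$ inside one adjacent triangle, and ideal-arc ends spiralling into an endpoint of $e$ — and show that the resulting system of relations, together with the positivity constraints, pins down the lamination and its puncture orientations uniquely. The bookkeeping around the punctures (case (c) in the compatibility discussion, and the $\diamondsuit$-cycle) is the delicate part; the rest is the standard normal-position theory of laminations on surfaces.
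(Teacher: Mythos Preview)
The paper does not give its own proof of this proposition: it is stated as a citation to Fock--Goncharov \cite{FG07}, Section~3.1, and no argument is supplied. Your sketch is a faithful reconstruction of the strategy in that reference --- put the lamination in normal position relative to $\Delta$, read off a signed ``shear'' count at each edge, and invert triangle by triangle --- so there is nothing to compare against beyond the citation itself.

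One small caution on the content of your sketch: in the Fock--Goncharov construction the $\X$-coordinate at an edge $e$ is most cleanly defined by first retracting the lamination onto a neighbourhood of the spine dual to $\Delta$ and then counting, with sign, the strands crossing the dual edge according to which way they turn. Your formulation via the quadrilateral $Q_e$ is equivalent, but the claim that ``a leaf that spirals into an endpoint of $e$ contributes $\pm w_j$'' needs care: an ideal arc spiralling into a puncture $p$ contributes to \emph{every} edge incident to $p$, not just to $e$, and the sign pattern around $p$ is what encodes the orientation datum. Getting that contribution right (so that the orientation at $p$ is genuinely recoverable in the inverse map) is, as you correctly flag, the delicate bookkeeping step.
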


For a flip along $e \in \Delta$, the corresponding change of the above coordinates coincide with the tropical cluster $\X$-transformation.
By the continuity of the tropical cluster $\X$-transformations, we can define the real unbounded lamination space $\hLR$ as the completion of $\hLQ$. 

\begin{prop}[Fomin-Thurston \cite{FoT} Theorem 13.6]
The coordinate functor defined above naturally extends to the tagged modular groupoid $\mathcal{M}^{\bowtie}(F)$, and the tropical space of the positive space $\psi_{\X}: \mathcal{M}(F) \to \Pos(\mathbb{R})$ given in \cref{decorated teich} is naturally identified with the real decorated lamination space $\hLR$, \emph{i.e.}, $\X(\trop) \cong \hLR$.
\end{prop}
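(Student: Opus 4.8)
The plan is to obtain this statement as the tropical shadow of the identification $\X(\pos)\cong\hTF$ established in \cref{X-extension}, following the very same bookkeeping. First I would recall that, by definition, $\X(\trop)$ is glued from the euclidean charts $\mathbb{R}^{\Delta}$ — one for each seed in $|\bi_\Delta|$, that is, one for each tagged triangulation $\Delta$ — along the tropical limits of the cluster $\X$-transformations, whereas $\hLR$ is by construction the completion of $\hLQ$ in the topology induced by the coordinates $x_\Delta$. For \emph{ordinary} flips between ideal triangulations the change of the coordinates $x_\Delta$ is exactly the tropical cluster $\X$-transformation (recorded just before the statement), so the charts $x_\Delta$ attached to ideal triangulations already exhibit a canonical PL homeomorphism between $\hLR$ and the union of the coordinate patches of $\X(\trop)$ indexed by ideal triangulations.

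The substantive point is then to extend $x_\Delta$ to all tagged triangulations and to verify consistency around the $\diamondsuit$-cycle of \cref{fig; diamond-cycle}, so that the charts glue over all of $\X(\trop)$. For this I would tropicalize \cref{def; involution}: changing the tags at a puncture $a$ is implemented by the involution $\iota_a$ reversing the orientation datum at $a$, which geometrically replaces the weighted peripheral curve at $a$ by its conjugate — two such curves being conjugate when their weights sum to $0$, the additive shadow of ``product of horocycle lengths equal to $1$''. Under this dictionary the tropical analogue of \cref{lem; involution} reads $x_{\Delta_4}(G,\alpha)+x_{\Delta_4}(\iota_a(G),\beta)=0$ for all $G\in\hLR$, and one gets it either by a direct computation from the definition of $x_\Delta$ on $\hLQ$ or, more cheaply, as the $\Trop$-limit of \cref{lem; involution} itself. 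Granting it, the four local checks $\Delta_1\leftrightarrow\Delta_4$, $\Delta_1\leftrightarrow\Delta_2$, $\Delta_2\leftrightarrow\Delta_3$, $\Delta_3\leftrightarrow\Delta_4$ run verbatim as in the proof of \cref{X-extension}, with every multiplicative relation replaced by its additive counterpart; gluing the extended charts then yields the $\Gamma_{F}$-equivariant PL homeomorphism $\hLR\cong\X(\trop)$, and the extension of the coordinate functor to $\mathcal{M}^{\bowtie}(F)$ is nothing but the record of these transition maps.

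The hard part, such as it is, will be the bookkeeping with the orientation data at the punctures: one must make sure that the $2^{p}$-fold structure implicit in the phrase ``a tuple of orientations on each puncture to which some curves are incident'' matches, chart by chart, the branched-cover structure of $\hTF$ after tropicalization, so that the involutions $\iota_a$ on $\hLR$ correspond to the ones on $\X(\trop)$ used in the proof of \cref{X-extension}. Since the statement is due to Fomin--Thurston \cite{FoT}, I would in any case cite that source for the full details and present the tropicalization argument above only as the conceptual route, exactly as \cref{X-extension} was handled in the $\mathbb{R}_{>0}$ case.
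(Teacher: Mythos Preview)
Your proposal is correct and follows exactly the route the paper indicates: the paper's entire treatment of this proposition is the single sentence ``The extension is in the same manner as the one described in \cref{X-extension},'' and you have simply spelled out what that sentence means---tropicalize \cref{def; involution} and \cref{lem; involution}, then rerun the $\diamondsuit$-cycle check additively. If anything, you have given more detail than the paper does.
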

The extension is in the same manner as the one described in \cref{X-extension}.

\bibliographystyle{amsalpha}
\bibliography{reference}

\end{document}